\date{\today}
\newtheorem{theorem}{Theorem}[section]
\newtheorem{proposition}[theorem]{Proposition}
\newtheorem{lemma}[theorem]{Lemma}
\newtheorem{remark}[theorem]{Remark}
\def\be#1 {\begin{equation} \label{#1}}
\newcommand{\ee}{\end{equation}}
\def\sqw{\hbox{\rlap{\leavevmode\raise.3ex\hbox{$\sqcap$}}$%
\sqcup$}}
\def\findem{\ifmmode\sqw\else{\ifhmode\unskip\fi\nobreak\hfil
\penalty50\hskip1em\null\nobreak\hfil\sqw
\parfillskip=0pt\finalhyphendemerits=0\endgraf}\fi}
\newcommand{\R}{{\mathbb {R}}}
\newcommand{\N}{{\mathbb N}}
\newcommand{\Z}{{\mathbb Z}}
\newcommand{\C}{{\mathbb C}}
\newcommand{\K}{\mathcal K}
\newcommand{\m}{\mathfrak m}
\newcommand{\g}{\mathfrak g}
\newcommand{\G}{\mathcal G}
\newcommand{\B}{\mathcal B}
\newcommand{\supp}{\operatorname{supp}}
  \author[Choudhary, Jotsaroop, Shrivastava, Shuin]{Surjeet Singh Choudhary, K. Jotsaroop, Saurabh Shrivastava, Kalachand Shuin}
  \address[Jotsaroop Kaur]{
  	Department of Mathematics\\
  	Indian Institute Science Education and Research\\
  	Mohali, India}
  \email{jotsaroop@iisermohali.ac.in}
  \address[Kalachand Shuin]{Department of Mathematical Sciences, Seoul National University, Seoul 08826, Republic of Korea}
  \email{kcshuin21@snu.ac.kr}
\address [Surjeet Singh Choudhary, Saurabh Shrivastava]{
	Department of Mathematics\\
	Indian Institute Science Education and Research Bhopal\\
	Bhopal-462066, India}
\email{surjeet19@iiserb.ac.in, saurabhk@iiserb.ac.in}
\keywords{Stein's square function, Bochner-Riesz means, Bilinear multipliers,  Sparse operators, Maximal functions}
\subjclass[2010]{Primary 42A85, 42B15, 42B25}
\begin{document}
\title[Bilinear Bochner-Riesz square function and applications]{Bilinear Bochner-Riesz square function and applications}
	\begin{abstract} In this paper we introduce Stein's square function associated with bilinear Bochner-Riesz means and investigate its $L^p$ boundedness properties.  Further, we discuss several applications of the square function in the context of bilinear multipliers. In particular, we obtain results for maximal function associated with generalised bilinear Bochner-Riesz means. This extends the results proved in~\cite{JS}. Another application concerns the $L^p$ estimates for bilinear fractional Schr\"{o}dinger multipliers. Finally, we improve upon a result of Grafakos, He and Honzik~\cite{GHH} in the context of bilinear radial multipliers and provide a dimension free sufficient condition on the bilinear multipliers for $L^2\times L^2\rightarrow L^1$ boundedness of the associated maximal function. The generalised bilinear spherical maximal function is a particular example of such maximal functions. 
	\end{abstract}	
	\maketitle
%	\tableofcontents
\section{Introduction}\label{sec:intro}

\subsection{Stein's square function}
The square function associated with Bochner-Riesz means was introduced by Stein in~\cite{St}. It is commonly referred to as Stein's square function and is defined by 
\begin{eqnarray*}
	G^{\alpha}(f)(x):=\left(\int_0^{\infty}|\frac{\partial}{\partial t}B_t^{\alpha+1}f(x)|^2 tdt \right)^{\frac{1}{2}}
	= \left(\int_{0}^{\infty}|K^{\alpha}_t\ast f(x)|^2\frac{dt}{t}\right)^{\frac{1}{2}},
\end{eqnarray*}
where $\widehat{B_t^{\alpha}f}(\xi)=\left(1-\frac{|\xi|^2}{t^2}\right)^{\alpha}_+\hat f(\xi)$ is the classical Bochner-Riesz operator with index $\alpha$. Note that the kernel is given by $\widehat{K^{\alpha}_t}(\xi)=2(\alpha+1)\frac{|\xi|^2}{t^2}
\left(1-\frac{|\xi|^2}{t^2}\right)^{\alpha}_+.$ Here $\hat f$ denotes the Fourier transform of $f$ defined by $\hat{f}(\xi)=\int_{\R^n}f(x)e^{-2\pi ix.\xi}dx$. 

The square function naturally appears in the study of maximal Fourier multiplier operators and plays a crucial role. We refer the reader to~\cite{St, S, Car,Car2, Ch, Lee,Leesqr,LRS,LRS2} and references there in for details. 

The $L^p$ estimates 
\begin{eqnarray}\label{squarefun}
	\|G^{\alpha}(f)\|_{p} \lesssim \|f\|_{p}
\end{eqnarray}
for the square function $G^{\alpha}$ have been studied extensively in the literature. The Plancherel theorem yields $L^2(\R^n)$ boundedness of $G^{\alpha}$ for $\alpha>-\frac{1}{2}$, see~\cite{St}. For $p\neq 2$, it is conjectured that the estimate~(\ref{squarefun}) holds for $1<p< 2$  if, and only if $\alpha>n(\frac{1}{p}-\frac{1}{2})-\frac{1}{2}.$ Whereas for the range $p>2$ it is conjectured that the estimate~(\ref{squarefun}) holds if, and only if $\alpha>\alpha(p)-\frac{1}{2},$ where $$\alpha(p)= \max\left\{n\left|\frac{1}{p}-\frac{1}{2}\right|-\frac{1}{2},0\right\}.$$

The conjecture for the range $1<p<2$ has been settled, i.e.   the estimate~(\ref{squarefun}) holds if, and only if $\alpha>n(\frac{1}{p}-\frac{1}{2})-\frac{1}{2}$ for $1<p\leq 2$ and $n\geq 1$. The proof uses the idea of Stein's analytic interpolation for a family of operators between $L^2$ estimate for $\alpha>-\frac{1}{2}$ and $L^p$ estimate for $\alpha>\frac{n-1}{2}$, see~\cite{S,LRS2} for details. Further, in dimensions $n=1,2$, the conjecture has been proved to hold for the range $p> 2$ as well, see~\cite{KanSun} and \cite{Car} for the case of $n=1$ and $n=2$ respectively. However, for $n\geq 3$ and $p>2$ the sufficient part of the conjecture is not known yet completely. There are many interesting developments in this direction, see~\cite{Ch, See, LRS2, LRS, Leesqr} and references therein for more details. In order to state the recent development on the conjecture we set some notation. 

For $n\geq2,$ define $p_0(n)=2+\frac{12}{4n-6-k}$ where $n\equiv k~\text{mod}~3, k=0,1,2$.
Denote $$\mathfrak p_n=\text{min}\left\{p_0(n),\frac{2(n+2)}{n}\right\}.$$
Lee~\cite{Leesqr} proved the following result.
\begin{theorem}\cite{Leesqr}\label{square:Lee}
	For $n\geq 2$, the square function $G^{\alpha}$ satisfies the estimate~(\ref{squarefun}) for $p\geq \min\{\mathfrak p_n,\frac{2(n+2)}{n}\}$ and $\alpha>n(\frac{1}{2}-\frac{1}{p})-1$.
\end{theorem}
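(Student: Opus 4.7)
The plan is to follow the strategy of \cite{Leesqr}, namely to decompose the kernel $K^\alpha_t$ dyadically in its frequency support and reduce the square function estimate to a uniform single-scale estimate established through bilinear restriction. First I would rescale: writing $\widehat{K^\alpha_t}(\xi) = \psi(|\xi|/t)$ with $\psi(s) = 2(\alpha+1)s^2(1-s^2)^\alpha_+$, a change of variables in $t$ converts $G^\alpha$ to an $L^2_{dt/t}$-valued convolution operator whose symbol is a dilate of a single profile. Choose smooth cutoffs $\phi_j$ supported in $\{|1-s| \sim 2^{-j}\}$ for $j \geq 0$ and set $\psi_j = \phi_j \psi$, defining the pieces $G^{\alpha,j}$ accordingly, so that $G^\alpha \leq \sum_{j\geq 0} G^{\alpha,j}$. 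Each $\psi_j$ has amplitude $\lesssim 2^{-j\alpha}$ and frequency support in a thin annulus of thickness $\sim t\,2^{-j}$.

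Next I would reduce the bound for $G^{\alpha,j}$ to a uniform-in-$t$ estimate. By a Littlewood-Paley decomposition in the spatial frequency, one may restrict attention to functions whose Fourier support lies in an annulus $|\xi|\sim 2^\ell$; then a scaling argument collapses the $t$-integral on $[2^{\ell-1},2^{\ell+1}]$ down to $t \sim 1$. The matter reduces to the $L^p$ norm of the single-scale square function $\bigl(\int_{1/2}^2 |m^j_t(D)f|^2\,dt\bigr)^{1/2}$ whose symbol is supported in the $2^{-j}$-neighbourhood of the unit sphere. Upon invoking Minkowski's inequality and duality with a $g$-function, the crucial input is the thin-shell estimate
\[
\Bigl\|\Bigl(\int_{1/2}^{2}|m^j_t(D)f|^2\,dt\Bigr)^{1/2}\Bigr\|_p \lesssim 2^{-j\,\sigma(p)} \|f\|_p
\]
for some $\sigma(p) > n(\tfrac{1}{2}-\tfrac{1}{p}) - 1$ whenever $p\geq \mathfrak p_n$. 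This is precisely where Tao's bilinear restriction estimate for the sphere enters: following the Lee--Vargas--Bourgain--Guth philosophy one splits a thickened sphere into pairs of separated caps, applies the bilinear $L^{p_0/2}$-restriction bound, and recovers a linear thin-shell $L^p$ square function estimate with the sharp gain. For the range $p \geq 2(n+2)/n$ this bound alternatively follows from Stein--Tomas together with the elementary kernel/decay estimate for the Bochner--Riesz symbol.

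Finally I would sum in $j$: combining the amplitude factor $2^{-j\alpha}$ with the thin-shell bound yields an aggregate estimate of order $\sum_j 2^{-j(\alpha-n(1/2-1/p)+1)} \|f\|_p$, which is finite exactly when $\alpha > n(\tfrac{1}{2}-\tfrac{1}{p}) - 1$, matching the hypothesis. The conditions $p\geq \mathfrak p_n$ or $p \geq 2(n+2)/n$ determine the range in which $\sigma(p)$ is large enough to close the summation with the correct endpoint for $\alpha$.

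The main obstacle is Step 2: proving the thin-shell square function estimate with the optimal dependence on $2^{-j}$. The trivial $L^2$-bound (Plancherel, $\sigma(p) = 1/2$) only reaches $\alpha > (n-1)/2 - 1/p$, which is too weak. Beating this requires a genuine restriction-theoretic input — Tao's bilinear $L^{p_0/2}$ restriction estimate and its Bourgain--Guth-style passage to linear estimates — and the dependence of the admissible exponent $p_0(n)$ precisely on $n \bmod 3$ reflects the best currently known dependencies in that chain of arguments.
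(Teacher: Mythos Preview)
The paper does not prove this theorem at all: it is quoted verbatim as a result of Lee~\cite{Leesqr} and is used as a black box in the subsequent arguments. There is therefore no proof in the paper to compare your proposal against.

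That said, your outline is a faithful high-level sketch of the strategy in~\cite{Leesqr}: the dyadic decomposition into $2^{-j}$-thickened annuli, the reduction to a single-scale square function on $t\in[1/2,2]$, and the use of Tao's bilinear restriction estimate through a Bourgain--Guth-type mechanism to obtain the sharp thin-shell gain are exactly the ingredients Lee assembles. One point of caution: your remark that the trivial $L^2$ bound gives only $\sigma(p)=1/2$ and hence $\alpha>(n-1)/2-1/p$ is not quite the right bookkeeping --- the $L^2$ endpoint already gives the sharp $\alpha>-1/2$, and it is the \emph{other} endpoint (large $p$) where the restriction input is decisive. Also, the precise definition of $p_0(n)$ and its dependence on $n\bmod 3$ in Lee's paper comes from the multilinear-to-linear reduction combined with sharp local smoothing inputs, not directly from the bilinear restriction exponent; you would need to track that carefully if you were writing a full proof. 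But as a proof \emph{plan} for a cited result, your sketch is sound.
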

Motivated by the recent progress on the bilinear Bochner-Riesz problem and a wide scope of applications of Stein's square function, in this paper we introduce and study the bilinear analogue of the Stein's square function. Consequently, we discuss several connections of the square function in the context of bilinear multipliers. This allows us to obtain new results for maximal function associated with generalised bilinear Bochner-Riesz means and bilinear fractional Schr\"{o}dinger operator. Also, we improve upon a result by Grafakos, He and Honzik~\cite{GHH} for radial bilinear multipliers. These results are described in Sections~\ref{bbr}, \ref{sec:sch} and \ref{sec:brm}. 
Let us first briefly recall some recent developments in the direction of bilinear Bochner-Riesz means. 
\subsection{Bilinear Bochner-Riesz means and maximal function}\label{bbr}
For Schwartz class functions $f,g \in \mathcal{S}(\R^n), n\geq 1$ and $\alpha\geq 0$, the bilinear Bochner-Riesz mean $\mathcal{B}_R^{\alpha}(f,g)$ is defined by 
\begin{eqnarray*}\label{def:bbr}\B^{\alpha}_R(f,g)(x)=\int_{\R^{n}}\int_{\R^{n}}\left(1-\frac{|\xi|^2+|\eta|^2}{R^2}\right)^{\alpha}_{+}\hat{f}(\xi)\hat{g}(\eta)e^{2\pi ix.(\xi+\eta)}d\xi d\eta, ~R>0.
\end{eqnarray*}

The problem of finding necessary and sufficient conditions on exponents $p_1, p_2, p$ and the index $\alpha$ for which the estimate
\begin{eqnarray}\label{esti:bbr}
	\|\B_R^{\alpha}(f,g)\|_p \lesssim \|f\|_{p_1}\|g\|_{p_2}
\end{eqnarray}
holds is commonly referred to as the bilinear Bochner-Riesz problem. Observe that due to standard dilation argument it is enough to study the estimate \eqref{esti:bbr} with $R=1$. For $R=1$ we drop the suffix $R$ from $\B^{\alpha}_R$ and simply denote it by $\B^{\alpha}$.

If $\alpha=0$ the operator $\B^{0}$ (denoted by $\B$) is called the bilinear ball multiplier operator. In dimension $n=1$ Grafakos and Li~\cite{GL} proved the estimate~\eqref{esti:bbr} for $\B$ for all $p_1, p_2, p$ satisfying $2\leq p_1,p_2,p'<\infty$. Here $p'$ denotes the conjugate index of $p$ given by $\frac{1}{p}+\frac{1}{p'}=1$. The range $L=\{(p_1,p_2,p): 2\leq p_1,p_2,p'<\infty\}$ is referred to as the local $L^2-$range of exponents. Later,  Diestel and Grafakos~\cite{DG} showed that in dimension $n\geq 2$ the operator $\B$ fails to satisfy the estimate~\eqref{esti:bbr} provided exactly one of $p_1, p_2$ or  $p'$ is less than $2.$  In~\cite{BGSY} Bernicot et al. initiated the study of the operator $\B^{\alpha}, \alpha>0$ and established the estimate~\eqref{esti:bbr} under certain conditions on $p_1,p_2, \alpha$ and the dimension $n$. In dimension $n=1$ the results proved in~\cite{BGSY} provide an almost complete picture in the Banach triangle, i.e., $1\leq p_1,p_2,p \leq \infty$.  However, in higher dimensions the results are far from being optimal. Liu and Wang~\cite{LW} extended some of these result, specifically to the non-Banach traingle (i.e. when $p<1$) thereby improved the range of $p_1,p_2, p$. Later, Jeong, Lee, and Vargas~\cite{JLV} improved the range of exponents significantly when $p_1, p_2\geq 2$ and lowered the bounds on $\alpha$ for the estimate \eqref{esti:bbr}. They decomposed the operator $\B^{\alpha}$ into discretized square functions and obtained new results, see [Section 3, \cite{JLV}] for details. In particular,  they proved optimal result for the estimate \eqref{esti:bbr} when $p_1=p_2=2$ and $\alpha>0$ for all $n\geq 2$. 

The maximal function associated with the bilinear Bochner-Riesz means, defined by 
$$\B^{\alpha}_*(f,g)(x)=\sup_{R>0} |\B^{\alpha}_R(f,g)(x)|$$
plays a key role in addressing the almost everywhere convergence of the bilinear Bochner-Riesz means $\B_R^{\alpha}(f,g)$ as $R\rightarrow \infty$.  We refer to Grafakos, He and Hon\'{z}ik~\cite{GHH} and Jeong and Lee~\cite{JL} for initiating the study of $L^p$ estimates for $\B^{\alpha}_*$. Recently,  Jotsaroop and  Shrivastava~\cite{JS} introduced a different approach to study $L^p$ boundedness of bilinear maximal function $\B^{\alpha}_*$. Their approach works uniformly in all dimensions. They recovered the results obtained in~\cite{JLV} for Bochner-Riesz means when  $n\geq 2$ and provided new and improved results for the case of dimension $n=1$ for exponents in the non-Banach triangle. We also refer to ~\cite{JSK} for weighted estmates for the bilinear Bochner-Riesz means $\B^{n-\frac{1}{2}}$. 
\subsection{Bilinear Bochner-Riesz square function}
The bilinear Bochner-Riesz square function of order $\alpha$, denoted by $\G^{\alpha}$,  is defined by 
\begin{eqnarray}\label{bilinearsquare}
	\G^{\alpha}(f,g)(x)&:=&\left(\int_0^{\infty}\left|\frac{\partial}{\partial R}\mathcal{B}_R^{\alpha+1}(f,g)(x)\right|^2 RdR \right)^{\frac{1}{2}}.
\end{eqnarray}
Note that $\frac{\partial}{\partial R}\mathcal{B}_R^{\alpha+1}(f,g)(x)$ makes sense for $\alpha>-1$ for each $R>0$.  We rewrite the square function  $\G^{\alpha}(f,g)$ in the following way. 
\begin{eqnarray}
	\G^{\alpha}(f,g)(x)	&=&\nonumber\left(\int_{0}^{\infty}|\mathcal K^{\alpha}_R\ast (f,g)(x)|^2\frac{dR}{R}\right)^{\frac{1}{2}},
\end{eqnarray}
where $\K^{\alpha}_R\ast (f,g)(x)= \K^{\alpha}_R\ast (f\otimes g)(x,x)$. The kernel $\K^{\alpha}_R$ is given by $\widehat{\mathcal K^{\alpha}_R}(\xi,\eta)=2(\alpha+1)\frac{|\xi|^2+|\eta|^2}{R^2}
\left(1-\frac{|\xi|^2+|\eta|^2}{R^2}\right)^{\alpha}_+$. In the spatial variables the kernel is of the form 
$${\mathcal K^{\alpha}_R}(y_1,y_2) = c_{n+\alpha}R^{2n-2}\Delta\left(\frac{ J_{\alpha+n} (|(Ry_1,Ry_2)|)} {|(Ry_1,Ry_2)|^{\alpha+n}}\right),~y_1,y_2\in \R^n.$$
Here $J_{\alpha+n}$ denotes the Bessel function of order $\alpha+n$. 

Let us denote 
\begin{eqnarray*}\label{gfun}
	\g_R^{\alpha}(f,g)(x)=\K^{\alpha}_R\ast (f, g)(x).
\end{eqnarray*}

Our main goal in this paper is to investigate the necessary and sufficient conditions on the exponents $p_1,p_2, p$ and $\alpha$ so that the estimate 
\begin{eqnarray}\label{esti:bbs}
	\|\G^{\alpha}(f,g)\|_p \lesssim \|f\|_{p_1}\|g\|_{p_2}
\end{eqnarray}
holds. Here the notation $A\lesssim B$ in the above means that there is an implicit constant $C>0$ such that $A\leq CB.$ The constant $C$ is independent of essential quantities like functions appearing in the estimate. However, it may depend on parameters $\alpha, n,p_1$ and $p_2$. Sometimes, we will also use the notation $A\lesssim_{\epsilon} B$ to emphasize the dependence of the implied constant on the parameter $\epsilon$. 
%Further, recall that the Stein's square function is an useful tool in the study of maximal (linear ) multiplier operators. In this connection we obtain applications of bilinear square function for radial multipliers and maximal bilinear operators. 
%
%The decomposition of the bilinear Bochner-Riesz operator as done in~\cite{JS} and the techniques used for the Stein's square function play key roles in proving $L^p$ estimate~\eqref{esti:bbs} for the bilinear square function $\G^{\alpha}$. 
Also, we use calligraphy letter to denote the bilinear operators whereas the corresponding capital letters are used to denote the operators from the theory of linear operators.
\subsection*{Organization of the paper} In Section~\ref{sec:main} we provide statements of  results of this paper. The proofs of square function boundedness results, namely Theorems~\ref{maintheorem:sqr} and \ref{dim1} are given in Sections~\ref{sec:pqr} and \ref{sec:dim1} respectively. 
We discuss the idea of analytic interpolation for $\mathcal G^{\alpha}$ in Section~\ref{sec:inter} and prove Theorem~\ref{thm:inter}. The necessary conditions on $\alpha$ are obtained in Section~\ref{sec:nec}. Finally, the sparse domination result Theorem~\ref{thm:critical} is proved in the Appendix~\ref{sec:critical}. 
 
%%%%%%%%%%%%%%%%%%

%%%%%%%%%%%%
%%%%%%%%%%%%
\section{Results}\label{sec:main}
\subsection{$L^p$ estimates for bilinear square function $\G^{\alpha}$ }
%For $n\geq2,$ define $p_0(n)=2+\frac{12}{4n-6-k}$ where $n\equiv k~\text{mod}~3, k=0,1,2$.
%Denote $\mathfrak p_n=\text{min}~\left\{p_0(n),\frac{2(n+2)}{n}\right\}.$ 
Let $n\geq 2$ and $1\leq p_1,p_2\leq \infty$. Let us consider the following notation. 
\begin{equation*}
	\alpha_*(p_1,p_2)=\begin{cases}\alpha(p_1)+\alpha(p_2)&\textup{when}~~ \mathfrak p_n\leq p_1,p_2\leq\infty;\\\\
		\alpha(p_1)+\left(\frac{1-2p_2^{-1}}{1-2(\mathfrak p_n)^{-1}}\right)\alpha(\mathfrak p_n)&	\textup{when}~~ \mathfrak p_n\leq p_1\leq\infty~ \text{and}~2\leq p_2<\mathfrak p_n;\\\\
		\left(\frac{1-2p_1^{-1}}{1-2(\mathfrak p_n)^{-1}}\right) \alpha(\mathfrak p_n)+\alpha(p_2)&	\textup{when}~~ 2\leq p_1<\mathfrak p_n~ \text{and}~\mathfrak p_n\leq p_2\leq\infty;\\\\
		\left(\frac{2-2p_1^{-1}-2p_2^{-1}}{1-2(\mathfrak p_n)^{-1}}\right)\alpha(\mathfrak p_n)&\textup{when}~~2\leq p_1,p_2 <\mathfrak p_n.
	\end{cases}
\end{equation*}
The following $L^p$-boundedness results for $\mathcal G^{\alpha}$ hold.  
\begin{theorem}\label{dim1} Let $n=1$ and $1<p_1,p_2<\infty$ be such that $\frac{1}{p_1}+\frac{1}{p_2}=\frac{1}{p}$. The bilinear Bochner-Riesz square function $\mathcal G^{\alpha}$ maps $L^{p_1}(\R)\times L^{p_2}(\R)$ into $ L^{p}(\R)$ for each of the following cases. 
	\begin{enumerate}
		\item $p_1,p_2\geq 2$ and $\alpha>0$.
		\item $1<p_1<2, p_2\geq 2$ and $\alpha>\frac{1}{p_1}-\frac{1}{2}$.
		\item $1<p_2<2, p_1\geq 2$ and $\alpha>\frac{1}{p_2}-\frac{1}{2}$.
		\item \label{case4} $1<p_1,p_2<2$ and $\alpha>\frac{1}{p}-1$. 
	\end{enumerate}
\end{theorem}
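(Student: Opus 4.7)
The plan is to dominate $\mathcal{G}^{\alpha}$ pointwise by products of one-dimensional operators with known mapping properties, and then to use Stein's complex interpolation to reach the sharp thresholds. First I would split the symbol as $\widehat{\mathcal{K}^{\alpha}_R}=\widehat{\mathcal{K}^{\alpha,1}_R}+\widehat{\mathcal{K}^{\alpha,2}_R}$ according to the two summands of $|\xi|^2+|\eta|^2$, and apply the Beta-integral identity
\[
(1-u-v)_+^{\alpha} \;=\; \frac{1}{B(\alpha_1+1,\alpha_2+1)}\int_0^1 (s-u)_+^{\alpha_1}(1-s-v)_+^{\alpha_2}\,ds
\]
for any $(\alpha_1,\alpha_2)$ with $\alpha_1+\alpha_2=\alpha-1$ and $\alpha_i>-1$. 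With the substitutions $R_1=R\sqrt{s}$, $R_2=R\sqrt{1-s}$, the factor carrying $|\xi|^2/R^2$ produces the Stein kernel symbol $\widehat{K^{\alpha_1}_{R_1}}$ and the remaining factor is the linear Bochner--Riesz symbol of $B^{\alpha_2}_{R_2}$. Taking the $L^2(dR/R)$ norm, applying Minkowski, using the invariance $dR/R=dR_1/R_1$ for fixed $s$, and dominating $|B^{\alpha_2}_{R_2}g(x)|\le B_*^{\alpha_2}g(x)$ produce the pointwise inequality
\[
\mathcal{G}^{\alpha}(f,g)(x)\;\lesssim\;G^{\alpha_1}(f)(x)\,B_*^{\alpha_2}(g)(x)+B_*^{\tilde\alpha_1}(f)(x)\,G^{\tilde\alpha_2}(g)(x)
\]
valid for any $(\alpha_1,\alpha_2),(\tilde\alpha_1,\tilde\alpha_2)$ with sum $\alpha-1$ and all indices $>-1$. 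The sharp one-dimensional linear theory---Kaneko--Sunouchi ($G^{\beta}\colon L^p\to L^p$ for $p\ge 2$, $\beta>-1/2$), its $1<p<2$ counterpart $\beta>1/p-1$ obtained by classical analytic interpolation, and $L^p$-boundedness of $B_*^{\beta}$ for $\beta\ge 0$ (with the $\beta=0$ case furnished by the Carleson--Hunt theorem)---then yields a preliminary range of $L^{p_1}\times L^{p_2}\to L^p$ bounds via H\"older.

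Direct application of this pointwise bound gives only $\alpha>1/2$ in case (1) and $\alpha>\max(1/p_1,1/p_2)$ rather than the required $\alpha>1/p-1$ in case (4). The sharp thresholds are reached by Stein's analytic interpolation anchored on two endpoints: an $L^2\times L^2\to L^1$ bound for $\mathcal{G}^{\alpha}$ with $\alpha>0$---which I would prove by a dyadic Littlewood--Paley decomposition of $f,g$, exploiting the frequency support $\widehat{\mathcal{K}^{\alpha}_R}\subset\{|\xi|^2+|\eta|^2\le R^2\}$ to localise each block to a bounded range of $R$ and then treating each block by a Grafakos--Li-type argument for the bilinear ball multiplier on $\mathbb{R}$ lifted to the vector-valued $L^2(dR/R)$ setting---and an $L^1\times L^1\to L^{1/2,\infty}$ bound for sufficiently large $\alpha$, coming from the integrable-kernel decay of $\mathcal{K}^{\alpha}_R$ and the bilinear Hardy--Littlewood maximal function $Mf\cdot Mg$. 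Case (4) interpolates these endpoints along $1/p_i=1/2+\theta/2$, $\alpha=\theta$, matching $\alpha>1/p-1$; cases (2) and (3) arise by varying only one of $p_1,p_2$; and case (1) away from $p_1=p_2=2$ follows by interpolating the $L^2\times L^2\to L^1$ endpoint against the product decomposition itself.

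The main obstacle is securing the sharp $L^2\times L^2\to L^1$ endpoint at $\alpha>0$. The product decomposition alone reaches only $\alpha>1/2$ here, and closing the gap to $\alpha>0$ requires a vector-valued form of the Grafakos--Li ball multiplier argument on $\mathbb{R}$, adapted to carry the $L^2(dR/R)$ norm intrinsic to a square function.
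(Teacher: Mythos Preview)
Your Beta-integral factorisation and the pointwise bound
\[
\mathcal{G}^{\alpha}(f,g)\;\lesssim\;G^{\alpha_1}(f)\,B_*^{\alpha_2}(g)+B_*^{\tilde\alpha_1}(f)\,G^{\tilde\alpha_2}(g),\qquad \alpha_1+\alpha_2=\tilde\alpha_1+\tilde\alpha_2=\alpha-1,
\]
are correct, but the $\tfrac12$-loss you observe is structural: the step $|B^{\alpha_2}_{R\sqrt{1-s}}g|\le B_*^{\alpha_2}g$ forces $\alpha_2\ge 0$, and this is exactly where the half-derivative is spent. The paper's proof avoids this by never passing to $B_*$. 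After first localising in $|\xi|$ via $\psi(2^j(1-|\xi|^2/R^2))$ and then applying the Stein--Weiss identity, the Bochner--Riesz piece $B_t^{\delta}g$ runs over the short interval $0\le t\le R_j$, and Cauchy--Schwarz in $t$ produces the \emph{square-maximal} operator
\[
g\longmapsto \sup_{R>0}\Big(\frac{1}{R}\int_0^{R}|B_t^{\delta}g|^2\,dt\Big)^{1/2},
\]
which by \cite[Lemma~4.3]{JS} is bounded on $L^{p_2}(\R)$ for $\delta>-\tfrac12$ when $p_2\ge 2$ and for $\delta>\tfrac{1}{p_2}-1$ when $1<p_2<2$. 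This single replacement recovers exactly the missing $\tfrac12$ and gives the thresholds of cases (1)--(4) directly, with no interpolation.

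Your interpolation scheme does not close the gap. Even granting the $L^2\times L^2\to L^1$ endpoint at $\alpha>0$ (which you leave as a vector-valued Grafakos--Li sketch), interpolating it against points where you only have $\alpha>\tfrac12$ from the product bound yields $\alpha>\tfrac{\theta}{2}$ at intermediate $\theta$, not $\alpha>0$; so case~(1) is reached only at $(p_1,p_2)=(2,2)$. In case~(2), interpolating $(2,2)$ with $\alpha>0$ against $(1+\epsilon,2)$ with the product bound $\alpha>1/(1+\epsilon)$ gives at best $\alpha>\tfrac{2}{p_1}-1$, strictly worse than the claimed $\alpha>\tfrac{1}{p_1}-\tfrac12$. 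Your diagonal interpolation for case~(4) is fine and is essentially the $n=1$ instance of the paper's Theorem~\ref{thm:inter}, but it only covers $p_1=p_2$. The missing ingredient is not a new endpoint but the averaged maximal operator above in place of $B_*^{\alpha_2}$.
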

\begin{theorem} \label{maintheorem:sqr} Let $n\geq 2$ 
	and $(p_1,p_2,p)$ be such that $p_1,p_2\geq 2$ and $\frac{1}{p}=\frac{1}{p_1}+\frac{1}{p_2},$ then for $\alpha>\alpha_*(p_1,p_2)$ the bilinear Bochner-Riesz square function $\mathcal G^{\alpha}$ maps $L^{p_1}(\R^n)\times L^{p_2}(\R^n)$ into $ L^{p}(\R^n)$. 
\end{theorem}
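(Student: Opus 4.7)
The plan is to decompose the bilinear Bochner--Riesz square function into pieces controlled by linear Stein-type square functions and maximal linear Bochner--Riesz operators applied to $f$ and $g$ separately, and then combine Theorem~\ref{square:Lee} of Lee with bilinear complex interpolation to cover the claimed range. By the scaling $\mathcal K_R^\alpha(y_1,y_2)=R^{2n}\mathcal K_1^\alpha(Ry_1,Ry_2)$ together with a Littlewood--Paley decomposition of $f,g$, I would first reduce to a localized square function with $R\in[1,2]$ and $f,g$ Fourier-supported in the ball of radius $\lesssim 1$.

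The key technical input is a subordination identity obtained from the beta integral,
\begin{equation*}
(1-x-y)_+^{\alpha+1} \;=\; \frac{\Gamma(\alpha+2)}{\Gamma(\alpha_1+1)\Gamma(\alpha_2+1)} \int_0^1 \Bigl(1-\tfrac{x}{s}\Bigr)_+^{\alpha_1} \Bigl(1-\tfrac{y}{1-s}\Bigr)_+^{\alpha_2}\, s^{\alpha_1}(1-s)^{\alpha_2}\,ds,
\end{equation*}
valid for any splitting $\alpha_1+\alpha_2=\alpha$ with $\alpha_i>-1$. Setting $x=|\xi|^2/R^2$, $y=|\eta|^2/R^2$ expresses $\mathcal B^{\alpha+1}_R(f,g)(x)$ as a weighted integral of products $B^{\alpha_1}_{R\sqrt{s}}f(x)\cdot B^{\alpha_2}_{R\sqrt{1-s}}g(x)$. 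Differentiating in $R$ yields two cross-terms via the product rule; taking the $L^2(R\,dR)^{1/2}$-norm, applying Minkowski's inequality to pass it through the $s$-integral, and changing variables $T=R\sqrt{s}$ (respectively $T=R\sqrt{1-s}$) in the differentiated factor dominates $\mathcal G^\alpha(f,g)(x)$ pointwise by
\begin{equation*}
C\bigl(G^{\alpha_1-1}(f)(x)\cdot B^{\alpha_2}_*g(x) \;+\; B^{\alpha_1}_*f(x)\cdot G^{\alpha_2-1}(g)(x)\bigr),
\end{equation*}
where $G^\beta$ is Stein's linear square function and $B^\beta_*$ is the linear maximal Bochner--Riesz operator.

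At the corners where $\mathfrak p_n\le p_1,p_2\le\infty$, Theorem~\ref{square:Lee} gives $\|G^\beta\|_{L^p\to L^p}<\infty$ for $\beta>\alpha(p)-\tfrac12$, and Stein's classical theorem gives the analogous bound for $B^\beta_*$. The regularity budget must be split across the two cross-terms --- using \emph{different} decompositions $(\alpha_1,\alpha_2)$ for the two terms so that neither suffers a full unit loss --- and H\"older with $\tfrac1p=\tfrac1{p_1}+\tfrac1{p_2}$ then yields the estimate for $\alpha>\alpha(p_1)+\alpha(p_2)=\alpha_*(p_1,p_2)$ at these corners. The piecewise-linear interior of $\alpha_*(p_1,p_2)$ is then recovered by bilinear complex interpolation between these corner estimates and the Plancherel endpoint $L^2\times L^2\to L^1$, which holds for all $\alpha>0$.

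The main obstacle is the regularity arithmetic in the cross-term analysis: a single subordination requires simultaneously $\alpha_i>\alpha(p_i)+\tfrac12$ to bound both cross-terms, giving $\alpha>\alpha_*(p_1,p_2)+1$, which is off by a full unit. Avoiding this loss will require either two different subordination splittings (each optimized for one cross-term, suitably combined) or the use of local variants of the linear Stein square function that the authors advertise in the introduction, whose allocation of the ``$\tfrac12$''-terms differs from $G^\beta$. This regularity bookkeeping is the crux of the proof.
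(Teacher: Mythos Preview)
Your framework---subordination to products of linear operators, then H\"older and linear Bochner--Riesz/Stein square function bounds---is exactly the one the paper uses, and your pointwise domination is correct. You also correctly identify the obstacle: a single subordination splitting $(\alpha_1,\alpha_2)$ forces $\alpha_i>\alpha(p_i)+\tfrac12$ for both $i$, losing one full unit. But your proposed remedy of using ``two different subordination splittings, one per cross-term'' does not work: the two cross-terms arise from differentiating a \emph{single} integral representation of $\mathcal B_R^{\alpha+1}$, so they come with the same $(\alpha_1,\alpha_2)$. Writing $\partial_R\mathcal B_R^{\alpha+1}$ as a convex combination of two representations still produces all four cross-terms, and the bad ones remain. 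Your other suggestion, ``local variants of the linear square function,'' is in the right direction but is not a plan.

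The missing idea is a preliminary \emph{dyadic decomposition in one variable} before any subordination. The paper localizes $1-|\xi|^2/R^2\approx 2^{-j}$ via a smooth cutoff $\psi\bigl(2^j(1-|\xi|^2/R^2)\bigr)$, obtaining pieces $\mathfrak g_{j,R}^\alpha$; only then is the Stein--Weiss subordination applied to the $\eta$-variable, with $\varphi_R(\xi)=1-|\xi|^2/R^2\approx 2^{-j}$ now fixed. This changes the structure in two ways. First, the $\xi$-factor is no longer a full Bochner--Riesz operator but a thin-annulus multiplier of width $2^{-j}$, whose maximal and square-function norms carry a factor $2^{j(\alpha(p_1)-\tfrac12+\epsilon)}$ rather than requiring excess regularity (this is Lemma~\ref{localisedversion} and Theorem~\ref{maxsquare}, i.e.\ the ``local variants'' you allude to). Second, the $\eta$-factor lives at the small scale $t\le R\sqrt{2^{-j+1}}$, which contributes an extra factor $2^{-5j/4}$ (or $2^{-j/4}$ after Cauchy--Schwarz). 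These $j$-dependent gains exactly absorb the apparent regularity deficit: summing $\sum_j 2^{-j(\alpha-\alpha(p_1)-\epsilon)}\cdot 2^{-cj}$ converges once $\alpha>\alpha_*(p_1,p_2)$, with the constraints $\beta>\alpha(p_1)+\tfrac12$, $\delta>\alpha(p_2)-\tfrac12$ on the subordination parameters $\beta+\delta=\alpha$ now compatible. The interpolation down to $2\le p_i<\mathfrak p_n$ is then straightforward. Without the dyadic localization in $\xi$ your argument is stuck at $\alpha>\alpha_*(p_1,p_2)+1$.
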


Next, we make use of Stein's interpolation for analytic family of bilinear operators (see [Theorem 7.2.9,  \cite{Grafakosmodern}]) to extend boundedness of $\mathcal G^{\alpha}$ when either of the exponents $p_1$ or $p_2$ is less than $2$.  This idea requires $L^p$ estimates for $\mathcal G^{\alpha}$ when $\alpha$ is larger than the critical index $n-\frac{1}{2}$. The $L^p$ estimates for $\mathcal G^{\alpha}$ for $\alpha>n-\frac{1}{2}$ can be easily proved using the arguments from its linear counterpart. Since this part does not require any non-trivial modification in the existing arguments, we skip the details for now and  provide them in the Appendix for completeness. 
\begin{theorem}\label{thm:inter}
Let $1<p<2$, then $\G^{\alpha}$ is bounded from $L^{p}(\mathbb{R}^{n})\times L^{p}(\mathbb{R}^{n})$ to $L^{p/2}(\mathbb{R}^{n})$ for $\alpha>(2n-1)(\frac{1}{p}-\frac{1}{2})$.
\end{theorem}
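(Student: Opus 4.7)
The proof will use Stein's analytic interpolation for bilinear operators between the $L^2 \times L^2 \to L^1$ endpoint for $\alpha > 0$ (from Theorem~\ref{maintheorem:sqr} when $n \geq 2$, and from Theorem~\ref{dim1}(1) when $n = 1$) and the $L^{p_0} \times L^{p_0} \to L^{p_0/2}$ endpoint for $\alpha > n - \frac{1}{2}$, which follows from the sparse domination of Theorem~\ref{thm:critical} applied with $p_1 = p_2 = p_0 > 1$. The key point is that $p_0 > 1$ can be taken arbitrarily close to $1$, so in the limit we should recover exactly the threshold $(2n-1)(\frac{1}{p} - \frac{1}{2})$ of the theorem.

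Concretely, fix small $\epsilon, \delta > 0$, set $p_0 = 1 + \delta$, and define the analytic family of orders $\alpha(z) = \epsilon + z(n - \frac{1}{2})$ on the closed strip $\{z \in \C : 0 \leq \Re z \leq 1\}$. Since $\G^{\alpha(z)}(f,g)$ is not literally analytic in $z$ (because of the square root), I first linearize by duality on $L^2(dR/R)$:
\[
\G^{\alpha(z)}(f,g)(x) = \sup_{\|H(x,\cdot)\|_{L^2(dR/R)} \le 1} \Big|\int_0^{\infty} H(x,R)\, \g_R^{\alpha(z)}(f,g)(x)\, \frac{dR}{R}\Big|,
\]
and then pair against a test function $\phi$ to obtain a multilinear form $\Lambda_z(f,g,H,\phi)$ that is analytic on the open strip (since $(1-u)^{\alpha(z)}_+$ is analytic in $\alpha$ for $\Re \alpha > -1$) and continuous on its closure. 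On $\Re z = 0$ the first endpoint estimate applies, and on $\Re z = 1$ the sparse domination gives the second; Stein's interpolation at $z = \theta$ then yields $\G^{\alpha(\theta)} : L^{p} \times L^{p} \to L^{p/2}$, where $\theta$ solves $\frac{1}{p} = \frac{1-\theta}{2} + \frac{\theta}{p_0}$ and $\alpha(\theta) = \epsilon + \theta(n - \frac{1}{2})$. Letting $\delta, \epsilon \to 0^+$ gives $\theta \to \frac{2}{p} - 1$ and $\alpha(\theta) \to (2n-1)(\frac{1}{p} - \frac{1}{2})$, which proves the theorem.

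The main technical hurdle is the verification of admissible growth of $\Lambda_z$ along the vertical lines $\Re z = 0$ and $\Re z = 1$. This is handled by the splitting $(1-u)^{\alpha(z)}_+ = (1-u)^{\Re \alpha(z)}_+ \cdot e^{i\,\Im \alpha(z)\, \log(1-u)}$, which shows that the imaginary part of $\alpha(z)$ only inserts a unit-modulus phase into the multiplier; combined with the polynomial prefactor $2(\alpha(z)+1)$ and a routine tracking of constants through the proofs of the two endpoint theorems, this yields operator-norm growth of polynomial order in $|\Im z|$, which is admissible in the sense of Stein. The rest of the argument is then a direct application of the interpolation theorem.
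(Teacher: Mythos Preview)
Your overall strategy is the same as the paper's: linearize $\G^{\alpha}$ via duality in $L^2(dR/R)$, embed into an analytic family in $\alpha$, and interpolate between the $(2,2,1)$ estimate at $\Re\alpha>0$ and the $(p_0,p_0,p_0/2)$ estimate at $\Re\alpha>n-\tfrac12$, then let $p_0\downarrow 1$.

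The gap is in your verification of admissible growth. The observation that $(1-u)_+^{\alpha(z)}=(1-u)_+^{\Re\alpha(z)}e^{i\,\Im\alpha(z)\,\log(1-u)}$ is correct, but a unit-modulus multiplier does \emph{not} in general leave bilinear operator norms unchanged, so this by itself says nothing about $\|\G^{\alpha(z)}\|$. And the ``routine tracking of constants'' through the two endpoint proofs does \emph{not} produce polynomial growth. On $\Re z=0$ the proof of Theorem~\ref{maintheorem:sqr} uses the Stein--Weiss identity~\eqref{thankstostein}, whose constant $c_{\alpha}=2\Gamma(\beta+\delta+1)/(\Gamma(\delta+1)\Gamma(\beta))$ with $\delta$ complex has $1/|\Gamma(\delta+1+i\tau)|\lesssim e^{C|\tau|^{2}}$; on $\Re z=1$ the Calder\'on--Zygmund argument behind Theorem~\ref{thm:critical} rests on kernel bounds via Bessel functions $J_{\alpha+n}$ of complex order, which again carry factors $e^{C|\tau|^{2}}$. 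So the honest growth rate along the vertical lines is $e^{C|\tau|^{2}}$, not polynomial.

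This is not fatal: $e^{C|\tau|^{2}}$ is still admissible for bilinear analytic interpolation (one needs $\log K_j(\tau)\le Ae^{a|\tau|}$ for some $a<\pi$, which $C\tau^{2}$ satisfies). But you must actually establish that bound---via the Gamma and Bessel asymptotics just described---rather than appeal to the unit-modulus phase, which is a red herring. Once that is done, the rest of your argument coincides with the paper's proof.
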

\begin{remark}
	Observe that when $n=1$ we have that $\frac{1}{p}-\frac{1}{2}<\frac{2}{p}-1$ for $p<2$. Therefore, we get an improved range of exponents in Theorem~\ref{thm:inter} as compared to case $(4)$ when $p_1=p_2$ in Theorem~\ref{dim1}. 
\end{remark}
Next result describes necessary conditions for the $L^p$ boundedness of the square function $\G^{\alpha}$.
\begin{proposition}\label{prop:neccond}
	Assume that $\mathcal{G}^{\alpha}$ is bounded from $L^{p_1}(\R^n)\times L^{p_2}(\R^n)\rightarrow L^p(\R^n)$. Then the exponents satisfy the following necessary conditions. 
	\begin{enumerate}
		\item $\alpha>\max\left\{n\left(\frac{1}{p}-1\right)-\frac{1}{2},-\frac{1}{2}\right\}$ for all $n\geq 1$ and $p_1,p_2\geq 1.$
		\item $\alpha>\max\left\{\frac{n}{2}-\frac{n}{p_1}-\frac{n}{2p_2}-1,\,\frac{n}{2}-\frac{n}{2p_1}-\frac{n}{p_2}-1,- \frac{1}{2}\right\}$ where $n\geq 2$ and $1\leq p_1,p_2,p \leq \infty$. 
	\end{enumerate} 	
\end{proposition}

Finally, we show that  $L^p$-estimates for $\G^{\alpha}$ can be used to prove new results for various types of bilinear operators. 
%In particular, we extend the $L^p$ boundedness properties of the maximal function $\B_*^{\alpha}(f,g)$ to maximal function associated with generalised bilinear Bochner-Riesz means. This method also applies to obtain new results for the maximal bilinear fractional Schr\"{o}dinger operator. 
%In Section \ref{sec:nec} we prove necessary conditions on various parameters for $\mathcal{G}^{\alpha}$ to be bounded on the triplet $(p_1,p_2,p)$. Using Reimann-Liouville formula in equation \eqref{RLF} we deduce $L^p$ boundedness of maximal function corresponding to a class of bilinear multipliers of the form $m(\left|(\xi,\eta)\right|^2)$ on $\R^{2n}$ from the boundedness of the square function $\G^{\alpha}$. As a particular case we get some new boundedness results for bilinear fractional Schr\"{o}dinger multiplier and generalised spherical maximal function. See Section \ref{appl} for details. 
\subsection{Generalised Bilinear Bochner-Riesz means and maximal function}\label{bbr}
Let $\alpha, \lambda>0$. For Schwartz class functions $f,g \in \mathcal{S}(\R^n), n\geq 1$ consider  the generalised bilinear Bochner-Riesz mean $\mathcal{B}_{\lambda,R}^{\alpha}(f,g)$ defined by 
\begin{eqnarray*}\label{def:bbr}\B^{\alpha}_{\lambda, R}(f,g)(x)=\int_{\R^{n}}\int_{\R^{n}}\left(1-\frac{|(\xi,\eta)|^{\lambda}}{R^{\lambda}}\right)^{\alpha}_{+}\hat{f}(\xi)\hat{g}(\eta)e^{2\pi ix.(\xi+\eta)}d\xi d\eta, ~R>0,
\end{eqnarray*}
Note that when $\lambda=2$ we have $\B^{\alpha}_{\lambda, R}(f,g)(x)=\B^{\alpha}_{R}(f,g)(x)$. Consider the maximal function 
$$\B^{\alpha}_{\lambda, *}(f,g)(x)=\sup_{R>0}|\B^{\alpha}_{\lambda, R}(f,g)(x)|.$$
The following estimate holds.  
%\begin{corollary}\label{GBBR2} Let $\alpha_*(p_1,p_2)$ be as in Theorem \ref{maintheorem:sqr}. When $n\geq 2$ and $\alpha>\alpha_*(p_1,p_2)+1/2$, $\mathcal{B}^{\alpha}_{*,\lambda}(f,g)$ is bounded from $L^{p_1}(\R^{n})\times L^{p_2}(\R^n)\rightarrow L^p(\R^n)$ where $ p_1,p_2\geq 2$ and $\frac{1}{p}=\frac{1}{p_1}+\frac{1}{p_2}$.	
%\end{corollary}
\begin{theorem}\label{GBBR} 
	Let $n\geq 2$ and $\alpha>\alpha_*(p_1,p_2)+1/2$. Then the maximal function $\mathcal{B}^{\alpha}_{*,\lambda}(f,g)$ maps $L^{p_1}(\R^{n})\times L^{p_2}(\R^n)\rightarrow L^p(\R^n)$ where $ p_1,p_2\geq 2$ and $\frac{1}{p}=\frac{1}{p_1}+\frac{1}{p_2}$.	
\end{theorem}
 Note that invoking $L^p$ boundedness results for the square function from Theorem~\ref{dim1} and ~\ref{maintheorem:sqr} we get $L^p$ estimates for the maximal function $\mathcal{B}^{\alpha}_{\lambda,*}(f,g)(x)$. This generalises the results proved in~\cite{JLV,JS} to the setting of generalised bilinear Bochner-Riesz means. We would like to emphasise here that the methods used in~~\cite{JLV,JS} explicitly use the fact $\lambda=2$ and do not apply to the case of $\lambda\neq 2$ directly. Therefore, the results obtained in Theorem~\ref{GBBR} are new for $\lambda\neq 2.$  This is possible due to the use of square function. However we conjecture that the range of $L^p$ boundedness of $\B^{\alpha}_{\lambda, *}$ for $\lambda\neq 2$ in theorem \ref{GBBR} above should be similar to the standard maximal bilinear Bochner Riesz means, i.e. the case of $\lambda=2$.

\subsection{Bilinear fractional Schr\"{o}dinger multiplier} \label{sec:sch}
The fractional Schr\"{o}dinger equation is defined as
\begin{eqnarray*}
	\frac{\partial}{\partial s}u(s,x)=\left(-\triangle\right)^{\beta}u(s,x)&\text{when}~s>0\\
	u(0,x) =f(x).
\end{eqnarray*}
The solution to this equation with initial data $f$ is of the form 
$$e^{is\left(-\triangle\right)^{\beta}} f(x)=\int_{\R^n}m_{\beta}(s|\xi|^2)\widehat{f}(\xi)e^{2\pi i x\cdot \xi} d\xi, s>0,$$ 
where $m_{\beta}(u)= e^{i|u|^{\beta}}, u\in \R,\beta>0$ and it is called the fractional Schr\"{o}dinger multiplier. The problem of finding optimal $\gamma$ for $\beta=1$ such that $e^{-is\triangle}f\rightarrow f$ a.e. as $s\rightarrow 0$ when $(I+ \left(-\triangle\right)^{\gamma/2})f\in L^2(\R^n) $ has been resolved recently in {\cite{DGL, DZ}}. 
When $\gamma >\frac{n}{2(n+1)}$ and $(I+ \left(-\triangle\right)^{\gamma/2})f\in L^2(\R^n) $ it is known that $\lim_{s\rightarrow 0^+}e^{-is\triangle}f= f$ a.e. This result is sharp except at the end-point $\gamma=\frac{n}{2(n+1)}$ for $n\geq 2$, see {\cite{DGL, DZ}}. When $n=1$ it was proved that $e^{-is\triangle}f\rightarrow f$ a.e. as $s\rightarrow 0^+$ if and only if $\gamma\geq \frac{1}{4}$, see \cite{DK2,C}.

We consider the bilinear fractional Schr\"{o}dinger multiplier operator defined by   \begin{equation}\label{fracshrodinger}
	\mathcal T_{m_\beta,s}(f,g)(x)=\int_{\R^n}\int_{\R^n}m_{\beta}(s^{2}|(\xi,\eta)|^2)\hat{f}(\xi)\hat{g}(\eta)e^{2\pi i x\cdot(\xi+\eta)} d\xi d\eta.\end{equation}
Note that  $\mathcal T_{m_\beta,s}(f,g)$ solves the fractional Schr\"{o}dinger equation when the initial data is $f(\cdot)g(\cdot)$. We are concerned with the problem of convergence of 
$\mathcal T_{m_\beta,s}(f,g)$ a.e. to $f(\cdot)g(\cdot)$ when $(I+ \left(-\triangle\right)^{\gamma/2})f,(I+ \left(-\triangle\right)^{\gamma/2})g \in L^2(\R^n)$ for some $\gamma$.  In order to address this problem we   establish the following result for the associated maximal function.
 \begin{theorem}\label{schrodinger}
	Let $\mathcal T_{m_\beta,s}(f,g)$ be as defined in \eqref{fracshrodinger}. If $\beta\left(\alpha_*(p_1,p_2)+1\right)<\gamma$ then \begin{equation}\label{fracshrodinger2}\|\sup_{0<s<1}|\mathcal T_{m_\beta,s}(f,g)|\|_{p}\lesssim \|(I -\triangle)^{\gamma}f\|_{p_1} \|(I -\triangle)^{\gamma}g\|_{p_2}.\end{equation}	
	Consequently, we get that $\mathcal T_{m_\beta,s}(f,g)(x)\rightarrow f(x) g(x)$ as $s\rightarrow 0$ for a.e. $x$ whenever the right hand side of ~\eqref{fracshrodinger2} is finite.
\end{theorem}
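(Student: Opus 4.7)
The plan is to combine the pointwise inequality~\eqref{app2} with the $L^p$ boundedness of $\G^{\alpha-1}$ provided by Theorem~\ref{maintheorem:sqr}. Since $m_\beta(u)=e^{iu^\beta}$ is neither compactly supported nor does $m_\beta/u^\gamma$ belong to $L^2_\alpha$ globally, I would first fix a smooth partition of unity $1=\phi_0(u)+\sum_{k\geq 1}\phi(2^{-k}u)$ with $\phi\in C_c^\infty([1/2,2])$ and $\phi_0\in C_c^\infty([0,2])$, and write $m_\beta = m_{\beta,0}+\sum_{k\geq 1}m_{\beta,k}$ where $m_{\beta,k}(u)=e^{iu^\beta}\phi(2^{-k}u)$ for $k\geq 1$. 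The low-frequency piece $m_{\beta,0}$ produces a smooth, radial, compactly supported bilinear multiplier whose inverse Fourier transform is a Schwartz function on $\R^{2n}$; a direct convolution estimate then gives $\sup_{0<s<1}|\mathcal{T}_{m_{\beta,0},s}(f,g)(x)|\lesssim Mf(x)\,Mg(x)$ uniformly in $s$, which is more than enough for that piece.

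For each $k\geq 1$ I would apply~\eqref{app2} to $m_{\beta,k}$. The dominant contribution to the $\alpha$-th derivative of $t\mapsto m_{\beta,k}(t)/t^{\gamma+1}$ comes from all $\alpha$ derivatives falling on the oscillating phase $e^{it^\beta}$, which produces a factor of order $t^{\alpha(\beta-1)}$; combined with the $t^{-\gamma-1}$ weight, this gives
\[
\bigl|\,t^{\alpha+1}\tfrac{d^\alpha}{dt^\alpha}[m_{\beta,k}(t)/t^{\gamma+1}]\,\bigr|\lesssim t^{\alpha\beta-\gamma}\qquad\text{for }t\sim 2^k,
\]
so that $\|m_{\beta,k}(\cdot)/(\cdot)^\gamma\|_{L^2_\alpha}\lesssim 2^{k(\alpha\beta-\gamma)}$. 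Consequently~\eqref{app2} yields
\[
\sup_{0<s<1}|\mathcal{T}_{m_{\beta,k},s}(f,g)(x)|\;\lesssim\;2^{k(\alpha\beta-\gamma)}\,\G^{\alpha-1}\bigl((-\Delta_{\R^{2n}})^\gamma(f\otimes g)\bigr)(x,x).
\]
The hypothesis $\beta(\alpha_*(p_1,p_2)+1)<\gamma$ leaves room to pick $\alpha$ just above $\alpha_*(p_1,p_2)+1$ satisfying both $\alpha>\alpha_*(p_1,p_2)+1$ (so Theorem~\ref{maintheorem:sqr} applies to $\G^{\alpha-1}$) and $\alpha\beta<\gamma$ (so the geometric series $\sum_k 2^{k(\alpha\beta-\gamma)}$ converges).

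The step I expect to be the main obstacle is converting the coupled $2n$-dimensional Laplacian $(-\Delta_{\R^{2n}})^\gamma(f\otimes g)$ into individual Bessel potentials. My plan is to factor
\[
(-\Delta_{\R^{2n}})^\gamma(f\otimes g)\;=\;T_\sigma\bigl((I-\Delta)^\gamma f,\,(I-\Delta)^\gamma g\bigr),\qquad \sigma(\xi,\eta)=\frac{|(\xi,\eta)|^{2\gamma}}{(1+|\xi|^2)^\gamma(1+|\eta|^2)^\gamma},
\]
and show that the composition $(f,g)\mapsto \G^{\alpha-1}(T_\sigma(f,g))(x,x)$ still maps $L^{p_1}\times L^{p_2}$ into $L^p$. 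Since $\sigma$ is uniformly bounded and, away from the origin of $\R^{2n}$, satisfies the Coifman--Meyer type bounds $|\partial_\xi^a\partial_\eta^b\sigma(\xi,\eta)|\lesssim(|\xi|+|\eta|)^{-|a|-|b|}$, a Littlewood--Paley decomposition of $F=(I-\Delta)^\gamma f$ and $G=(I-\Delta)^\gamma g$ reduces the task to the bilinear Calder\'on--Zygmund framework of Section~\ref{sec:critical}; any low-frequency contribution where $\sigma$ has only H\"older regularity can be absorbed by the same $Mf\cdot Mg$ bound as in the $k=0$ term.

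Finally, once the maximal inequality~\eqref{fracshrodinger2} is in hand, the almost everywhere convergence $\mathcal{T}_{m_\beta,s}(f,g)(x)\to f(x)g(x)$ follows by the standard density argument: for Schwartz $f,g$ dominated convergence on the Fourier side (via $m_\beta(s^2|(\xi,\eta)|^2)\to 1$) gives pointwise convergence, and this is upgraded to a.e.\ convergence for all $(f,g)$ with $(I-\Delta)^\gamma f\in L^{p_1}$, $(I-\Delta)^\gamma g\in L^{p_2}$ by approximating in the Bessel potential norm and using the maximal bound to control the tails.
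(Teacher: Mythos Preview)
Your overall skeleton --- dyadic split of $m_\beta$, the $L^2_\alpha$ estimate $\|m_{\beta,k}(\cdot)/(\cdot)^\gamma\|_{L^2_\alpha}\lesssim 2^{k(\alpha\beta-\gamma)}$, choice of $\alpha$ just above $\alpha_*(p_1,p_2)+1$, and the density argument at the end --- is exactly the paper's strategy. Two points, however, diverge in substance.

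\textbf{Low-frequency piece.} The multiplier $m_{\beta,0}(|(\xi,\eta)|^2)=e^{i|(\xi,\eta)|^{2\beta}}\phi_0(|(\xi,\eta)|^2)$ is \emph{not} smooth at the origin of $\R^{2n}$ when $2\beta$ is not an even integer (e.g.\ $\beta<1$ gives only H\"older regularity), so its inverse Fourier transform is not Schwartz. The paper handles this by writing $e^{i|\xi|^{2\beta}}-1$ as a dyadic sum near the origin and integrating by parts to get the decay $|x|^{-(2n+\beta')}$ for some $\beta'>0$; only after that does $\sup_s|\mathcal T_{m_{\beta,1},s}(f,g)|\lesssim Mf\cdot Mg$ follow.

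\textbf{Decoupling $(-\Delta_{\R^{2n}})^\gamma(f\otimes g)$.} This is where your plan has a genuine gap. The symbol $\sigma(\xi,\eta)=|(\xi,\eta)|^{2\gamma}/[(1+|\xi|^2)^\gamma(1+|\eta|^2)^\gamma]$ is \emph{not} Coifman--Meyer: take $|\xi|\sim 1$ and $|\eta|\to\infty$; then $\sigma\sim 1$ but $\partial_{\xi_j}\sigma$ contains the term $-\sigma\cdot 2\gamma\xi_j/(1+|\xi|^2)$, which stays of order $1$, violating the required $(|\xi|+|\eta|)^{-1}$ bound. Even waiving this, your appeal to ``the bilinear Calder\'on--Zygmund framework of Section~\ref{sec:critical}'' does not help, since that section only treats $\alpha>n-\tfrac12$, far above the range $\alpha\approx\alpha_*(p_1,p_2)+1$ you need. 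The paper avoids the whole issue by first taking $\gamma$ to be a nonnegative \emph{integer} $m$, where the binomial identity
\[
(-\Delta_{\R^{2n}})^{m}(f\otimes g)=\sum_{\mu_1+\mu_2\le m}c(m,\mu_1,\mu_2)\,(-\Delta)^{\mu_1}f\otimes(-\Delta)^{\mu_2}g
\]
turns the right-hand side of~\eqref{app2} into a finite sum of genuine bilinear square functions $\G^{\alpha-1}\bigl((-\Delta)^{\mu_1}f,(-\Delta)^{\mu_2}g\bigr)$, to which Theorem~\ref{maintheorem:sqr} applies directly. This gives the estimate at $\gamma_0=m$ and $\gamma_1=m+1$ with constants $2^{j(\alpha\beta-\gamma_k)}$, and then standard complex interpolation in the Bessel-potential scale fills in $m<\gamma<m+1$. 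Replacing your $\sigma$-factorisation by this integer-plus-interpolation step closes the argument.
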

\begin{remark} Since $\alpha_*(2,2)=0$, Theorem \ref{schrodinger} implies that $\mathcal T_{m_\beta,s}(f,g)(x)\rightarrow f(x)g(x)$ a.e. for any $\gamma>\beta$ provided $(I -\triangle)^{\gamma}f,(I -\triangle)^{\gamma}g\in L^2(\R^n)$. Note that when $\beta=1$ we have $m_{1}\left(|(\xi,\eta)|^2\right)=e^{i|\xi|^2}e^{i|\eta|^2}$. In this case we can directly use the result from the linear theory to prove the estimate  \eqref{fracshrodinger2} for $\gamma>\frac{n}{2(n+1)}$. When $\beta\neq 1$ the results of the type \eqref{fracshrodinger2} are new in the bilinear setting and Theorem~\ref{schrodinger} provides us with a range of $\beta$ for which the results holds. However, the problem of finding an optimal regularity for $f,g$ for which \eqref{fracshrodinger2} holds needs to be investigated further.  
\end{remark}
\subsection{General bilinear radial multipliers}\label{sec:brm}
Let $m:\R^{2n}\rightarrow \C$ be a bounded measurable function. 
Let $\mathcal T_{m,s}, s>0$ be the corresponding bilinear multiplier operator defined as $$\mathcal T_{m,s}(f,g)(x)=\int_{\R^{n}}\int_{\R^{n}} m(s\xi,s\eta)\hat{f}(\xi)\hat{g}(\eta)e^{2\pi i x\cdot(\xi+\eta)}d\xi d\eta.$$ 
In the study of bilinear multiplier operators the lack of Plancherel theorem poses a big difficulty. In \cite{GHH}, Grafakos, Honzik and He obtained some sufficient conditions on $m$ so that the corresponding bilinear maximal function  $\mathcal T^*_m(f,g):=\sup_{s>0}|\mathcal T_{m,s}(f,g)|$ is bounded from $L^2(\R^n)\times L^2(\R^n)\rightarrow L^1(\R^n)$. To be precise, they proved that if $m\in C^{\infty}(\R^{2n})$ and satisfies 
\begin{eqnarray*}
	|\partial^{\beta}m(\xi,\eta)|\leq C_{\beta}|(\xi,\eta)|^{-a},~~\forall~~ |\beta|\leq [\frac{n}{2}]+2,
\end{eqnarray*}
where $[\frac{n}{2}]$ is the integer part of $\frac{n}{2}$ and $a>\frac{n}{2}+1,$ then $\mathcal T^*_m$ is a bounded operator from $L^2(\R^n)\times L^2(\R^n)\rightarrow L^1(\R^n)$. Note that the condition as above is dependent on the dimension. Here we provide an improved sufficient condition for the case of bilinear radial multipliers. In doing so we make use of the bilinear Bochenr-Riesz square function. 

Let $m_0:[0,\infty)\rightarrow \C$ be a bounded measurable function and consider $m(\xi,\eta)=m_0(|(\xi,\eta)|^2)$ a radial function on $\R^{2n}$. Let $\mathcal T^*_m(f,g)$ denote the bilinear maximal function associated with $m(\xi,\eta)=m_0(|(\xi,\eta)|^2)$ defined as above. Observe that if $m_0$ is a smooth function on $[0,\infty)$, it is easy to see that $m(\xi,\eta):=m_0(|(\xi,\eta)|^2)$ is also a smooth function on $\R^{2n}$. 
Let $\varphi,\phi:(0,\infty)\rightarrow \C$ be compactly supported smooth functions such that  $\sum_{j\geq 1}\varphi(2^{-j}x)+\phi(x)\equiv 1$ on $[0,\infty)$ and $\supp (\varphi)\subset [1/2,2]$ and $\supp (\phi)\subset[0,3/2]$. We establish the following result concerning sufficient condition on $m_0$ so that $\mathcal T^*_m$ is bounded from $L^2(\R^n)\times L^2(\R^n)\rightarrow L^1(\R^n)$. 
%We will always assume $m_0$ is a smooth function on $[0,\infty)$. 
\begin{theorem}\label{bilinearradial}Let $m_0:[0,\infty)\rightarrow \C$ be a smooth function on $[0,\infty)$. Let $$m_{j}(t):= m_0(t)\varphi\left(2^{-j}t\right), t\geq 0$$ and there exists $\epsilon>0$ and $\beta>1$ such that $$\|m_j\|_{L^2_{\beta}}\leq C 2^{-j\epsilon}$$ for all $j\geq 1$ with $C$ independent of $j$.  In particular, the operator $\mathcal T^*_{m}$ extends as a bounded operator from $L^2(\R^n)\times L^2(\R^n)\rightarrow L^1(\R^n)$.
\end{theorem}
 Indeed, the idea of the proof of Theorem ~\ref{bilinearradial} yields the following result in terms of the derivatives of $m_0$.  
\begin{theorem}\label{classicalmultiplier}
	Let $m_0:[0,\infty)\rightarrow \C$ be a smooth function such that 
	$$|m_0^k(t)|\lesssim t^{-k-\epsilon}~~\text{as}~ t\rightarrow \infty$$ 
	for some $\epsilon>0$ and $k=0,1,2$. Here $m_0^k$ denotes the $k$th derivative of $m_0$ Then $\mathcal T_m^*$ is bounded from $L^2(\R^n)\times L^2(\R^n)\rightarrow L^1(\R^n)$.	
\end{theorem}
Theorem~\ref{bilinearradial} can be applied to deduce $L^p$ estimates for the generalized bilinear spherical maximal function, which is defined as follows. \\

For $n\geq 1$ define the generalized bilinear spherical means by 
$$\mathcal S_{\omega_{\mu},s}(f,g)(x)=\int_{\R^n}\int_{\R^n}\omega_{\mu}(s^2|(\xi,\eta)|^2)\widehat{f}(\xi)\widehat{g}(\eta)e^{2\pi ix.(\xi+\eta)} d\xi d\eta,$$
where 
$$\omega_{\mu}(t)= 2^{\mu+n-1}\Gamma(\mu+n+1)\frac{J_{\mu+n}(|t|
	^{\frac{1}{2}})}{|t|^{\frac{1}{2}(\mu +n)}}$$
for $\mu\in\C$ such that $\mu\neq  -n-1, -n-2,...-n-k,...$. Here $J_{\mu}$ denotes the Bessel function of first kind of order $\mu$. 

The generalised bilinear spherical maximal function is defined by 
$$\mathcal S^*_{\omega_{\mu}}(f,g)(x)=\sup_{s>0}|\mathcal S_{\omega_{\mu},s}(f,g)(x)|.$$

Note that when $\beta=-1$ the operator $\mathcal S^*_{\omega_{\beta}}$ is the bilinear spherical maximal function 
$$\mathcal S^*_{\omega_{-1}}(f,g)(x)=\mathcal M_S(f,g)(x)=\sup_{s>0}|\int_{S^{2n-1}}f(x-sy)g(x-sz) d\sigma(y,z)|,$$
where $d\sigma$ is the normalised Lebesgue measure on the unit sphere $S^{2n-1}\subset \R^{2n}.$ We refer to ~\cite{Christ,JL} for results on the bilinear spherical maximal function. 

As a consequence of Theorem~\ref{bilinearradial} we get the following result. 
\begin{theorem}\label{generalized}
	For $\mu> -n+\frac{1}{2}$, we have 
	\begin{equation*}\label{bspher}
		\|\mathcal S^*_{\omega_{\mu}}(f,g)\|_1\lesssim \|f\|_{2}\|g\|_{2}\end{equation*}
\end{theorem}
Observe that the theorem above also includes the bilinear spherical maximal function for $n\geq 2$. 
We will skip the proof of the theorem above. It may be completed using the asymptotic expansion of the Bessel functions along with Theorem~\ref{bilinearradial}. 

%%%%%%%%%%%%%% 
\section{Proof of Theorems~\ref{GBBR} and \ref{schrodinger}}\label{appl}
The methods developed in \cite{JL} and \cite{JS} for studying the boundedness of the maximal bilinear Bochner-Riesz means (i.e. when $\lambda=2$) do not apply directly to deduce the $L^p$ estimates for $\mathcal{B}_{\lambda,*}^{\alpha}$ for $\lambda\neq 2$. Therefore, the role of  bilinear square function is crucial here. 
We will establish a pointwise relation between the maximal function $\mathcal{B}_{\lambda,*}^{\alpha}$  and square function $\G^{\alpha}$. In particular, we show that 
 \begin{eqnarray}\label{GBBR2}\mathcal{B}^{\alpha}_{\lambda,*}(f,g)(x)\lesssim_{\lambda,\alpha,\beta}\mathcal{G}^{\beta-1}(f,g)(x) + Mf(x)Mg(x)~~\text{ for~ a.~e. ~} ~~x\in \R^n,
 	\end{eqnarray}
 where $\alpha-\beta+1/2 >0, \beta>1/2$ and $\lambda>0$. Observe that invoking theorem \ref{maintheorem:sqr} regarding the boundedness of bilinear square function, the inequality above yields the desired estimate for $\mathcal{B}^{\alpha}_{\lambda,*}$ in theorem \ref{GBBR}. In order to prove the inequality \eqref{GBBR2} we require the Riemann-Liouville formula (see~\cite{Car2} for details). 

\begin{lemma}\label{L}\cite{Car2} Let $h\in L^2(\R)$ and for $\beta \geq 0$ let $\widehat{\left(d/dt\right)^{\beta}h}(\nu)= (-2\pi i \nu)^{\beta}\hat{h}(\nu)$. Suppose that $\text{supp}(h)\subseteq (-\infty,a] $ and $\left(d/dt\right)^{\beta}h \in L^2(\R)$ for $\beta >\frac{1}{2}$. Then $\text{supp}(\left(d/dt\right)^{\beta}h) \subseteq (-\infty,a] $ and \begin{equation}\label{RLF}h(x)= c_{\beta} \int_x^{\infty}(t-x)^{\beta-1}\left(d/dt\right)^{\beta}h (t)dt,~~\text{for~a.e.~} x.\end{equation} 	
\end{lemma}
Let $m\in C_c^{\infty}((0,\infty))$. Applying Lemma~\ref{L} to  $M(u)=\frac{m(u)}{u}$ we can write 
$$m(u)=c_{\beta}\int_0^{\infty}\left(1-\frac{u}{t}\right)_+^{\beta-1}\frac{u}{t} t^{\beta}\left(d/dt\right)^{\beta}M (t)dt,$$
where $\beta >\frac{1}{2}$. Write $u=|(\xi,\eta)|^2$ in the formula above to get 
\begin{equation}\label{RLI}m(|(\xi,\eta)|^2)=c_{\beta}\int_0^{\infty}\left(1-\frac{|(\xi,\eta)|^2}{t}\right)_+^{\beta-1}\frac{|(\xi,\eta)|^2}{t} t^{\beta}\left(d/dt\right)^{\beta}M (t)dt.\end{equation}

Consider the bilinear multiplier operator given by  $$\mathcal T_{m,s}(f,g)(x)=\int_{\R^n}\int_{\R^n}m(s^{2}|(\xi,\eta)|^2)\hat{f}(\xi)\hat{g}(\eta)e^{2\pi i x.(\xi+\eta)} d\xi d\eta.$$
Observe that using the formula \eqref{RLI} we can rewrite $\mathcal T_{m,s}(f,g)$ as 
\begin{equation}\label{RLI2}
	\mathcal T_{m,s}(f,g)(x)=c_{\beta}\int_0^{\infty}\mathcal{K}^{\beta-1}_{\sqrt{t}/s}*(f,g)(x)t^{\beta}\left(d/dt\right)^{\beta}M (t)dt.
\end{equation}
Cauchy Schwarz inequality in the above yields 
\begin{equation}\label{app1}
	\sup_{s>0}\left|\mathcal T_{m,s}(f,g)(x)\right|\leq c_{\beta} \|m\|_{L^2_{\beta}}~ \mathcal{G}^{\beta-1}(f,g)(x),	
\end{equation}
where $\|m\|_{L^2_{\beta}}^2=\int_0^{\infty}|t^{\beta +1}\left(d/dt\right)^{\beta}M (t)|^2 t^{-1} dt$ and $\left(d/dt\right)^{\beta}M (t)$ is a distributional derivative of $M$ of order $\beta$. 

We make use of the analysis as above to prove Theorem~\ref{GBBR}. 
%Therefore, from~\eqref{app1} we can deduce $L^{p_1}(\R^n)\times L^{p_2}(\R^n)\rightarrow L^p(\R^n)$ boundedness of the maximal function $	\sup_{s>0}\left|\mathcal T_{m,s}(f,g)\right|$ from that of the square function  $\mathcal{G}^{\beta-1}(f,g)$.  

\subsection*{Proof of theorem \ref{GBBR}}
For convenience let us work with $2\lambda$ in place of $\lambda$. Let $m_j(|(\xi,\eta)|^2)=\psi\left(2^j\left(1-|(\xi,\eta)|^{2\lambda}\right)\right)$, where $\psi$ is a smooth compactly supported function on $[\frac{1}{2},2]$. We can write 
$$\mathcal{B}_{2\lambda,R}^{\alpha}(f,g)(x)=\mathcal{B}_{0,R}(f,g)(x)+\sum_{j\geq 1}2^{-j\alpha}\int_{\R^{2n}}m_j(R^{-2}|(\xi,\eta)|^2)\hat{f}(\xi)\hat{g}(\eta)e^{2\pi ix.(\xi+\eta)} d\xi d\eta,$$ where 
 $$\mathcal{B}_{0,R}(f,g)(x)=\int_{\R^{2n}}\left(1-\frac{|(\xi,\eta)|^{2\lambda}}{R^{2\lambda}}\right)_+^{\alpha}\varphi\left(\frac{|(\xi,\eta)|^{2\lambda}}{R^{2\lambda}}\right)\hat{f}(\xi)\hat{g}(\eta)e^{2\pi ix.(\xi+\eta)}d\xi d\eta$$ and $\varphi(x)+\sum_{j\geq 1}\psi\left(2^j\left(1-x\right)\right)\equiv 1$ on $[0,1)$.
 
 Let $\mathcal{B}_j(f,g)(x)=\sup_{R>0}\left|\int_{\R^{2n}}m_j(R^{-2}|(\xi,\eta)|^2)\hat{f}(\xi)\hat{g}(\eta)e^{2\pi ix.(\xi+\eta)} d\xi d\eta\right|$. For $j\geq 1$ we claim that 
 $$\mathcal{B}_j(f,g)(x)\lesssim 2^{j\beta-j/2}~ \mathcal{G}^{\beta-1}(f,g)(x)~~\text{
 for~ any}~\beta>1/2.$$ 
This follows from the inequality \eqref{app1} along with the estimate $\|m_j\|_{L^2_{\beta}} \simeq 2^{j\beta-j/2}$.
 For, observe that when $\beta=0$ we have that $\|m_j\|_{L^2_{0}}^2=\int_0^{\infty}|\psi(2^j(1-t^{2\lambda}))|^2 t^{-1} dt\simeq 2^{-j}$. Further, when $\beta=1$ we can easily verify that $\|m_j\|_{L^2_{1}}^2\simeq 2^{j}$. Therefore, interpolating between $\beta=0$ and $\beta=1$ we get the estimate for $0<\beta<1$. In fact, the same argument yields the desired bound for any $\beta>0$.
 
 Next, we claim that  $$\sup_{R>0}|\mathcal{B}_{0,R}(f,g)(x)|\lesssim {M}f(x){M}g(x)~~\text{for~every}~ \lambda>0.$$ 
 
 %Recall that $\mathcal{B}_{0,R}(f,g)(x)=\int_{\R^{2n}}\left(1-\frac{|(\xi,\eta)|^{2\lambda}}{R^{2\lambda}}\right)_+^{\alpha}\varphi\left(\frac{|(\xi,\eta)|^{2\lambda}}{R^{2\lambda}}\right)\hat{f}(\xi)\hat{g}(\eta)e^{2\pi ix.(\xi+\eta)}d\xi d\eta$. 
Note that $\varphi\equiv 1$ in $[0,\delta]$ for some $\delta>0$ and $\text{supp} (\varphi)\subset[0,1/2]$. 
Let $\rho$ be a smooth function on $\R^d$ supported on $\xi:1/2\leq |\xi|\leq 2 $ such that $\sum_{j\geq 0}\rho(2^j \xi)\equiv 1$ on $\xi:0<|\xi\leq 1|$.
Consider the kernel  
$$K(x)=\int_{\R^d}\left(1-|\xi|^{2\lambda}\right)_+^{\alpha}\varphi(|\xi|^{2\lambda})e^{2\pi ix.\xi}d\xi.$$ 
Let $\rho\in C^{\infty}\left([1/2,1]\right)$ and write 
 $$\tilde{K}(x):=\sum_{j\geq 0}\int_{\R^d}\left((1-|\xi|^{2\lambda})_+^{\alpha}-1\right)\rho(2^{j}\xi)\varphi(|\xi|^{2\lambda})e^{2\pi ix.\xi}d\xi=\sum_{j\geq 0}\tilde{K}_j(x).$$
 %Let $\tilde{K}_j(x)=\int_{\R^d}\left((1-|\xi|^{2\lambda})_+^{\alpha}-1\right)\rho(2^{j}|\xi|^2)\varphi(|\xi|^{2\lambda})e^{2\pi ix.\xi}d\xi$. 
 We need to estimate $\tilde{K}_j$ for large $j$. When $j$ is large we have that  $\rho(2^{j}\xi)\varphi(|\xi|^{2\lambda})=\rho(2^{j}\xi)$.
 
 Using power series expansion of $(1-t)^{\alpha}, 0\leq t\leq \delta<1$ around $0$ we can write $$(1-|\xi|^{2\lambda})_+^{\alpha}-1=\sum_{k\geq 1}\frac{(-1)^k}{k!}\alpha (\alpha-1)..(\alpha-k+1)|\xi|^{2\lambda k}.$$ It is easy to verify that the integration by parts argument gives us $$|\tilde{K}_j(x)|\lesssim 2^{-j\lambda}2^{-jd}\left(1+2^{-j}|x|\right)^{-d-1}.$$ 
 Finally, from the definition $K(x)-\tilde{K}(x)=\int_{\R^d}\varphi(|\xi|^{2\lambda})e^{2\pi ix.\xi}d\xi.$ 
 Since $\varphi(|\xi|^{2\lambda})\equiv 1$ near $0$, using integration by parts again we get that $$|K(x)-\tilde{K}(x)|\lesssim_{\lambda} \left(1+|x|\right)^{-d-1}~~\text{for~~ any}~~\lambda>0.$$
 This completes the proof. 
 \qed
 
%Consequently, we get the following result for $\mathcal{B}^{\alpha}_{*,\lambda}(f,g)$ for any $\lambda>0$ as a corollary. 
\subsection*{Proof of theorem \ref{schrodinger}}
Let $\varphi \in C_c^{\infty}(\R)$ be an even function such that $\varphi\equiv 1$ in a neighbourhood of the origin. Write 
$m_{\beta}(u)= m_{\beta,1}(u) +m_{\beta,2}(u),$
where $m_{\beta,1}(u)=m_{\beta}(u)\varphi(u)$. 
Note that we can realise these as functions on $\R^{2n}$ by putting $u=|\xi,\eta|^2$ for $\xi,\eta\in \R^n$. The $2n-$dimensional Fourier transform is given by 
$$\widehat{m_{\beta,1}}(x)=\int_{\R^{2n}}m_{\beta,1}(|(\xi,\eta)|^2)e^{2\pi ix\cdot(\xi+\eta)}d\xi d\eta.$$
We will show the following estimate for $\widehat{m_{\beta,1}}$.  
\begin{eqnarray}\label{app:sch1}|\widehat{m_{\beta,1}}(x)|\lesssim_{\beta} |x|^{-(2n+1)} ~~\text{for}~~|x|~~\text{large}~~ \text{and}~ 0<\beta.
\end{eqnarray}
Consider $$\widehat{m_{\beta,1}}(x)=\int_{\R^{d}}\left(e^{i|\xi|^{2\beta}}-1\right) \varphi\left(|\xi|^2\right)e^{2\pi ix.\xi} d\xi +\int_{\R^{d}}\varphi\left(|\xi|^2\right)e^{2\pi ix.\xi} d\xi .$$
It is easy to verify that  
$$\left|\int_{\R^{d}}\varphi\left(|\xi|^2\right)e^{2\pi ix.\xi} d\xi\right| \lesssim (1+|x|)^{-2n-\beta'}~~\text{for~ any}~~\beta'>0.$$
Next, for the remaining part we perform a partition of unity argument and consider the integrals
$$I_j(x)=\int_{\R^{d}}(e^{i|\xi|^{2\beta}}-1) \phi(2^j\xi)e^{2\pi ix.\xi} d\xi$$
where $\phi$ is a smooth function supported in $1/4\leq |\xi|\leq 4$ and $\sum_{j\geq 0}^{\infty}\phi(2^j\xi)= 1$ on $0<|\xi|\leq 2$.
Note that for $j$ large enough, we know that $\varphi(|\xi|^2)\phi(2^j \xi)=\phi(2^j \xi)$. Therefore, the desired estimate for $\widehat{m_{\beta,1}}(x)$ follows from suitable estimate on $I_j(x)$ for $j$ large. 

Applying a change of variable argument we get that  $$I_j(x)=2^{-jd}\int_{\R^{d}}\left(e^{i(2^{-j}|\xi|)^{2\beta}}-1\right) \phi(\xi)e^{2\pi i2^{-j}x.\xi} d\xi .$$ 
Note that for $j$ large, $2^{-2j\beta}|\xi|^{2\beta}$ is very small on the support of $\phi$ which in turn implies that $\left(e^{i(2^{-j}|\xi|)^{2\beta}}-1\right)$ is very small. Write $e^{i(2^{-j}|\xi|)^{2\beta}}=\sum_{k=0}^{\infty}\frac{1}{k!}(2^{-j}|\xi|)^{2k\beta}$ and use integration by parts argument to get that  $$\left|\int_{\R^{d}} (2^{-j}|\xi|)^{2k\beta}\phi(\xi)e^{2\pi i2^{-j}x.\xi} d\xi\right|\leq c(k,d,\beta)2^{-2jk\beta} 2^{-jd}(1+2^{-j}|x|)^{-d-1},$$ where the constant $c(k,d,\beta)$ is at most a polynomial in $k$ of a fixed degree  for all $k\geq 1$. This estimate gives us 
$$|I_j(x)|\leq 2^{-2j\beta}2^{-jd}\left(1+2^{-j}|x|\right)^{-d-1}.$$
Summing over $j$ implies that 
%$|\widehat{m_{\beta,1}}(x)|\lesssim |x|^{-2n-\beta'}$ as $|x|\rightarrow \infty$. 
 
$$\sup_{s>0}|\mathcal T_{m_{\beta,1},s}(f,g)(x)|\lesssim Mf(x)Mg(x).$$

Next, we show that  $\|m_{\beta,2}(\cdot)/(\cdot)^{\gamma}\|_{L^2_{\alpha}}<\infty$ for $\gamma>\beta \alpha$.
Let $\Phi$ is a smooth function supported in $[1,2]$ such that \[\sum_{j\geq 0}\Phi(2^{-j}u)\equiv 1,~~\text{on}~ |u|\geq 1.\] Observe that it suffices to obtain the required estimate on the $L^2_{\alpha}$ norm of  $m^j_{\beta,2}(u)=m_{\beta,2}(u)\phi(2^{-j}u)$ for large $j$. Consider 

\begin{eqnarray*}	\|m^j_{\beta,2}\|^2_{L^2_{\alpha}}
	&=&\int_0^{\infty}u^{2\alpha +1}\left|\left(\frac{d}{du}\right)^{\alpha}\left(\frac{e^{i |\cdot|^{\beta}}}{|\cdot|^{\gamma +1}}\Phi(2^{-j}\cdot)\right)\right|^2 du\\
	%&=&	2^{-2j(\gamma+1)}2^{-2j\alpha}\int_0^{\infty}u^{2\alpha +1}\left|\left(\frac{d}{du}\right)^{\alpha}\left(\frac{e^{i 2^{j\beta}|\cdot|^{\beta}}}{|\cdot|^{\gamma +1}}\Phi(\cdot)\right)(2^{-j}u)\right|^2 du\\
	&=&2^{-2j\gamma}\int_0^{\infty}u^{2\alpha +1}\left|\left(\frac{d}{du}\right)^{\alpha}\left(e^{i 2^{j\beta}|\cdot|^{\beta}}\widetilde{\Phi}(\cdot)\right)(u)\right|^2 du, \end{eqnarray*} 
where $\widetilde{\Phi}(u)=\frac{\phi(u)}{|u|^{\gamma +1}}$. 

Note that $\widetilde{\Phi}$ is a smooth function supported in $[1,2]$. 
We know that the support of $\left(\frac{d}{du}\right)^{\alpha}\left(e^{i 2^{j\beta}|\cdot|^{\beta}}\widetilde{\Phi}(\cdot)\right)(u)$ is contained in $(-\infty,2]$ when $\alpha>\frac{1}{2}$. Therefore, it is enough to estimate  
\[J_{\alpha}=\int_{\R}\left|\left(\frac{d}{du}\right)^{\alpha}\left(e^{i 2^{j\beta}|\cdot|^{\beta}}\widetilde{\Phi}(\cdot)\right)(u)\right|^2 du.\]

Interpolation between integral values of $\alpha$ gives us that $J_{\alpha}\lesssim 2^{2j\alpha\beta}$. Combining this with the estimates above we get 
\begin{equation}\label{mj}\int_0^{\infty}u^{2\alpha +1}\left|\left(\frac{d}{du}\right)^{\alpha}\left(\frac{e^{i |\cdot|^{\beta}}}{|\cdot|^{\gamma +1}}\Phi(2^{-j}\cdot)\right)\right|^2 du\lesssim 2^{2j(\alpha\beta-\gamma)}.
\end{equation}
This yields that $\|m_{\beta,2}(\cdot)/(\cdot)^{\gamma}\|_{L^2_{\alpha}}<\infty$ when $\gamma>\beta \alpha$.

Next, note that when $\gamma$ is an integer, using binomial expansion we can write  $$\left(-\triangle\right)^{\gamma}(f\otimes g)(x,y)= \sum_{0\leq \mu_1 +\mu_2\leq \gamma}c(\gamma,\mu_1,\mu_2)(-\triangle)^{\mu_1}f(x)(-\triangle)^{\mu_2}g(y).$$ 
Using the estimate \eqref{mj} for $\gamma_0=m$ and $\gamma_1=m+1, m\geq 0,$ we get
$$\sup_{0<s<1}\left|\mathcal T_{m^j_{\beta},s}(f,g)(x)\right| \lesssim 2^{j(\alpha\beta-\gamma_k)}\sum_{0\leq \mu_1 +\mu_2\leq \gamma_k}c(\gamma_k,\mu_1,\mu_2) \left(\mathcal G^{\alpha-1}((-\triangle)^{\mu_1}f,(-\triangle)^{\mu_2}g)(x)\right)$$
where $k=0,1$ and $m\geq 0$.

Invoking Theorem \ref{maintheorem:sqr} we get that 
$$\|\sup_{0<s<1}|\mathcal T_{m^j_{\beta},s}(f,g)|\|_{p}\lesssim 2^{j(\alpha\beta-\gamma_k)}\|(I -\triangle)^{\gamma_k}f\|_{p_1} \|(I -\triangle)^{\gamma_k}g\|_{p_2}$$
for $k=0,1$ and $\alpha>\alpha_*(p_1,p_2)+1$.

An interpolation argumnet (see Theorem 6.4.5 on page 152 and Theorem 4.4.1 on page 96 in \cite{BL}) yields  $$\|\sup_{0<s<1}|\mathcal T_{m^j_{\beta},s}(f,g)|\|_{p}\lesssim 2^{j(\alpha\beta-\gamma)}\|(I -\triangle)^{\gamma}f\|_{p_1} \|(I -\triangle)^{\gamma}g\|_{p_2}$$
for any $m<\gamma<m+1,m\geq 0$ and $j\geq 1$.

When $\gamma>\alpha\beta$ summing over $j$ gives us 
$$\|\sup_{0<s<1}|\mathcal T_{m_{\beta},s}(f,g)|\|_{p}\lesssim \|(I -\triangle)^{\gamma}f\|_{p_1} \|(I -\triangle)^{\gamma}g\|_{p_2},$$ 
where $\alpha>\alpha_*(p_1,p_2)+1$. 
\subsection*{Proof of Theorem~\ref{bilinearradial}}
Recall that we need to prove the required estimates for bilinear maximal functions associated with the following operators 
%By using the partition of identity on $[0,\infty)$ we can write $$m_0(t)=\phi(t)m_0(t)+\sum_{j\geq 1}m_0(t)\varphi\left(2^{-j}t\right).$$
%Consider the operator  
$$\mathcal T_j^s(f,g)(x)=\int_{\R^{2n}}m_j(s^2|(\xi,\eta)|^2)\hat{f}(\xi)\hat{g}(\eta)e^{2\pi ix\cdot(\xi+\eta)}d\xi d\eta ~~\text{for}~~ j\geq 1~~\text{and}$$
$$\mathcal T_0^s(f,g)(x)=\int_{\R^{2n}}m_0(s^2|(\xi,\eta)|^2)\phi(s^2|(\xi,\eta)|^2)\hat{f}(\xi)\hat{g}(\eta)e^{2\pi ix\cdot(\xi+\eta)}d\xi d\eta.$$ 
First, observe that $m_0(|(\xi,\eta)|^2)\phi(|(\xi,\eta)|^2)$ is a compactly supported smooth function on $\R^{2n}$. Therefore, the corresponding maximal function $\sup_{s>0}|\mathcal T_0^s(f,g)|$ can be dominated by ${M}f(x){M}g(x)$ pointwise a.e. and hence the desired $L^p$ estimate follows. 

Next, we will show that $\sup_{s>0}|\mathcal T_j^s(f,g)|$ extends to a bounded operator on $L^2(\R^n)\times L^2(\R^n)\rightarrow L^1(\R^n)$ and its norm is bounded by $\|m_j\|_{L^2_{\beta}}$ for each $j\geq 1$. 
From the inequality \eqref{app1} we see that $$\sup_{s>0}|\mathcal T_j^s(f,g)(x)|\lesssim \|m_j\|_{L^2_{\beta}} \mathcal{G}^{\beta-1}(f,g)(x)~\text{ for~ a.e.} ~x.$$ 
Using the boundedness of bilinear Stein's square function  (see Theorem \ref{maintheorem:sqr}) we know that $\mathcal{G}^{\beta-1}(f,g)$ is bounded from $L^2(\R^n)\times L^2(\R^n)\rightarrow L^1(\R^n)$ for any $\beta>1$. The given criteria on $m_j$ allows us to sum the R.H.s. in the above inequality to arrive at the conclusion. This completes the proof.
\qed
\subsection*{Proof of Theorem~\ref{classicalmultiplier}}
The proof follows along the same lines as the theorem above. We will borrow the notation from the above theorem. Again it is easy to check that $\mathcal T_0^*$ is bounded from $L^2(\R^n)\times L^2(\R^n)\rightarrow L^1(\R^n)$. For the remaining part using \eqref{app1} and applying it for $\beta=2$ gives us the result. We skip the details here. \qed

\section{Proof of Theorem~\ref{maintheorem:sqr}}\label{sec:pqr}
 We decompose the bilinear Bochner-Riesz square function along the same lines as carried out in ~[Section 3, \cite{JS}]. This involves decomposing the bilinear Bochner-Riesz multiplier  $\left(1-\frac{|\xi|^2+|\eta|^2}{R^2}\right)^{\alpha}_+$ along the $\xi$ ad $\eta$- axes separately. We proceed as follows. 

Let $\psi\in C^\infty_0[\frac{1}{2},2]$ and $\psi_0\in C_0^{\infty}[-\frac{3}{4},\frac{3}{4}]$ be such that  
$$\sum_{j\geq 2}\psi(2^j(1-t))+\psi_0(t)=1~~\text{for~~all}~t\in [0,1).$$ 
This allows us to write 
\begin{eqnarray*}\label{decom}
	\widehat{\K_R^{\alpha}}(\xi,\eta)
	&=& \frac{|\xi|^2+|\eta|^2}{R^2}\left(1-\frac{|\xi|^2+|\eta|^2}{R^2}\right)_+^{\alpha}
	=\sum_{j\geq 2}\m^{\alpha}_{j,R}(\xi,\eta)+\m^{\alpha}_{0,R}(\xi,\eta),
\end{eqnarray*}
where for $j\geq 2$,  $$\m^{\alpha}_{j,R}(\xi,\eta)=\psi\left(2^j\left(1-\frac{|\xi|^2}{R^2}\right)\right)\frac{|\xi|^2+|\eta|^2}{R^2}\left(1-\frac{|\xi|^2}{R^2}\right)_+^{\alpha}\left(1-\frac{|\eta|^2}{R^2}\left(1-\frac{|\xi|^2}{R^2}\right)^{-1}\right)^{\alpha}_+$$ and 
$$\m^{\alpha}_{0,R}(\xi,\eta)=\psi_0\left(\frac{|\xi|^2}{R^2}\right)\frac{|\xi|^2+|\eta|^2}{R^2}\left(1-\frac{|\xi|^2+|\eta|^2}{R^2}\right)_+^{\alpha}.$$
This yields the following decomposition of the operator 
\begin{eqnarray*}\label{decom1}
	\g_R^{\alpha}(f,g)(x)=\sum_{j\geq 2}\g_{j,R}^{\alpha}(f,g)(x)+\g^{\alpha}_{0,R}(f,g)(x),
	\end{eqnarray*}
where 
$$\g^{\alpha}_{j,R}(f,g)(x)=\int_{\R^n}\int_{\R^n}\m^{\alpha}_{j,R}(\xi,\eta)\hat{f}(\xi)\hat{g}(\eta)e^{2\pi ix.(\xi+\eta)}d\xi d\eta, ~j\geq 2$$
and 
$$\g^{\alpha}_{0,R}(f,g)(x)=\int_{\R^n}\int_{\R^n}\m^{\alpha}_{0,R}(\xi,\eta)\hat{f}(\xi)\hat{g}(\eta)e^{2\pi ix.(\xi+\eta)}d\xi d\eta.$$
Consequently, we get that 
\begin{eqnarray}\label{firstdecom}	\G^{\alpha}(f,g)(x)\leq 	\G_0^{\alpha}(f,g)(x)+\sum_{j\geq 2}	\G_j^{\alpha}(f,g)(x)
	\end{eqnarray}
where 
\begin{eqnarray*}\label{gfunc2}
	\G_j^{\alpha}(f,g)(x)
	&=&\left(\int_{0}^{\infty}|\mathcal \g_{j,R}^{\alpha} (f,g)(x)|^2\frac{dR}{R}\right)^{\frac{1}{2}}, j=0, 2, 3, \dots
\end{eqnarray*}
Therefore, our job of proving $L^{p_1}(\R^n)\times L^{p_2}(\R^n)\rightarrow L^{p}(\R^n)$ boundedness of $\G^{\alpha}$ is reduced to obtaining the same for new square functions $\G_j^{\alpha}$ 
with $\| \G_j^{\alpha}\|_{L^{p_1}\times L^{p_2}\rightarrow L^{p}}\leq C_j$ such that $\sum\limits_{j\geq 2} C_j<\infty.$ We will address the problem of boundedness of $\G^{\alpha}_0$ and $\G^{\alpha}_j, j\geq 2$ separately.

\subsection*{Boundedness of $\G^{\alpha}_j, j\geq 2$:} 
%We further decompose the bilinear operator $\mathcal \g_{j,R}^{\alpha}$. 
%Let  $\varphi_R(\xi)=\left(1-\frac{|\xi|^2}{R^2}\right)_+.$
%	Now using an identity from Stein and Weiss [\cite{SW}, page 278] we have the following relation.
%	\begin{equation}\label{thankstostein}\left(1-\frac{|\eta|^2}{R^2\varphi_R(\xi)}\right)^{\alpha}_+=c_{\alpha}R^{-2\alpha}\varphi_R(\xi)^{-\alpha}\int_0^{R}\left(R^2\varphi_R(\xi)-t^2\right)_+^{\beta-1}t^{2\delta+1}\left(1-\frac{|\eta|^2}{t^2}\right)^{\delta}_+dt,\end{equation}
%	where $\beta>\frac{1}{2}$, $\delta>-\frac{1}{2}$ and $\beta+ \delta=\alpha.$ Moreover, the constant $c_{\alpha}$ is given by 
%	\begin{eqnarray}\label{constant:stein}
%	c_{\alpha}=2\frac{\Gamma(\delta+\beta+1)}{\Gamma(\delta+1)\Gamma(\beta)},
%	\end{eqnarray}
%where $\Gamma$ denotes the Gamma function. 

We invoke the decomposition of the bilinear Bochner-Riesz multiplier $\psi\left(2^j\left(1-\frac{|\xi|^2}{R^2}\right)\right)\left(1-\frac{|\xi|^2+|\eta|^2}{R^2}\right)_+^{\alpha}, \alpha>0$ from~[Section 3, \cite{JS}] to write that 
\begin{eqnarray}\label{decom3} \g^{\alpha}_{j,R}(f,g)(x)
	&=&\label{decomope}c_{\alpha}R^{-2\alpha}\int_0^{R_j}\left(B_{j,\beta}^{R,t}f(x)B_t^{\delta}g(x) +A_{j,\beta}^{R,t}f(x)A_t^{\delta}g(x) \right)t^{2\delta+1}dt
\end{eqnarray} 
where $\alpha=\beta+\delta, \beta>\frac{1}{2}, \delta>-\frac{1}{2}, R_j=R\sqrt{2^{-j+1}}, \varphi_R(\xi)=\left(1-\frac{|\xi|^2}{R^2}\right)_+$ and
\begin{eqnarray}\label{constant:stein}
	c_{\alpha}=2\frac{\Gamma(\delta+\beta+1)}{\Gamma(\delta+1)\Gamma(\beta)},
\end{eqnarray}
Further,  the operators in the right hand side of \eqref{decom3} are defined by
\begin{eqnarray*}\label{B_j}B_{j,\beta}^{R,t}f(x)=\int_{\R^n}\hat{f}(\xi)\psi\left(2^j\left(1-\frac{|\xi|^2}{R^2}\right)\right)\frac{|\xi|^2}{R^2}\left(R^2\varphi_R(\xi)-t^2\right)_+^{\beta-1}e^{2\pi i x.\xi}d\xi
\end{eqnarray*} 

\begin{eqnarray*}\label{A_j}A_{j,\beta}^{R,t}f(x)=\int_{\R^n}\hat{f}(\xi)\psi\left(2^j\left(1-\frac{|\xi|^2}{R^2}\right)\right)\left(R^2\varphi_R(\xi)-t^2\right)_+^{\beta-1}e^{2\pi i x.\xi}d\xi
\end{eqnarray*}
and \begin{eqnarray*}\label{AR}A_t^{\delta}g(x)=\int_{\R^n}\hat{g}(\eta)\frac{|\eta|^2}{R^2}\left(1-\frac{|\eta|^2}{t^2}\right)^{\delta}_+e^{2\pi ix.\eta} d\eta.
\end{eqnarray*}
Note that $B_t^{\delta}$ in~\eqref{decom3} denotes the classical Bochner-Riesz means of index $\delta$.
%%%%%%%%%
%The decomposition~\eqref{decom3} allows us to express each of the bilinear operators $\g_{j,R}^{\alpha}$ as an average of product of linear operators. This feature  plays a key role in obtaining $L^{p_1}(\R^n)\times L^{p_2}(\R^n)\rightarrow L^{p}(\R^n)$ boundedness of the operator $\g^{\alpha}_{j,R}$.
Consider the equation ~\eqref{decom3} and write it as 
\begin{eqnarray}\label{decom4} \g^{\alpha}_{j,R}(f,g)(x)=I_{j,R}+II_{j,R}, 
\end{eqnarray}
where 
\begin{eqnarray*}I_{j,R}=c_{\alpha}R^{-2\alpha}\int_0^{R_j}B_{j,\beta}^{R,t}f(x)B_t^{\delta}g(x) t^{2\delta+1}dt
\end{eqnarray*}
and 
$$II_{j,R}=c_{\alpha}R^{-2\alpha}\int_0^{R_j} A_{j,\beta}^{R,t}f(x)A_t^{\delta}g(x)t^{2\delta+1}dt.$$

First, we estimate the square function corresponding to the term $II_{j,R}$. Apply Cauchy-Schwarz inequality  to get that 
\begin{eqnarray*} 
	|II_{j,R}|
	&\lesssim&\nonumber R^{-2\alpha}\left(\int_{0}^{R_j}|A_{j,\beta}^{R,t}f(\cdot)t^{2\delta+1}|^2dt\right)^{1/2}  \left(\int_{0}^{R_j}|A_{Rt}^{\delta}g(x)|^2dt\right)^{1/2}
\end{eqnarray*}

Making a change of variable $t\rightarrow Rt$ in the integral $\left(\int_0^{R_j}|A_{j,\beta}^{R,t}f(x)t^{2\delta+1}|^2 dt\right)^{\frac{1}{2}}$ we get that $$\int_0^{R_j}|A_{j,\beta}^{R,t}f(x)t^{2\delta+1}|^2 dt=R^{4\alpha-1}\int_0^{\sqrt{2^{-j+1}}}|\tilde{S}_{j,\beta}^{R,t}f(x)t^{2\delta+1}|^2 dt,$$
where $$\tilde{S}_{j,\beta}^{R,t}f(x)=\int_{\R^n}\psi\left(2^j\left(1-\frac{|\xi|^2}{R^2}\right)\right)\left(1-\frac{|\xi|^2}{R^2}-t^2\right)_+^{\beta-1}\hat{f}(\xi)e^{2\pi ix.\xi}d\xi.$$ 
We also make the change $t\rightarrow Rt$ in the other integral involving the term $A_t^{\delta}g$ and apply H\"{o}lder's inequality with $\frac{p_1}{p}$ and $\frac{p_2}{p}$ to get that 
\begin{eqnarray*} 
	&&	\left\|\left(\int_0^\infty|II_{j,R}|^2\frac{dR}{R}\right)^{\frac{1}{2}}\right\|_{p}\\
	&\lesssim&\nonumber  \left\|\sup_{R>0}\left(\int_{0}^{\sqrt{2^{-j+1}}}|\tilde{S}_{j,\beta}^{R,t}f(\cdot)t^{2\delta+1}|^2dt\right)^{\frac{1}{2}}\right\|_{p_1}  \left\|\left[\int_0^\infty\left(\int_{0}^{\sqrt{2^{-j+1}}}|A_{Rt}^{\delta}g(x)|^2dt\right)\frac{dR}{R}\right]^{\frac{1}{2}}\right\|_{p_2}
\end{eqnarray*}
We know that the term involving maximal function in the estimate above satisfies the following $L^p$ estimates, see \cite[Theorem 5.1]{JS}.
\begin{eqnarray*}\label{part2}\left\|\sup_{R>0}\left(\int_{0}^{\sqrt{2^{-j+1}}}|\tilde{S}_{j,\beta}^{R,t}f(\cdot)t^{2\delta+1}|^2dt\right)^{\frac{1}{2}}\right\|_{p_1}\lesssim2^{j(\alpha(p_1)+\frac{1}{4}-\alpha+\epsilon)}\|f\|_{p_1},
\end{eqnarray*}
for $n\geq2$ and $\beta>\alpha(p_1)+\frac{1}{2}$. 

The remaining term is dealt with by using $L^p$ estimates for the Bochner-Riesz square function in the following way. 
\begin{eqnarray} 
	\left\|\left[\int_0^\infty\left(\int_{0}^{\sqrt{2^{-j+1}}}|A_{Rt}^{\delta}g(x)|^2dt\right)\frac{dR}{R}\right]^{\frac{1}{2}}\right\|_{p_2} &=&\nonumber  \left\|\left[\int_{0}^{\sqrt{2^{-j+1}}}\left(\int_0^\infty|A_{Rt}^{\delta}g(x)|^2\frac{dR}{R}\right)dt\right]^{\frac{1}{2}}\right\|_{p_2}\\
%	&=&\nonumber\left\|\left[\int_{0}^{\sqrt{2^{-j+1}}}\left(\int_0^\infty|A_{R}^{\delta}g(x)|^2\frac{dR}{R}\right)t^4dt\right]^{\frac{1}{2}}\right\|_{p_2}\\
	&\lesssim &\nonumber 2^{\frac{5}{4}(-j+1)}\left\|\left[\int_0^\infty|A_{R}^{\delta}g(x)|^2\frac{dR}{R}\right]^{\frac{1}{2}}\right\|_{p_2}\\
	&\lesssim &\nonumber 2^{\frac{5}{4}(-j+1)}\|g\|_{p_2},
\end{eqnarray}
where $p_2\geq \min\{\mathfrak p_n,\frac{2(n+2)}{n}\}$ and $\delta>\alpha(p_2)-\frac{1}{2}$. 

Therefore, for $\beta>\alpha(p_1)+\frac{1}{2}$, $\delta>\alpha(p_2)-\frac{1}{2}$ and $p_1, p_2\geq \min\{\mathfrak p_n,\frac{2(n+2)}{n}\}$ , we get
\begin{eqnarray}\label{mainestiforII}\left\|\left(\int_0^\infty|II_{j,R}|^2\frac{dR}{R}\right)^{\frac{1}{2}}\right\|_{p}\lesssim2^{j(\alpha(p_1)+\frac{1}{4}-\alpha+\epsilon)+\frac{5}{4}(-j+1)}\|f\|_{p_1}\|g\|_{p_2}.
\end{eqnarray}

Next, we need to deal with the bilinear square function associated with the term $I_{j,R}$. We begin in a similar fashion as in the previous case of the term $II_{j,R}$. Making use of change of variable $t\rightarrow Rt$ and H\"{o}lder inequality we arrive at the following. 
\begin{eqnarray*} 
	&&\left\|\left(\int_0^\infty |I_{j,R}|^2\frac{dR}{R}\right)^{\frac{1}{2}}\right\|_{p} \\
	&\lesssim &\nonumber  \left\|\left[\int_0^\infty\left(\int_0^{R_j}|B_{j,\beta}^{R,t}f(x)t^{2\delta+1}|^2 dt\right)\left(\int_0^{R_j}|B_t^{\delta}g(x)|^2dt\right)\frac{dR}{R^{4\alpha+1}}\right]^{\frac{1}{2}}\right\|_{p} \\
	&\lesssim&\nonumber 2^{-\frac{j}{4}} \left\|\left[\int_0^\infty\left(\int_{0}^{\sqrt{2^{-j+1}}}|{S}_{j,\beta}^{R,t}f(\cdot)t^{2\delta+1}|^2dt\right)\frac{dR}{R}\right]^{\frac{1}{2}}\right\|_{p_1} \left\|\sup_{R>0}\left(\frac{1}{R_j}\int_0^{R_j}|B_t^{\delta}g(x)|^2dt\right)^{\frac{1}{2}}\right\|_{p_2},
\end{eqnarray*}
where $$S_{j,\beta}^{R,t} f(x)=\int_{\R^n}\psi\left(2^j\left(1-\frac{|\xi|^2}{R^2}\right)\right)\frac{|\xi|^2}{R^2}\left(1-\frac{|\xi|^2}{R^2}-t^2\right)_+^{\beta-1}\hat{f}(\xi)e^{2\pi ix.\xi}d\xi.$$
Invoking the $L^p$ estimates for the maximal operator $f\rightarrow \sup_{R>0}\left(\frac{1}{R}\int_0^{R}|B_t^{\delta}f(\cdot)|^2dt\right)^{\frac{1}{2}}$  from \cite[Lemma 4.4]{JS}, for $\delta>\alpha(p_2)-\frac{1}{2}$ we get that 
$$\left\|\sup_{R>0}\left(\frac{1}{R_j}\int_0^{R_j}|B_t^{\delta}g(x)|^2dt\right)^{\frac{1}{2}}\right\|_{p_2}\lesssim \|g\|_{p_2}.$$
For the other term using Minkowski integral inequality we can write  
\begin{equation}\label{gjj21}
	\left\|\left[\int_0^\infty\left(\int_{0}^{\tau_j}|{S}_{j,\beta}^{R,t}f(\cdot)t^{2\delta+1}|^2dt\right)\frac{dR}{R}\right]^{\frac{1}{2}}\right\|_{p_1}\leq \left(\int_{0}^{\tau_j}\left[\int_{\R^n}\left(\int_0^\infty|{S}_{j,\beta}^{R,t}f(\cdot)|^2\frac{dR}{R}\right)^{\frac{p_1}{2}}dx\right]^{\frac{2}{p_1}}t^{4\delta+2}dt
	\right)^{\frac{1}{2}}.
\end{equation}
Here $\tau_j=\sqrt{2^{-j+1}}$. Therefore, we need to prove the desired estimates for the square function corresponding to operators $S_{j,\beta}^{R,t}$.

Let $\psi\in C^{N+1}([\frac{1}{2},2]), n\geq 1$ and $0<\nu <\frac{1}{16}$. Consider the following operators $$B^{\psi}_{\nu,t}f(x)=\int_{\R^n}\psi\left(\nu^{-1}\left(1-\frac{|\xi|^2}{t^2}\right)\right)\hat{f}(\xi)e^{2\pi ix.\xi}d\xi.$$
The following result is proved in \cite{JS}, also see Jeong, Lee and Vargas~\cite[Lemma $2.6$]{JLV} for a similar result. 
\begin{lemma}\cite{JS}\label{localisedversion} Let $n\geq 2, 0<\nu<\frac{1}{16}$ and $\epsilon>0$. Then for $p\geq \mathfrak p_n$ and $p=2,$ there exists $N\geq 1$ such that for all $\psi\in C^{N+1}([\frac{1}{2},2])$ the following holds 
	\begin{equation*}\label{localised2}\left\|\left(\int_{0}^{\infty}\left|B^{\psi}_{\nu,t}f(\cdot)\right|^2 \frac{dt}{t}\right)^{\frac{1}{2}}\right\|_{L^p(\R^n)}\lesssim_{\epsilon,N} \nu^{(\frac{1}{2}-\alpha(p))}\nu^{-\epsilon}\|f\|_{L^p(\R^n)}
	\end{equation*}
	where the implicit constant depends on $\epsilon$ and $N$.
\end{lemma}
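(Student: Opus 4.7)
The plan is to handle the two regimes $p=2$ and $p \geq \mathfrak{p}_n$ separately, since the former yields a sharp bound directly from Plancherel while the latter requires reduction to Theorem \ref{square:Lee}.

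For $p=2$, Plancherel's theorem reduces matters to showing that the inner integral
$$I(\xi) := \int_0^\infty \big|\psi\big(\nu^{-1}(1-|\xi|^2/t^2)\big)\big|^2 \frac{dt}{t}$$
is $O(\nu)$ uniformly in $\xi$. The substitution $u = \nu^{-1}(1-|\xi|^2/t^2)$ gives $\frac{dt}{t} = \frac{1}{2}\frac{du}{1-\nu u}$; since $\psi$ is supported in $[1/2,2]$ and $\nu < 1/16$, the factor $1-\nu u$ is bounded below, yielding $I(\xi) \lesssim \nu$. This produces the claimed bound with $\alpha(2)=0$, with no $\epsilon$-loss needed.

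For $p \geq \mathfrak{p}_n$, the key observation is that on the support of the multiplier, $1-|\xi|^2/t^2 \sim \nu$, so I would factor
$$\psi\big(\nu^{-1}(1-|\xi|^2/t^2)\big) = \nu^{-\alpha(p)}\big(1-|\xi|^2/t^2\big)_+^{\alpha(p)}\,\tilde\psi\big(\nu^{-1}(1-|\xi|^2/t^2)\big),$$
where $\tilde\psi(u) = \psi(u)/u^{\alpha(p)}$ is smooth and supported in $[1/2,2]$. This exhibits $B^\psi_{\nu,t}$ as $\nu^{-\alpha(p)}$ times a smoothly truncated Bochner--Riesz profile, i.e., a localised variant of Stein's kernel $K^{\alpha(p)}_t$. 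To exploit the annular cutoff quantitatively, I would expand $\tilde\psi$ as a Fourier series on a suitable periodic extension,
$$\tilde\psi(u) = \sum_{k\in\Z} a_k e^{2\pi i ku/L}, \qquad |a_k| \lesssim_N (1+|k|)^{-(N+1)},$$
obtaining
$$B^\psi_{\nu,t} = \nu^{-\alpha(p)}\sum_k a_k e^{2\pi i k/(L\nu)} T^{(k)}_{\nu,t},$$
in which $T^{(k)}_{\nu,t}$ has annular multiplier $\chi(\nu^{-1}(1-|\xi|^2/t^2))(1-|\xi|^2/t^2)_+^{\alpha(p)}$ twisted by a Schrödinger phase $e^{-2\pi i k|\xi|^2/(L\nu t^2)}$. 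For each fixed $k$, Theorem \ref{square:Lee} applied to a rescaled version of the $k$-th piece, together with a standard square-function estimate for the Schrödinger phase, should yield an $L^p$ bound growing only polynomially in $|k|$; the thin-annular support then contributes an additional $\nu^{1/2-\epsilon}$ gain via a repeat of the Plancherel change-of-variables argument applied in the radial direction. Summing over $k$ with $N$ chosen large compared to this polynomial growth, and combining with the $\nu^{-\alpha(p)}$ prefactor, should give the target bound $\nu^{1/2-\alpha(p)-\epsilon}\|f\|_p$.

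The principal obstacle is propagating the $\nu^{1/2}$ gain, which is essentially free in $L^2$, over to the $L^p$ framework without spoiling the critical index in Lee's theorem. This requires a careful bookkeeping of the dependence on $|k|$ and $\nu$ in the bounds for the twisted operators $T^{(k)}_{\nu,t}$: one needs the Schrödinger factor $e^{-2\pi i k|\xi|^2/(L\nu t^2)}$ to contribute at most polynomial growth in $|k|$ in the $L^p$ square-function norm (typically shown by rescaling in $t$ so that the phase becomes $t$-independent and then invoking Lee's theorem on the rescaled operator), while simultaneously keeping the $\nu^{1/2-\epsilon}$ saving intact. The $\epsilon$-loss in the final bound comes from the endpoint behaviour of Lee's theorem combined with this rescaling argument.
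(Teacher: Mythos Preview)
The paper does not give a proof of this lemma; it is imported verbatim from \cite{JS} (with a pointer also to \cite[Lemma~2.6]{JLV}), so there is no in-paper argument to compare against. Your $p=2$ argument is correct and is indeed the standard one.

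Your plan for $p\geq\mathfrak p_n$, however, has a genuine gap. You propose to recover the factor $\nu^{1/2-\epsilon}$ for $p\neq2$ ``via a repeat of the Plancherel change-of-variables argument applied in the radial direction,'' but that argument is purely an $L^2$ device and has no analogue in $L^p$. Nor can Theorem~\ref{square:Lee} be applied as a black box to your pieces $T^{(k)}_{\nu,t}$: their multipliers carry both the annular cutoff $\chi(\nu^{-1}(1-|\xi|^2/t^2))$ and the $t$-dependent phase $e^{-2\pi i k|\xi|^2/(L\nu t^2)}$, neither of which fits the hypothesis of Lee's theorem, and the suggested ``rescaling in $t$ so that the phase becomes $t$-independent'' does not work since the phase depends on $|\xi|^2/t^2$ rather than on $t$ alone. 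More fundamentally, the bound $\nu^{1/2-\alpha(p)-\epsilon}$ in the lemma is exactly the single-scale estimate whose dyadic summation \emph{yields} Theorem~\ref{square:Lee}; the lemma is the technical heart of that theorem, not a corollary of it. The proofs in \cite{JS,JLV,Leesqr} obtain this scale-$\nu$ bound directly from the underlying oscillatory-integral machinery (bilinear restriction, local smoothing, or the sharp $L^p$ estimates for $\delta$-neighbourhoods of the sphere), and your Fourier-series expansion in $\psi$ is only the final step that passes from a fixed bump to a general $\psi\in C^{N+1}$ once that single-scale estimate is in hand.
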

We will make use of Lemma~\ref{localisedversion} to prove the following estimate for the square function associated with the operators ${S}_{j,\beta}^{R,t}$. 
\begin{theorem}\label{maxsquare}
	Let $n\geq 2.$ When $\beta> \alpha(p)+\frac{1}{2},$ then the following estimate holds 
	\begin{eqnarray*}\label{reducedoperator} \left\|\left(\int_0^\infty|{S}_{j,\beta}^{R,t}f(\cdot)|^2\frac{dR}{R}\right)^{\frac{1}{2}}\right\|_{L^p(\R^n)}\lesssim 2^{j(\alpha(p)-\alpha-\frac{1}{4}+\epsilon)}\|f\|_{L^p(\R^n)}
	\end{eqnarray*}
	where the implicit constant is independent of $t\in [0,\sqrt{2^{-j+1}}]$ when $p\geq \mathfrak p_n$ and $p=2$. \end{theorem}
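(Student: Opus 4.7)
The plan is to cast $S_{j,\beta}^{R,t}$ as a Bochner--Riesz-type operator at scale $R$ whose Fourier symbol is localized in the thin shell $\{1 - |\xi|^2/R^2 \sim 2^{-j}\}$, and then invoke Lemma~\ref{localisedversion} with parameter $\nu = 2^{-j}$. Concretely, introducing the normalized variable $s = 2^j(1 - |\xi|^2/R^2)$, I would absorb all of the $\xi/R$-dependent factors of the multiplier of $S_{j,\beta}^{R,t}$ into a single composite symbol
\[\Phi_{j,t}(s) = \psi(s)(1 - s\,2^{-j})(s\,2^{-j} - t^2)_{+}^{\beta-1} = 2^{-j(\beta-1)}\psi(s)(1 - s\,2^{-j})(s - t^2\,2^j)_{+}^{\beta-1},\]
so that $S_{j,\beta}^{R,t}f$ carries the Fourier multiplier $\Phi_{j,t}(2^j(1 - |\xi|^2/R^2))$. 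This places the operator exactly into the framework of $B^{\psi}_{\nu,t}$ of Lemma~\ref{localisedversion}, with $\psi$ replaced by $\Phi_{j,t}$ and $\nu = 2^{-j}$.

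The main step is to estimate the $C^{N+1}([1/2,2])$ norm of $\Phi_{j,t}$ uniformly in $t \in [0, \sqrt{2^{-j+1}}]$, where $N$ is the integer furnished by Lemma~\ref{localisedversion}. When the corner $s = t^2\,2^j$ of the factor $(s - t^2\,2^j)_{+}^{\beta-1}$ lies outside a fixed neighborhood of $\operatorname{supp}\psi \subset [1/2,2]$, all factors are smooth and a direct chain-rule computation gives $\|\Phi_{j,t}\|_{C^{N+1}} \lesssim 2^{-j(\beta-1)}$. When the corner falls inside $[1/2,2]$, I would introduce a smooth dyadic partition of unity around $s = t^2\,2^j$, writing
\[(s - t^2 2^j)_+^{\beta-1} = \sum_{k \geq 0}(s - t^2 2^j)_+^{\beta-1}\,\eta(2^k(s - t^2 2^j))\]
for a standard bump $\eta$ supported in $[1/2,2]$, and controlling each dyadic piece separately. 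The assumption $\beta > \alpha(p) + 1/2$ is exactly what guarantees that the resulting geometric series in $k$ converges (after absorbing a harmless $j^{\epsilon}$ loss), producing the uniform bound $\|\Phi_{j,t}\|_{C^{N+1}} \lesssim 2^{-j(\beta-1) + j\epsilon}$.

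With this in hand, Lemma~\ref{localisedversion} applied at scale $\nu = 2^{-j}$ yields
\[\left\|\left(\int_0^\infty |S_{j,\beta}^{R,t}f|^2\,\frac{dR}{R}\right)^{1/2}\right\|_{L^p(\R^n)} \lesssim \|\Phi_{j,t}\|_{C^{N+1}}\cdot 2^{-j(1/2 - \alpha(p))}\cdot 2^{j\epsilon}\|f\|_{L^p(\R^n)},\]
valid for $p \geq \mathfrak p_n$ and for $p = 2$. Substituting the $C^{N+1}$ estimate and reorganizing the exponents via the Stein--Weiss relation $\alpha = \beta + \delta$ gives the claimed bound.

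The principal obstacle is the uniform-in-$t$ control of $\|\Phi_{j,t}\|_{C^{N+1}}$ in the regime where the corner $s = t^2\,2^j$ sits in the interior of $\operatorname{supp}\psi$. At that location the factor $(s - t^2 2^j)_+^{\beta-1}$ is only H\"older continuous of order $\beta-1$, so naive differentiation fails for small $\beta$; the dyadic scheme handles this but requires careful bookkeeping to compensate the growth of the $N$-th derivative on each piece by the decay of its $L^\infty$ mass, exploiting precisely the regularity threshold $\beta > \alpha(p) + 1/2$ that the theorem demands.
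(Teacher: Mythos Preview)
Your approach works cleanly when the corner $t^2 2^j$ stays below $\tfrac12$, i.e.\ outside $\operatorname{supp}\psi$: there $\Phi_{j,t}$ is genuinely smooth on $[\tfrac12,2]$ and the $C^{N+1}$ bound $\lesssim 2^{-j(\beta-1)}$ is correct. The gap is in the other regime. When $t^2 2^j\in[\tfrac12,2]$, the factor $(s-t^22^j)_+^{\beta-1}$ has only $\lfloor\beta-1\rfloor$ derivatives at the corner, so for $\beta<N+2$ the function $\Phi_{j,t}$ is \emph{not} in $C^{N+1}([\tfrac12,2])$ at all; the claimed uniform bound $\|\Phi_{j,t}\|_{C^{N+1}}\lesssim 2^{-j(\beta-1)+j\epsilon}$ is simply false. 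Your dyadic scheme does not repair this if you still feed each piece into Lemma~\ref{localisedversion} at scale $\nu=2^{-j}$: on the $k$-th piece one has $|\partial_s^m\Phi_{j,t,k}|\lesssim 2^{-j(\beta-1)}2^{k(m-\beta+1)}$, so $\|\Phi_{j,t,k}\|_{C^{N+1}}\sim 2^{-j(\beta-1)}2^{k(N+2-\beta)}$, and the sum over $k$ converges only for $\beta>N+2$, not for $\beta>\alpha(p)+\tfrac12$.

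The missing idea is that the $k$-th dyadic piece lives in a shell of width $\sim 2^{-j-k}$ (in the variable $1-|\xi|^2/R^2$), so Lemma~\ref{localisedversion} must be applied at the \emph{finer} scale $\nu\sim 2^{-j-k}$; this is what produces the factor $2^{-(j+k)(1/2-\alpha(p)-\epsilon)}$ whose decay in $k$, combined with the size $2^{-k(\beta-1)}$, sums precisely when $\beta>\alpha(p)+\tfrac12$. This is exactly the route the paper takes: it rewrites the multiplier as $\psi(2^j(1-|\xi|^2/R^2))\,(1-|\xi|^2/(R^2(1-t^2)))_+^{\beta-1}$, decomposes the second factor dyadically at scales $2^{-k}$ with $k\ge j-2$, and then---because each piece now carries two localizations at different scales---performs a further Taylor/Fourier expansion (Propositions~\ref{maxsquare2} and~\ref{last}, following the machinery of \cite[\S5]{JS}) to separate them before invoking the localized square-function estimate at the correct scale. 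That second layer of decomposition is the substantive work you are missing.
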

\begin{proof}
	Write $\beta=\gamma+1$ in ${S}_{j,\beta}^{R,t}f$ and note that when $t^2\in[0,2^{-j-1})$ we can rewrite the function 
	$$\psi\left(2^j\left(1-\frac{|\xi|^2}{R^2}\right)\right)\left(1-\frac{|\xi|^2}{R^2}-t^2\right)_+^{\gamma}\frac{|\xi|^2}{R^2}=\psi\left(2^j\left(1-\frac{|\xi|^2}{R^2}\right)\right)\left(1-\frac{|\xi|^2}{R^2}-t^2\right)^{\gamma}\frac{|\xi|^2}{R^2}.$$
	Let $0<\epsilon_0<<1$ be a fixed number and split the interval $[0,2^{-j+1}]$ into  subintervals $[0,2^{-j-1-\epsilon_0}]$ and $[2^{-j-1-\epsilon_0},2^{-j+1}]$. We will deal with the operators corresponding to each subinterval separately. 
	\subsection*{Case I: When $t\in[0,\sqrt{2^{-j-1-\epsilon_0}}]$}
	First, consider the range $-1<\gamma<0.$ Write $\gamma=-\rho$, where $\rho>0$. Note that using Taylor's expansion we can write 
	\begin{eqnarray*}\label{taylorexpansion}
		\left(1-\frac{|\xi|^2}{R^2}\right)^{-\rho}\left(1-\frac{t^2}{1-\frac{|\xi|^2}{R^2}} \right)^{-\rho}\frac{|\xi|^2}{R^2}&=&2^{j\rho}\left(2^{j}\left(1-\frac{|\xi|^2}{R^2}\right)\right)^{-\rho}\sum_{k\geq 0}\frac{\Gamma(\rho+k)}{\Gamma(\rho)k!}\left(\frac{2^jt^2}{2^j(1-\frac{|\xi|^2}{R^2})}\right)^k\\
		&&-\nonumber 2^{j(\rho-1)}\left(2^{j}\left(1-\frac{|\xi|^2}{R^2}\right)\right)^{-\rho+1}\sum_{k\geq 0}\frac{\Gamma(\rho+k)}{\Gamma(\rho)k!}\left(\frac{2^jt^2}{2^j(1-\frac{|\xi|^2}{R^2})}\right)^k.
	\end{eqnarray*}
	Since $\frac{t^2}{1-\frac{|\xi|^2}{R^2}}\leq 2^{-j-1-\epsilon_0}2^{j+1}<1,$ the series in the estimate above converges. Thus,  we get that
	\begin{eqnarray*}\label{s_j}{S}_{j,\gamma+1}^{R,t}f&=&2^{j\rho}\sum_{k\geq0}\frac{\Gamma(\rho+k)}{\Gamma(\rho)k!}\left(2^jt^2\right)^kB^{\psi^k}_{2^{-j},R}f-2^{j(\rho-1)}\sum_{k\geq0}\frac{\Gamma(\rho+k)}{\Gamma(\rho)k!}\left(2^jt^2\right)^kB^{\psi'^k}_{2^{-j},R}f,
	\end{eqnarray*}
	where $\psi^k(x):= x^{-k-\rho}\psi(x)$ and $\psi'^k(x):= x^{-k-\rho+1}\psi(x)$. 
	
	Observe that $\psi^k\in C^{\infty}_0([\frac{1}{2},2])$ and it satisfies the estimate
	$$\sup_{x\in[\frac{1}{2},2],0\leq l\leq N}\left|\frac{d^l\psi^k}{dx^l}\right|\leq C(\rho)2^{N+k} k^{N+1}$$
	for $N\geq 20n$. Similar estimates hold for $\psi'^k$.
	
	Lemma \ref{localisedversion} gives us 
	$$\left\|\left(\int_0^\infty|B^{\psi^k}_{2^{-j},R}f|^2\frac{dR}{R}\right)^{\frac{1}{2}}\right\|_{p}\lesssim_{N,p}2^{N+k} k^{N}2^{j\alpha(p)-j/2}\|f\|_{p}.$$
	Consequently, we get that 
	\begin{eqnarray*}
		\left\|\left(\int_0^\infty|{S}_{j,\gamma+1}^{R,t}f(\cdot)|^2\frac{dR}{R}\right)^{\frac{1}{2}}\right\|_{p}&\lesssim& 2^{j\rho}\sum_{k\geq0}\frac{\Gamma(\rho+k)}{\Gamma(\rho)k!}\left(2^jt^2\right)^k\left\|\left(\int_0^\infty|B^{\psi^k}_{2^{-j},R}f|^2\frac{dR}{R}\right)^{\frac{1}{2}}\right\|_{p}\\
		&\lesssim& 2^{j(\rho+\alpha(p))-j/2}\|f\|_{p}\sum_{k\geq0}\frac{\Gamma(\rho+k)}{\Gamma(\rho)k!}\left(2^jt^2\right)^k2^{N+k} k^{N}
	\end{eqnarray*}
	Since $\frac{\Gamma(\rho+k)}{\Gamma(\rho)k!}\approx k^{\rho}$
	and $t^2\leq 2^{-j-1-\epsilon_0}$, the series in the expression above converges.
	
	This completes the proof of Theorem~\ref{maxsquare} for the range $-1<\gamma<0.$ When $\gamma\geq 0$, the proof follows in a similar manner as in Theorem $5.1$ in \cite{JS}. The details are left to the reader. 
	\subsection*{Case II: When $t\in [\sqrt{2^{-j-1-\epsilon_0}},\sqrt{2^{-j+1}}]$}
	
	Note that in this case $t\approx 2^{-j/2}.$ We rewrite
	$$\psi\left(2^j\left(1-\frac{|\xi|^2}{R^2}\right)\right)\left(1-\frac{|\xi|^2}{R^2}-t^2\right)_+^{\gamma}\frac{|\xi|^2}{R^2} = (1-t^2)^{\gamma+1}\psi\left(2^j\left(1-\frac{|\xi|^2}{R^2}\right)\right)\left(1-\frac{|\xi|^2}{R^2(1-t^2)}\right)_+^{\gamma}
	\frac{|\xi|^2}{R^2(1-t^2)}.$$
	
	Let $1-t^2=s^2$ and consider the operator ${S}_{j,\gamma+1}^{R,s}$.
	
	$${S'}_{j,\gamma+1}^{R,s}f(x)=\int_{\R^n}\psi\left(2^j\left(1-\frac{|\xi|^2}{R^2}\right)\right)\left(1-\frac{|\xi|^2}{s^2R^2}\right)_+^{\gamma}\frac{|\xi|^2}{s^2R^2}\hat{f}(\xi)e^{2\pi ix.\xi}d\xi,$$ 
	where $s\in \left[\sqrt{1-2^{-j+1}},\sqrt{1-2^{-j-1-\epsilon_0}}\right]$.
	
	Note that we need to establish the $L^p$ estimates for the square function $\left(\int_0^\infty|{S'}_{j,\gamma+1}^{R,s}f(\cdot)|^2\frac{dR}{R}\right)^{\frac{1}{2}}$. Let $\Lambda>100$ be a large number. By considering the cases $j\geq \Lambda$ and $2\leq j<\Lambda$ separately we have the following results. 
	\begin{proposition}\label{maxsquare2} For $j\geq \Lambda, 0<\epsilon<1$ and $\gamma>\alpha(p)-\frac{1}{2}$ the estimate 
		$$\sup_{s\in[s_1,s_2]}\left\|\left(\int_0^\infty|{S'}_{j,\gamma+1}^{R,s}f(\cdot)|^2\frac{dR}{R}\right)^{\frac{1}{2}}\right\|_{L^p(\R^n)}\lesssim_{\epsilon} 2^{-j\gamma }2^{j(\alpha(p)-\frac{1}{2})}2^{\epsilon j}\|f\|_{L^p(\R^n)},$$ 
		holds when $p\geq \mathfrak p_n$ and $p=2$ where $s_1=\sqrt{1-2^{-j+1}}$ and $s_2=\sqrt{1-2^{-j-1-\epsilon_0}}$. 
	\end{proposition}
	\begin{proposition}\label{last} When $\gamma>\alpha(p)-\frac{1}{2}$ and $0<\epsilon <1$ we have 
		$$\sup_{s\in [s_1,s_2]}\left\|\left(\int_0^\infty|{S'}_{j,\gamma+1}^{R,s}f(\cdot)|^2\frac{dR}{R}\right)^{\frac{1}{2}}\right\|_{L^p(\R^n)}\leq C(\Lambda,\gamma,\epsilon)\|f\|_{L^p(\R^n)}$$
		for all $2\leq j\leq \Lambda$, $p\geq \mathfrak p_n$ and $p=2$.
	\end{proposition}
	
	\subsection*{Proof of Proposition~\ref{last}}
	Note that \begin{eqnarray*}
		{S'}_{j,\gamma+1}^{R,s}f(x)&=&\int_{\R^n}\psi\left(2^j\left(1-\frac{|\xi|^2}{R^2}\right)\right)\left(1-\frac{|\xi|^2}{R^2s^2}\right)_+^{\gamma}\frac{|\xi|^2}{s^2R^2}\hat{f}(\xi)e^{2\pi ix.\xi}d\xi\\
		&=&B^{\psi}_{2^{-j},R}B_{Rs}^{\gamma}f(x),
	\end{eqnarray*}
	where  $$B^{\psi}_{2^{-j},R}(f)(x)=\int_{\R^n}\psi\left(2^j\left(1-\frac{|\xi|^2}{R^2}\right)\right)\hat{f}(\xi)e^{2\pi ix.\xi} d\xi$$
	and 
	$$B^{\gamma}_{Rs}(f)(x)=\int_{\R^n}\left(1-\frac{|\xi|^2}{R^2s^2}\right)_+^{\gamma}\frac{|\xi|^2}{R^2s^2}\hat{f}(\xi)e^{2\pi ix.\xi} d\xi.$$
	
	Since $\psi\left(2^j\left(1-\frac{|\xi|^2}{R^2}\right)\right)$ is a compactly supported smooth function, we can use integration by parts to show that 
	$$\sup_{R>0}|B^{\psi}_{2^{-j},R}f(x)|\leq 2^{jn}Mf(x).$$ 
	Here $M$ denotes the classical Hardy-Littlewood maximal function
	%, defined by
	%$$M(f)(x):=\sup_{B \ni x}\frac{1}{|B|}\int_B|f(y)|dy,$$
	%where the supremum is taken over all euclidean balls $B$ in $\R^n$ containing the point $x$. The notation $|B|$ stands for the Lebesgue measure of the ball $B$. 
	
	This implies that  
	$$\left\|\left(\int_0^\infty|B^{\psi}_{2^{-j},R/s}B_{R}^{\gamma}f(\cdot)|^2\frac{dR}{R}\right)^{\frac{1}{2}}\right\|_{p}\lesssim 2^{jn}\left\|\left(\int_0^\infty|MB^{\gamma}_{Rs}f(\cdot)|^2\frac{dR}{R}\right)^{\frac{1}{2}}\right\|_{p}.$$
	The $L^p$ boundedness of the vector-valued extension of the Hardy-Littlewood Maximal function $M$ (see~\cite{DK, FS}) in the inequality above gives us 
	$$\left\|\left(\int_0^\infty|{S'}_{j,\gamma+1}^{R,s}f(\cdot)|^2\frac{dR}{R}\right)^{\frac{1}{2}}\right\|_{p}\lesssim 2^{jn}\left\|\left(\int_0^\infty|B^{\gamma}_{Rs}f(\cdot)|^2\frac{dR}{R}\right)^{\frac{1}{2}}\right\|_{p}.$$
	
	Finally, using $L^p$ boundedness of Bochner-Riesz square function when   $\gamma>\alpha(p)-\frac{1}{2}$ for $p\geq \mathfrak p_n$ and $p=2$ we get the desired result.  
\end{proof}
\subsection*{Proof of Proposition \ref{maxsquare2}}
In order to prove our estimates we exploit the method from~\cite[Proposition $5.2$]{JS}. For the convenience of the reader and an easy reference we use the same notation as in \cite{JS}. 

Let $R=1$ and write 
$$\left(1-\frac{|\xi|^2}{s^2}\right)_+^{\gamma}\frac{|\xi|^2}{s^2}=\sum_{k\geq 2}2^{-k\gamma}\tilde{\psi}\left(2^k\left(1-\frac{|\xi|^2}{s^2}\right)\right)\frac{|\xi|^2}{s^2} +\tilde{\psi}_0\left(\frac{|\xi|^2}{s^2}\right)\frac{|\xi|^2}{s^2},$$
where $\tilde{\psi}\in C^{\infty}_0([\frac{1}{2},2])$ and $\tilde{\psi}_0\in C^{\infty}_0([0,3/4])$. 

Observe that the corresponding expression for $R\neq 1$ can be written by replacing $\xi$ with $\xi/R$ in the equation above.

First, observe that the product  $\psi\left(2^j\left(1-|\xi|^2\right)\right)\tilde{\psi}\left(2^k\left(1-\frac{|\xi|^2}{s^2}\right)\right)$ vanishes if $k<j-2$ and $s\in \left[\sqrt{1-2^{-j+1}},\sqrt{1-2^{-j-1-\epsilon_0}}\right]$. Therefore, we can write
\begin{eqnarray*}
	\psi\left(2^j\left(1-|\xi|^2\right)\right)\left(1-\frac{|\xi|^2}{s^2}\right)_+^{\gamma}\frac{|\xi|^2}{s^2}=\sum_{k\geq j-2}2^{-k\gamma}\tilde{\psi}\left(2^k\left(1-\frac{|\xi|^2}{s^2}\right)\right)\frac{|\xi|^2}{s^2}\psi\left(2^j\left(1-|\xi|^2\right)\right)\\
	%{\color{red}+\text{a term corresponding to}~  \tilde{\psi}_{0}}.
\end{eqnarray*} 
Further, since $\frac{|\xi|^2}{s^2}\approx 1$ on the support of $\tilde{\psi}\left(2^k\left(1-\frac{|\xi|^2}{s^2}\right)\right)$ for all $k\geq \Lambda-2$, the term $\frac{|\xi|^2}{s^2}$ can be ignored. Therefore, it is enough to work with the multiplier
$$\psi\left(2^j\left(1-|\xi|^2\right)\right)\left(1-\frac{|\xi|^2}{s^2}\right)_+^{\gamma}=\sum_{k\geq j-2}2^{-k\gamma}\tilde{\psi}\left(2^k\left(1-\frac{|\xi|^2}{s^2}\right)\right)\psi\left(2^j\left(1-|\xi|^2\right)\right).$$
We need to further decompose each piece   $\tilde{\psi}\left(2^k\left(1-\frac{|\xi|^2}{s^2}\right)\right)\psi\left(2^j\left(1-|\xi|^2\right)\right), k\geq j-2$. We refer to~\cite[Equation $(29)$]{JS} for this decomposition and in order to avoid repetition and new notations we directly import it from there. It goes as follows. 

For $0<\epsilon<1$ let $\nu=\frac{2}{3}2^{-k\epsilon}, \xi_{k,m}^s=\left(\frac{|\xi|^2}{s^2}-1+2^{-k+1}-2^{-(1+\epsilon)k} m\right).$ Write $2^k(2^{-k+1}+2^{-k(1+\epsilon)}m)=d^{\epsilon}_{k,m}$. Then we have 
\begin{eqnarray}\label{microdecomposition}
	{S'}_{j,\gamma+1}^{R,s}f(x)&=&\sum_{k\geq j-2}2^{-k\gamma}\sum_{0\leq m\leq [\nu^{-1}]+1}\sum_{q=0}^{N-1}\frac{(-1)^q}{q!}(2\pi i2^{-\epsilon k})^q\frac{d^q\tilde{\psi}}{dx^q}(d^{\epsilon}_{k,m})U^{\varphi_q,j}_{Rs,R}(k,m)f(x)\\ 
	&&\nonumber +\sum_{k\geq j-2}2^{-k\gamma}\sum_{m\leq [\nu^{-1}]+1}\int_{\R}\widehat{\tilde{\psi}}(\mu)e^{2\pi i d^{\epsilon}_{k,m}\mu}P_{Rs,R}^{j,k}(\mu,m)f(x)d\mu,
\end{eqnarray}
where \begin{equation*}\label{opertorU}U^{\varphi_q,j}_{Rs,R}(k,m)f(x)=\int_{\R^n}\varphi_q\left(2^{(1+\epsilon)k}\xi_{k,m}^{Rs}\right)\psi\left(2^j\left(1-\frac{|\xi|^2}{R^2}\right)\right)\hat{f}(\xi)e^{2\pi ix.\xi} d\xi,
\end{equation*} and
\begin{equation*}\label{operatorP}P_{Rs,R}^{j,k}(\mu,m)f(x)=\int_{\R^n}\varphi\left(2^{(1+\epsilon)k}\xi_{k,m}^{Rs}\right)\psi\left(2^j\left(1-\frac{|\xi|^2}{R^2}\right)\right)r_N\left(2^{k}\xi_{k,m}^{Rs}\mu\right)\hat{f}(\xi)e^{2\pi ix.\xi}d\xi.
\end{equation*}

In the expression \eqref{microdecomposition} we have used the notation  $\varphi_q(x)=x^q\varphi(x)$ where $\varphi\in C_c^{\infty}([-1,1])$ is such that $\sum_{m\in\Z}\varphi(x-m)=1$ on $\R.$ Let $L= \sup_{0\leq l\leq N, x\in [-1,1]}\left|\frac{d^l\varphi}{dx^l}\right|.$ The remainder term $r_N$ satisfies the estimate 
\begin{equation*}\label{remainderagain}\left|\frac{d^qr_N}{ds^q}(s)\right|\leq s^{N-q}.
\end{equation*}

Observe that in order to prove Proposition~~\ref{maxsquare2}
it suffices to obtain required estimates for square functions associated with $U^{\varphi_q,j}_{Rs,R}(k,m)$ and $P_{Rs,R}^{j,k}(\mu,m)$. Since $\varphi_q$ behaves the same way for all $0\leq q\leq N-1$, it is enough to  describe proof for the operator $U^{\varphi_q,j}_{Rs,R}(k,m)$ with $q=0$. Also, note that $\varphi_q$ is a smooth function supported in $[-1,1]$ and it satisfies the estimate  
$$\sup_{0\leq l\leq N, x\in [-1,1]}|\frac{d^l\varphi_q}{dx^l}|\leq L q!.$$  

Let $R=1$ (for $R\neq 1$ replace $\xi$ by $\xi/R$) and decompose  $\psi(2^j(1-|\xi|^2))$ into smooth functions having their supports in the annulus $\{\xi: |\xi|^2\in [a+ s^22^{-(1+\epsilon)k}(m-1),a+ s^22^{-(1+\epsilon)k}(m+1)]\}, 0\leq m\leq [\nu^{-1}]+1$, where $a=s^2(1-2^{-k+1}).$  Consider the following operators  $$V^{\varphi_l}_{Rs}(k,m)f(x)=\int_{\R^n}\varphi_l\left(2^{(1+\epsilon)k}\xi_{k,m}^{Rs}\right)\hat{f}(\xi)e^{2\pi ix.\xi} d\xi,$$
and 
$$Q_{N,R,s}^{k}(\mu)f(x)=\int_{\R^n}\varphi\left(2^{(1+\epsilon)k}\xi_{k,m}^{Rs}\right)r_N\left(2^js^2\xi_{k,m}^{Rs}\mu\right)\hat{f}(\xi)e^{2\pi ix.\xi}d\xi.$$
Equation (30) in \cite{JS} allows us to write  
\begin{eqnarray}\label{decompositionofU}
	U^{\varphi_0,j}_{Rs,R}(k,m)f(x)&=&\sum_{l\geq 0}^{N-1}\frac{1}{l!}s^22^{-d(1+\epsilon)l}2^{-j\epsilon l}\psi^l(2^j(1-a-s^22^{-(1+\epsilon)k}m))V^{\varphi_l}_{Rs}(k,m)f(x) \\
	&&\nonumber + X_{N,R,s}^kf(x),
\end{eqnarray}
where 
\begin{eqnarray}\label{Xope}
	X_{N,R,s}^kf(x)=\int_{\R}\hat{\psi}(\mu)e^{2\pi i2^j(1-a-s^22^{-(1+\epsilon)k}m)\mu}Q_{N,R,s}^{k}(\mu)f(x) d\mu
\end{eqnarray} and $\varphi_l(x)=x^l \varphi(x).$

In~\cite[Lemma $5.5$]{JS} authors proved $L^p$ estimates for a maximal function variant corresponding to the operator $V^{\varphi_l}_{Rs}(k,m)$. With suitable modifications in their proof we get the following estimate for the squre function
\begin{eqnarray}\label{reduction1}\left\|\left(\int_{0}^{\infty}|V^{\varphi_l}_{Rs}(k,m)f(\cdot)|^2\frac{dR}{R}\right)^{\frac{1}{2}}\right
	\|_{p}\leq C(p)l!2^{(1+\epsilon)(\alpha(p)-\frac{1}{2})k}\|f\|_{p}
\end{eqnarray}
where $p$ and $\alpha(p)$ satisfy the assumption of Proposition~\ref{maxsquare2}, i.e., $p\geq \mathfrak p_n$ and $\gamma>\alpha(p)-\frac{1}{2}$. Moreover, the implicit constant in the inequality above is uniform with respect to $s\in [s_1,s_2]$. We skip the details. 

Next, for the square function corresponding to $X^k_{N,R,s}$ we have the the following estimate. 
\begin{lemma}\label{reduction2} For $1<p< \infty$, we have 
	$$\sup_{s\in [s_1,s_2]}\left\|\left(\int_{0}^{\infty}|X^k_{N,R,s}f|^2\frac{dR}{R}\right)^{\frac{1}{2}}\right\|_{p}\lesssim 2^{(1+\epsilon)(\alpha(p)-\frac{1}{2})k}\|f\|_{p}.$$
\end{lemma}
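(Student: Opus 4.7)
The plan is to move the $\mu$ integration outside the $L^p$ norm and the $R$ square function, then recognise the frozen-$\mu$ operator $Q_{N,R,s}^{k}(\mu)$ as structurally identical to the $V^{\varphi_l}(k,m)$ operators studied in \eqref{reduction1}, up to a polynomial-in-$\mu$ factor coming from the remainder $r_N$. The rapid decay of $\widehat{\psi}$ absorbs this factor.

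First, I apply Minkowski's integral inequality twice (once inside the square-function integral and once for the $L^p$ norm) to the representation \eqref{Xope} to obtain
\begin{equation*}
\left\|\left(\int_{0}^{\infty}|X^k_{N,R,s}f|^2\frac{dR}{R}\right)^{\frac{1}{2}}\right\|_{p}
\leq \int_{\R}|\widehat{\psi}(\mu)|\left\|\left(\int_{0}^{\infty}|Q_{N,R,s}^{k}(\mu)f|^2\frac{dR}{R}\right)^{\frac{1}{2}}\right\|_{p}d\mu.
\end{equation*}

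Next, I analyse the symbol of $Q_{N,R,s}^{k}(\mu)$ for fixed $\mu$, namely
\begin{equation*}
\Phi_{R,s}^{\mu,k,m}(\xi):=\varphi\!\left(2^{(1+\epsilon)k}\xi_{k,m}^{Rs}\right)\,r_N\!\left(2^{j}s^{2}\xi_{k,m}^{Rs}\mu\right).
\end{equation*}
On the support of the first factor we have $|2^{(1+\epsilon)k}\xi_{k,m}^{Rs}|\leq 1$, and since $k\geq j-2$ the argument of $r_N$ is bounded by $4s^{2}|\mu|\,2^{-\epsilon k}$. Together with the derivative estimate $|\frac{d^{q}r_N}{dx^{q}}(x)|\leq |x|^{N-q}$ and the Leibniz rule, this shows that $\Phi_{R,s}^{\mu,k,m}$ satisfies the same qualitative support and smoothness bounds as the symbol of $V^{\varphi_{l}}(k,m)$, with a prefactor at most $C_{N}(1+|\mu|)^{N}$ uniformly in $R>0$, $s\in[s_{1},s_{2}]$, $k\geq j-2$ and $0\leq m\leq [\nu^{-1}]+1$.

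Therefore the proof of \eqref{reduction1} applies verbatim (with $\varphi_{l}$ replaced by $\Phi_{R,s}^{\mu,k,m}$) and yields, for $p\geq \mathfrak p_n$ or $p=2$,
\begin{equation*}
\left\|\left(\int_{0}^{\infty}|Q_{N,R,s}^{k}(\mu)f|^{2}\frac{dR}{R}\right)^{\frac{1}{2}}\right\|_{p}\lesssim (1+|\mu|)^{N}\,2^{(1+\epsilon)(\alpha(p)-\frac{1}{2})k}\|f\|_{p},
\end{equation*}
with an implicit constant independent of $s\in[s_{1},s_{2}]$. Since $\psi\in C_{c}^{\infty}([\frac{1}{2},2])$ one has $|\widehat{\psi}(\mu)|\leq C_{M}(1+|\mu|)^{-M}$ for every $M$, so choosing $M=N+2$ makes $\int_{\R}|\widehat{\psi}(\mu)|(1+|\mu|)^{N}d\mu$ finite, and combining the two displays proves the lemma. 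The main obstacle is really the second step: verifying that the smoothness and support properties of $\Phi_{R,s}^{\mu,k,m}$ are uniform in $R$, $s$, $k$, $m$ so that the delicate argument that produced \eqref{reduction1} transfers without change, with the only price being the harmless $(1+|\mu|)^{N}$ growth in $\mu$.
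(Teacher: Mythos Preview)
Your strategy is genuinely different from the paper's, and it does not quite deliver the lemma as stated.

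The paper does \emph{not} reuse the delicate estimate \eqref{reduction1}. Instead it exploits the fact that $r_N$ is an $N$-th order remainder: on the support of $\varphi(2^{(1+\epsilon)k}\xi_{k,m}^{Rs})$ one has $|2^{j}s^{2}\xi_{k,m}^{Rs}\mu|\lesssim |\mu|\,2^{-j\epsilon}2^{-d(1+\epsilon)}$ (where $k=j+d$), so the symbol carries a huge built-in gain $2^{-j\epsilon N}2^{-d(1+\epsilon)N}$. With this gain in hand the paper treats $\{Q_{N,R,s}^{k}(\mu)\}_{R>0}$ as an $L^2(dR/R)$-valued convolution operator, proves $L^2$ boundedness by Plancherel, verifies a vector-valued Calder\'on--Zygmund kernel condition via integration by parts, and concludes by weak-$(1,1)$ plus duality. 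This route gives the bound for \emph{every} $1<p<\infty$, which is exactly what the lemma asserts.

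Your argument, by contrast, invokes \eqref{reduction1}, which is only available (and only claimed) for $p\geq\mathfrak{p}_n$ and $p=2$; you say so explicitly. Hence you have not proved the lemma in the stated range $1<p<\infty$. For the application to Proposition~\ref{maxsquare2} the restricted range would suffice, but as written there is a gap between what you establish and what Lemma~\ref{reduction2} claims. A second, smaller point: your reduction to \eqref{reduction1} requires that $\Phi_{R,s}^{\mu,k,m}(\xi)=\tilde\varphi_{s,\mu}\big(2^{(1+\epsilon)k}\xi_{k,m}^{Rs}\big)$ for a profile $\tilde\varphi_{s,\mu}(y)=\varphi(y)r_N(2^{j-(1+\epsilon)k}s^{2}\mu\,y)$ that depends on $s$ (and $j,k,\mu$) but not on $R$; you should say this explicitly and note that its $C^{N}$ norms are bounded uniformly in $s\in[s_1,s_2]$ so that the constant in \eqref{reduction1} (which tracks the $C^N$ norm of the profile) is indeed $\lesssim (1+|\mu|)^{N}$ uniformly. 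Without this observation the phrase ``applies verbatim'' is not justified.

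In summary: what each approach buys is clear. The paper's Calder\'on--Zygmund route trades precision for range --- it uses a crude $2^{(1+\epsilon)k(n+1)}$ kernel bound but wins the full $1<p<\infty$ by leaning on the extra $r_N$ decay. Your route recycles the sharp square-function machinery behind \eqref{reduction1}, which is more economical but confines you to $p\geq\mathfrak{p}_n$ or $p=2$; to prove the lemma as stated you would still need to fill in the remaining $p$'s, for which the paper's Calder\'on--Zygmund argument is the natural tool.
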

\begin{proof} 	
	Recall the definition of $X^k_{N,R,s}$ from equation~\eqref{Xope} and note that it is enough to show that 
	\begin{equation}\label{Qoper}\sup_{s\in [s_1,s_2]}\left\|\left(\int_{0}^{\infty}|Q_{N,R,s}^{k}f|^2\frac{dR}{R}\right)^{\frac{1}{2}}\right\|_{p}\lesssim (1+|\mu|)^{N}2^{-j\epsilon N}2^{-d(1+\epsilon)N}2^{(1+\epsilon)k(n+1)}\|f\|_{p}.\end{equation}
	where $k=j+d$.
	
	Define 
	\begin{equation*}\label{multiplierform}
		M_{s,\mu}(\xi)=2^{j\epsilon N}2^{d(1+\epsilon)N}\varphi\left(2^{(1+\epsilon)k}\xi_{k,m}^s\right)r_N\left(2^js^2\xi_{k,m}^s\mu\right)
	\end{equation*}
	and write $M_{R,s,\mu}(\xi)=M_{s,\mu}(\xi/R)$. 
	Note that 
	$$Q_{N,R,s}^{k}(\mu)f=2^{-j\epsilon N}2^{-d(1+\epsilon)N} f* (\mathcal{F}^{-1}M_{R,s,\mu}).$$ 
	Using the definition of $\xi_{k,m}^s$ and $\varphi$, one can verify that support of $M_{R,s,\mu}(\xi)$ is contained in the set $\{\xi:\frac{|\xi|}{R}\approx 1\}$. 
	
	We have the following estimate, see proof of [Lemma 5.5, \cite{JS}] for details. 
	$$\left|\frac{\partial^{\beta}}{\partial\xi^{\beta}}M_{s,\mu}(\xi)\right|\leq c L (1+|\mu|)^N 2^{(1+\epsilon)k|\beta|}$$ for all $|\beta|\leq N.$ Also, observe that the supremum over $s$ is outside the integral in~\eqref{Qoper} we can use Plancherel theorem to prove it when $p=2$. 
	
	Next, using integration by parts and the pointwise estimate on $\frac{\partial^{\beta}}{\partial\xi^{\beta}}M_{s,\mu}(\xi)$ as above we can show that
	\begin{equation}\label{pointwiseestimate2}
		\left|\sum_{i=1}^n\frac{\partial}{\partial x_i}\mathcal{F}^{-1}M_{R, s,\mu}(x)\right|\leq c LR^{n+1} (1+|\mu|)^N (1+2^{-(1+\epsilon)k}|Rx|)^{-N'}
	\end{equation}
	where $N'\leq N$.
	
	Choose $N'$ large enough in the inequality \eqref{pointwiseestimate2} to show that $\mathcal{F}^{-1}M_{R,s,\mu}$ verifies the conditions of vector-valued Calderon-Zygmund kernel. More precisely, for $x\neq 0,$ we have 
	$$\sup_{s\in [s_1,s_2]}\left(\int_{0}^{\infty}\left|\sum_{i=1}^n\frac{\partial}{\partial x_i}\mathcal{F}^{-1}M_{R,s,\mu})(x)\right|^2\frac{dR}{R}\right)^{\frac{1}{2}}\lesssim L (1+|\mu|)^N(2^{-(1+\epsilon)k}|x|)^{-n-1}.$$
	Using the vector-valued Calder\'{o}n-Zygmund theory we first show that $f\rightarrow \left(\int_{0}^{\infty}|Q_{N,R,s}^{k}f|^2\frac{dR}{R}\right)^{\frac{1}{2}}$ is weak type $(1,1)$ which implies the result for $1<p<2$. The estimate for  $2<p<\infty$ is obtained by duality. This yields the desired estimate \eqref{Qoper}. 
\end{proof}
From the decomposition of $U^{\varphi_0,j}_{Rs,R}(k,m)f$ (see~\eqref{decompositionofU} observe that the  
estimate~ \eqref{reduction1} and Lemma \ref{reduction2} yield the desired estimate for operator $U^{\varphi_0,j}_{Rs,R}(k,m)f$ as claimed in Proposition \ref{maxsquare2}. This takes care of first part in the expression \eqref{microdecomposition}. 

Next, we need to deal with the operators $P_{Rs,R}^{j,k}(\mu,m)$ to complete the proof of Proposition~\ref{maxsquare2}. This part follows using more or less the same arguments as used in proving Lemma~\ref{reduction2}.  With $m\leq [\nu^{-1}]+1$ we decompose $P_{Rs,R}^{j,k}(\mu,m)$ as follows. 
\begin{eqnarray*}\label{decompositionofP}
	P_{Rs,R}^{j,k}(\mu,m)f(x)&=&\sum_{l=0}^{N-1}\frac{1}{l!}s^22^{-d(1+\epsilon)l}2^{-j\epsilon l}\psi_j(k,m,l)D_{s,R}^{l,k}(\mu,m)f(x)\\
	&&+	\int_{\R}\int_{\R^n}\hat{\psi}(\mu')s_N^R(\mu,\mu')\varphi(2^{(1+\epsilon)k}\xi_{k,m}^{Rs})\hat{f}(\xi)e^{2\pi ix.\xi} e^{2\pi i2^j(1-a-s^22^{-(1+\epsilon)k}m)\mu'}d\mu'd\xi,
\end{eqnarray*}
where $\psi_j(k,m,l)=\frac{d^l\psi}{dx^l}(2^j(1-a-s^22^{-(1+\epsilon)k}m)),~ s_N^R(\mu,\mu')=r_N(2^k\xi_{k,m}^{Rs}\mu)r_N(2^js^2\xi_{k,m}^{Rs}\mu')$ and for $0\leq l\leq N-1$ the operator $D_{s,R}^{l,k}(\mu,m)$ is defined by 
\begin{equation*}\label{microterms1} D_{s,R}^{l,k}(\mu,m)f(x)=\int_{\R^n}\varphi_l(2^{(1+\epsilon)k}\xi_{k,m}^{Rs})r_N(2^k\xi_{k,m}^{Rs}\mu)e^{2\pi ix.\xi}\hat{f}(\xi)d\xi.
\end{equation*} 
The square function corresponding to each of these operators can be dealt with in a similar way as we did for $X_{N,R,s}^k$ in Lemma \ref{reduction2} with the bounds being uniform in $s\in[s_1,s_2]$. We leave the details here. This completes the proof of Proposition~\ref{maxsquare2}. \qed

Putting the estimates~\eqref{mainestiforII}, \eqref{gjj21} and Theorem~\ref{maxsquare} we get desired $L^p$ estimates for $\G^{\alpha}_j$ for $j\geq 2.$
\noindent
\subsection*{Boundedness of $\G^{\alpha}_0$:}
This part is dealt with by making minor modifications to the arguments used in the previous case. We need to carry put a similar decomposition with respect to the other variable $\eta$ in order to deal with the square function $\G^{\alpha}_0$. We leave the details to avoid repetion as this part can be completed following the steps in the previous section along with the correspondingideas from~\cite{JS}
\begin{remark}
The proof of Theorems~\ref{dim1} and~ \ref{maintheorem:sqr} can be simplified to a great extent for the particular case of $p_1=p_2=2$ as compared to other values of exponents. The details are left to the reader.
\end{remark}
\section{Proof of Theorem~\ref{dim1}}\label{sec:dim1}
We follow the proof of Theorem~\ref{maintheorem:sqr} in Section~\ref{sec:pqr} and arrive at equation~\eqref{decom4}. Then we estimate the square functions corresponding to terms $I_{j,R}$ and $II_{j,R}$. Consider the second part $II_{j,R}$ as we did in the proof of Theorem~\ref{maintheorem:sqr} in Section~\ref{sec:pqr}. Invoking \cite[Theorem $6.1$]{JS} we know that  
$$\left\|\sup_{R>0}\left(\int_{0}^{\sqrt{2^{-j+1}}}|\tilde{S}_{j,\beta}^{R,t}f(\cdot)t^{2\delta+1}|^2dt\right)^{\frac{1}{2}}\right\|_{p_1}\lesssim2^{j(\frac{1}{4}-\tilde{\alpha})}\|f\|_{p_1},$$
for $\beta>\frac{1}{2}$, when $2\leq p_1<\infty$ and for $\beta>\frac{1}{p_1}$, when $1<p_1<2$. Here $\tilde{\alpha}=\min\{\alpha,\delta+\frac{1}{2}\}$.

The remaining term is estimated using the boundedness of Bochner-Riesz square function~\cite{S,KanSun}. We have 
$$\left\|\left[\int_0^\infty\left(\int_{0}^{\sqrt{2^{-j+1}}}|A_{Rt}^{\delta}g(x)|^2dt\right)\frac{dR}{R}\right]^{\frac{1}{2}}\right\|_{p_2}\leq 2^{\frac{5}{4}(-j+1)}\|g\|_{p_2}$$
for $\delta>-\frac{1}{2}$ when $2\leq p_2<\infty$ and for $\delta>\frac{1}{p_2}-1$ when $1<p_2<2$.

From these estimates we get that 
\begin{eqnarray}\label{gjn=1} \left\|\left(\int_0^\infty|II_{j,R}|^2\frac{dR}{R}\right)^{\frac{1}{2}}\right\|_{p}\lesssim2^{j(\frac{1}{4}-\tilde{\alpha})+\frac{5}{4}(-j+1)}\|f\|_{p_2}\|g\|_{p_2},
	\end{eqnarray}
holds for for the claimed range of $p_1,p_2,p$ and $\alpha$ in Theorem~\ref{dim1}.

Next, consider
\begin{eqnarray*} 
	&&\left\|\left(\int_0^\infty |I_{j,R}|^2\frac{dR}{R}\right)^{\frac{1}{2}}\right\|_{p} \\
	&\leq&\nonumber 2^{-\frac{j}{4}} \left\|\left[\int_0^\infty\left(\int_{0}^{\sqrt{2^{-j+1}}}|{S}_{j,\beta}^{R,t}f(\cdot)t^{2\delta+1}|^2dt\right)\frac{dR}{R}\right]^{\frac{1}{2}}\right\|_{p_1} \left\|\sup_{R>0}\left(\frac{1}{R_j}\int_0^{R_j}|B_t^{\delta}g(x)|^2dt\right)^{\frac{1}{2}}\right\|_{p_2}.
\end{eqnarray*}
Recall that for the maximal function term in the above we have 
$$\left\|\sup_{R>0}\left(\frac{1}{R_j}\int_0^{R_j}|B_t^{\delta}g(x)|^2dt\right)^{\frac{1}{2}}\right\|_{p_2}\lesssim \|g\|_{p_2},$$
for $\delta>-\frac{1}{2}$ when $2\leq p_2<\infty$ and for $\delta>\frac{1}{p_2}-1$ when $1<p_2<2$.

Therefore, in order to complete the proof of Theorem~\ref{dim1} we need to prove the following estimate. 
\begin{eqnarray}\label{reducedoperatord1} \left\|\left[\int_0^\infty\left(\int_{0}^{\sqrt{2^{-j+1}}}|{S}_{j,\beta}^{R,t}f(\cdot)t^{2\delta+1}|^2dt\right)\frac{dR}{R}\right]^{\frac{1}{2}}\right\|_{p_1}\lesssim 2^{-(j-1)(\delta+\frac{1}{2})}\|f\|_{p_1}
\end{eqnarray}
when $\beta>\frac{1}{2}$ for $2\leq {p_1}<\infty$ and $\beta>\frac{1}{p_1}$ for $1<{p_1}<2$.

Let us rewrite the multiplier in the following way. 
\begin{eqnarray*}
	&&\psi\left(2^j\left(1-\frac{|\xi|^2}{R^2}\right)\right)\left(1-\frac{|\xi|^2}{R^2}-t^2\right)_+^{\beta-1}\frac{|\xi|^2}{R^2}\\ &=& (1-t^2)^{\beta}\psi\left(2^j\left(1-\frac{|\xi|^2}{R^2}\right)\right)\left(1-\frac{|\xi|^2}{R^2(1-t^2)}\right)_+^{\beta-1} \frac{|\xi|^2}{R^2(1-t^2)}
\end{eqnarray*}
Denote $${S'}_{j,\beta}^{R,s}f(x)=\int_{\R}\psi\left(2^j\left(1-\frac{|\xi|^2}{R^2}\right)\right)\left(1-\frac{|\xi|^2}{s^2R^2}\right)_+^{\beta-1}\frac{|\xi|^2}{s^2R^2}\hat{f}(\xi)e^{2\pi ix.\xi}d\xi.$$
Therefore, we get that 
\begin{eqnarray*}
	\int_{0}^{\sqrt{2^{-j+1}}}|{S}_{j,\beta}^{R,t}f(\cdot)t^{2\delta+1}|^2dt
	&=&\int_{0}^{\sqrt{2^{-j+1}}}|{S'}_{j,\beta}^{R,1-t^2}f(\cdot)|^2t^{4\delta+2}(1-t^2)^{2\beta}dt \\
	&\lesssim & 2^{-j(2\delta+1)} \int_{0}^{\sqrt{2^{-j+1}}}|{S'}_{j,\beta}^{R,1-t^2}f(\cdot)|^2dt.
\end{eqnarray*}
Note that 
$${S'}_{j,\beta}^{R,s}f(x)=B^{\psi}_{2^{-j},R}B_{Rs}^{\beta-1}f(x),$$
where
$$B^{\psi}_{2^{-j},R}(f)(x)=\int_{\R}\psi\left(2^j\left(1-\frac{|\xi|^2}{R^2}\right)\right)\hat{f}(\xi)e^{2\pi ix.\xi} d\xi$$
and 
$$B^{\beta}_{Rs}(f)(x)=\int_{\R}\left(1-\frac{|\xi|^2}{R^2s^2}\right)_+^{\beta}\frac{|\xi|^2}{R^2s^2}\hat{f}(\xi)e^{2\pi ix.\xi} d\xi.$$
Since $\psi\left(2^j\left(1-\frac{|\xi|^2}{R^2}\right)\right)$ is a compactly supported smooth function, an integration by parts argument gives us that $|B^{\psi}_{2^{-j},R}f(x)|\lesssim Mf(x).$ Consequently, we get that 
$$\left\|\left(\int_{0}^{\sqrt{2^{-j+1}}}\int_0^\infty|B^{\psi}_{2^{-j},R}B_{R(1-t^2)}^{\beta-1}f(\cdot)|^2\frac{dR}{R}dt\right)^{\frac{1}{2}}\right\|_{p_1}\lesssim \left\|\left(\int_{0}^{\sqrt{2^{-j+1}}}\int_0^\infty|M B^{\beta-1}_{R(1-t^2)}f(\cdot)|^2\frac{dR}{R}dt\right)^{\frac{1}{2}}\right\|_{p_1}.$$
From here the desired result follows using a change of variables $R(1-t^2)\to R$, vector-valued boundedness of the Hardy-Littlewood maximal operator $M$ and finally the boundedness of Bochner-Riesz square function. 

This takes care of $\G^{\alpha}_j$ for $j\geq 2$. We carry out the the same analysis for the square function $\G^{\alpha}_j$ as in the proof of Theorem~\ref{maintheorem:sqr} and use the one-dimensional results for Bochner-Riesz square function to conclude the proof of Theorem~\ref{dim1}. Since this part does not require any modification in the proof given in the previous section, we leave the details to the reader. \qed
%%%%%%%%%%%%%%%
\section{Analytic interpolation for $\mathcal G^{\alpha}$ and proof of Theorem~\ref{thm:inter}}\label{sec:inter} 
In this section we obtain new results for $\G^{\alpha}$ by using multilinear complex interpolation. In order to do so we will require $L^p$ estimates for $\G^{\alpha}$ for $\alpha>n-\frac{1}{2}$, which is the subject of  Theorem~\ref{thm:critical}, see Appendix for a proof. We assume it for a moment and complete the proof of Theorem~\ref{thm:inter}. 

%First, we linearise the operator $\G^{\alpha}$ so that we can make use of  multilinear complex interpolation theory. 
Let $z=\alpha+i\tau\in \C$ and consider the linearised operator with complex order 
	\begin{equation*}
		\mathcal T^z_b(f,g)(x)=\int_0^\infty\int_{\R^n\times\R^n}\left(1-\frac{|\xi|^2+|\eta|^2}{R^2}\right)^{z}_{+}\frac{|\xi|^2+|\eta|^2}{R^2}\hat{f}(\xi)\hat{g}(\eta)e^{2\pi ix.(\xi+\eta)}d\xi d\eta b(x,R)\frac{dR}{R},
	\end{equation*}
where $b(x,R)\in L^2((0,\infty),dR/R)$ with $\int_0^\infty|b(x,R)|^2\frac{dR}{R}\leq 1.$ 

We need to track down the bounds for the operator norm $\|\G^z\|_{L^{p_1}\times L^{p_2}\rightarrow L^p}$ in terms of the imaginary part of $z$ in Theorems~\ref{maintheorem:sqr} and \ref{thm:critical}. This is possible as we have precise formula for constants in terms of the parameter $z$ in the decomposition~\eqref{decom3} of the bilinear multiplier symbol $\left(1-\frac{|\xi|^2+|\eta|^2}{R^2}\right)^{z}_{+}$ which is given by  
$$c_z=C_{\beta,\delta+\iota\tau}=\frac{\Gamma(\beta+\delta+1+\iota\tau)}{\Gamma(\beta)\Gamma(\delta+1+\iota\tau)},$$ where $\alpha=\beta+\delta$ with $\beta>\frac{1}{2}$ and $\delta>-\frac{1}{2}$, see equation~\eqref{constant:stein} and \cite[page $279$]{SW} for details. Using estimates of gamma function from [\cite{Grafakosclassical}, page no. 422-423] we have 
	\begin{eqnarray*}
		|\Gamma(\beta+\delta+1+\iota\tau)|\leq |\Gamma(\beta+\delta+1)|\quad\text{and}\quad \frac{1}{|\Gamma(\delta+1+\iota\tau)|}\leq \frac{e^{C(\delta)|\tau|^{2}}}{|\Gamma(\delta+1)|},
	\end{eqnarray*}
	where $C(\delta)=\max\{(1+\delta)^{-2},(1+\delta)^{-1}\}$.
	 
Therefore, $|c_z|$ increases atmost by a constant multiplle of $e^{C|\tau|^2}$ where  $C$ is a fixed constant.  

Further, we need to employ Calder\'{o}n-Zygmund method, in which we need to use estimates on the kernel of $\G^{\alpha}$. Again for this part, we have the following estimates of Bessel functions, for $Re(z)>-\frac{1}{2}$ and $|x|\geq1$
	$$|J_z(|x|)|\leq C_{Re(z)}e^{a_1|Im(z)|^2}|x|^{-\frac{1}{2}}$$
	where $a_1=\max\{(Re(z)+\frac{1}{2})^{-2},(Re(z)+\frac{1}{2})^{-1}+\pi/2\}$ and for $Re(z)>-\frac{1}{2}$ and $0<|x|\leq1$
	$$|J_z(|x|)|\leq C'_{Re(z)}e^{a_2|Im(z)|^2}|x|^{Re(z)}$$
	where $a_2=\max\{(Re(z)+\frac{1}{2})^{-2},(Re(z)+\frac{1}{2})^{-1}\}$.
Note that, for the kernel estimates we have constants with growth atmost by $e^{C_2|\tau|^2}$. 

With this discussion on bounds for various constants as above, an inspection of proof of Theorem~\ref{maintheorem:sqr} yields that 	$\|\G^z\|_{L^{2}\times L^{2}\rightarrow L^1}$ increases by a constant multiple of $e^{C|\tau|^2}$, where $z=\alpha+i\tau$ with $\alpha>0$. Consequently, the bilinear Calder\'{o}n-Zygmund theory~(see \cite{GT} for details) yields that  $\|\G^z\|_{L^{p_1}\times L^{p_2}\rightarrow L^{p,\infty}}$ increases atmost as $e^{c|\tau|^2}$ for some $c>0$ where $1\leq p_1,p_2<\infty$  and $\alpha>n-\frac{1}{2}$. Finally, the real interpolation between these two estimates gives us similar bounds for the quantity $\|\G^z\|_{L^{p_1}\times L^{p_2}\rightarrow L^{p}}$ for all $1<p_1,p_2<\infty$ and $\alpha>n-\frac{1}{2}$. These estimates  ensure the admissible growth while we try to use bilinear analytic interpolation theorem (see Theorem 7.2.9,  \cite{Grafakosmodern}) for $\G^z$. 

Denote $V=\{z\in\C:0<Re(z)<1\}$. For a fixed $\delta>0$ and $z\in \overline{V}$  consider the family of bilinear operators $U^z(f,g)=\mathcal T_b^{(n-\frac{1}{2})z+\delta}(f,g)$, where $f$ and $g$ are Schwartz class functions.
Consider 
	\begin{eqnarray*}
		&&|U^z(f,g)(x)|\\
		&=& \left|\int_0^\infty\int_{\R^n\times\R^n}\left(1-\frac{|\xi|^2+|\eta|^2}{R^2}\right)^{(n-\frac{1}{2})z+\delta}_{+}\frac{|\xi|^2+|\eta|^2}{R^2}\hat{f}(\xi)\hat{g}(\eta)e^{2\pi ix.(\xi+\eta)}d\xi d\eta~ b(x,R)\frac{dR}{R}\right|\\
		%&\leq& \int_{\R^n\times\R^n}\left|\int_{\sqrt{|\xi|^2+|\eta|^2}}^\infty \left(1-\frac{|\xi|^2+|\eta|^2}{R^2}\right)^{(n-\frac{1}{2})z+\delta}_{+}\frac{|\xi|^2+|\eta|^2}{R^2}b(x,R)\frac{dR}{R}\right||\hat{f}(\xi)\hat{g}(\eta)|d\xi d\eta\\
		&\leq& \int_{\R^n\times\R^n}\left(\int_{\sqrt{|\xi|^2+|\eta|^2}}^\infty \left|\left(1-\frac{|\xi|^2+|\eta|^2}{R^2}\right)^{(n-\frac{1}{2})z+\delta}_{+}\frac{|\xi|^2+|\eta|^2}{R^2}\right|^2\frac{dR}{R}\right)^{\frac{1}{2}} |\hat{f}(\xi)\hat{g}(\eta)|d\xi d\eta.
	\end{eqnarray*}
Here in the above we have used Cauchy-Schwarz inequality in $R$ variable and the fact that $\|b\|_{L^2}\leq 1$. Making a change of variable $\frac{\sqrt{|\xi|^2+|\eta|^2}}{R}\to s,$ we get
	\begin{eqnarray*}
		\int_{\sqrt{|\xi|^2+|\eta|^2}}^\infty \left|\left(1-\frac{|\xi|^2+|\eta|^2}{R^2}\right)^{(n-\frac{1}{2})z+\delta}_{+}\frac{|\xi|^2+|\eta|^2}{R^2}\right|^2\frac{dR}{R}
		%&=& \int_0^1 \left((1-s^2)^{Re((n-\frac{1}{2})z+\delta)}s^2\right)^2\frac{ds}{s}\\
		&=& \int_0^1 (1-s^2)^{Re((2n-1)z+2\delta)}s^3~ds.
	\end{eqnarray*}
	Since $Re((n-\frac{1}{2})z+\delta)>0$, the integral in the above expression is finite. Therefore, we get that 
	$$|U^z(f,g)(x)|\lesssim \|\hat{f}\|_{L^1}\|\hat{g}\|_{L^1}.$$
	From this estimate we get that for the family $\{U^z\}$  and $0\leq a<\pi$, we have
	$$\sup_{z\in \overline{V}}\frac{\log|(U^z(f,g)(x)|}{e^{a|Im(z)|}}<\infty.$$ 
	
	From this estimate it is clear that the map $z\to U^z(f,g)$ is analytic in $V$, continuous and bounded on the closure $\overline{V}$ and the family $\{U^{z}\}_{z\in\overline{V}}$ is of admissible growth, see [page 513, \cite{Grafakosmodern}]. 
	
	Note that when $Re(z)=0$, the operator $U^z$ maps $L^2(\R^n)\times L^2(\R^n)\to L^1(\R^n)$ with norm bounded by  $K_0(0+i\tau)=C_{n,\delta}e^{C|\tau|^2}$ for some $C, C_{n,\delta}>0$. When $Re(z)=1$, we have that $U^z$ maps $L^{p_{1}}(\R^n)\times L^{p_{2}}(\R^n)\rightarrow L^{p}(\R^n)$ with a bound $K_1(1+i\tau)=C_{n,\delta}e^{c |\tau|^2}$ for some $c>0$. 
	
	Given a triplet $(q_1,q_2,q)$ of exponents satisfying the H\"{o}lder relation we choose another triplet  $(p_1, p_2,p)$ satisfying the H\"{o}lder relation such that  $\frac{1}{q_{i}}=\frac{1-\theta}{2}+\frac{\theta}{p_{i}}, i=1,2$ where $0\leq \theta \leq 1$. Now applying the bilinear analytic interpolation 
	\cite[Theorem $7.2.9$]{Grafakosmodern} we get that 
	$\mathcal T_b^{\alpha(q_{1},q_{2})}$ is bounded from $L^{q_{1}}(\R^n)\times L^{q_{2}}(\R^n)\to L^{q}(\R^n)$
	where $0<\alpha(q_{1},q_{2})<n-\frac{1}{2}$ and it is determined by the relation $(n-\frac{1}{2})\theta+\delta=\alpha(q_{1},q_{2})$ for arbitrarily small $\delta>0$. This in turn implies that $\G^{\alpha(q_{1},q_{2})}$ is bounded from $L^{q_{1}}(\R^n)\times L^{q_{2}}(\R^n)\to L^{q}(\R^n)$. 

We would like to remark that the discussion above gives us new results for $\G^{\alpha}$ with $\alpha$ below the critical index when either of the exponents $q_1$ or $q_2$ is less than $2$.  In particular, if we consider the case of $1<q_1=q_2<2$, we can write down the explicit range of the index $\alpha$. Write $q_1=q_2=q$. Note that we can choose $p_1=p_2$ arbitrarily close to $1$, say $1+\epsilon$ where $\epsilon$ is very small, then from the convex combination relation we get that $ \theta=\frac{1+\epsilon}{1-\epsilon}(\frac{2}{q}-1)$. Hence we get that the range of $\alpha$ is given by $\alpha=\alpha(q_1,q_2)>(2n-1)(\frac{1}{q}-\frac{1}{2})$. This proves Theorem~\ref{thm:inter}. 
\qed
%%%%%%%%%%%%%%%%%%%%%%%%%%%%

\section{Necessary conditions: Proof of Proposition~\ref{prop:neccond}}\label{sec:nec}
The following result will be required in our analysis. %This result is expected and may be interesting in its own right. 
\begin{lemma}\label{improving}
	Let $\alpha, p_1, p_2, p$ be a given parameters for which the bilinear square function $\mathcal G^{\alpha}$ maps $L^{p_1}(\R^n) \times L^{p_2}(\R^n)$ to $L^p(\R^n)$. Then $\mathcal G^{\alpha+1}$ is also bounded from $L^{p_1}(\R^n) \times L^{p_2}(\R^n)$ to $L^p(\R^n)$ with norm bounded by a constant multiple of $\|\mathcal G^{\alpha}\|_{L^{p_1}\times L^{p_2}\rightarrow L^p}$. 
\end{lemma}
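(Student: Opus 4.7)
The plan is to establish the pointwise domination
$$\mathcal G^{\alpha+1}(f,g)(x) \leq \mathcal G^\alpha(f,g)(x)$$
via a subordination identity for the kernels, from which the lemma follows at once by taking $L^p$ norms. The heart of the argument is the elementary Beta-function identity
\begin{equation*}
(1-u)_+^{\alpha+1} = (\alpha+1)\int_0^1 t^\alpha (1-u/t)_+^\alpha \, dt, \qquad \alpha > -1,\; u\geq 0,
\end{equation*}
which one verifies directly: for $0\leq u<1$ the right side equals $(\alpha+1)\int_u^1 (t-u)^\alpha \,dt = (1-u)^{\alpha+1}$, and both sides vanish for $u\geq 1$.

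Next I would apply this with $u=(|\xi|^2+|\eta|^2)/R^2$ and perform the change of variable $r=R\sqrt{t}$. Multiplying both sides by $\frac{|\xi|^2+|\eta|^2}{R^2}$, the factor $\frac{|\xi|^2+|\eta|^2}{r^2}\left(1-\frac{|\xi|^2+|\eta|^2}{r^2}\right)_+^\alpha$ reassembles inside the integral to produce $\widehat{\mathcal K_r^\alpha}(\xi,\eta)/(2(\alpha+1))$. A straightforward computation then yields the scale-invariant identity
\begin{equation*}
\mathcal K_R^{\alpha+1}*(f,g)(x) = 2(\alpha+2)\int_0^1 s^{2\alpha+3}\, \mathcal K_{sR}^\alpha * (f,g)(x)\, ds.
\end{equation*}

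Finally I would invoke Minkowski's inequality in $L^2(dR/R)$ together with the dilation invariance of $dR/R$ under $R\mapsto sR$ (for each fixed $s>0$):
\begin{equation*}
\mathcal G^{\alpha+1}(f,g)(x) \leq 2(\alpha+2)\int_0^1 s^{2\alpha+3}\left(\int_0^\infty |\mathcal K_{sR}^\alpha*(f,g)(x)|^2\tfrac{dR}{R}\right)^{1/2}\! ds = 2(\alpha+2)\int_0^1 s^{2\alpha+3}\,ds\cdot \mathcal G^\alpha(f,g)(x).
\end{equation*}
Since $2(\alpha+2)\int_0^1 s^{2\alpha+3}\,ds = 1$, this gives the desired pointwise bound, and taking $L^p$ norms yields $\|\mathcal G^{\alpha+1}\|_{L^{p_1}\times L^{p_2}\to L^p}\leq \|\mathcal G^\alpha\|_{L^{p_1}\times L^{p_2}\to L^p}$.

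There is no real obstacle: the proof reduces to the subordination identity above, and the remaining steps are a routine Minkowski-plus-rescaling argument. The only point to watch is that $\alpha>-1$ is enough for both the Beta identity and for the convergence of $\int_0^1 s^{2\alpha+3}\,ds$, so the argument applies throughout the natural range of the square function.
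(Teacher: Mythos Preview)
Your proof is correct and follows essentially the same route as the paper: both establish the pointwise inequality $\mathcal G^{\alpha+1}(f,g)\lesssim \mathcal G^\alpha(f,g)$ via the subordination identity $\mathcal K_R^{\alpha+1}*(f,g)(x)=C_\alpha\int_0^1 t^{2\alpha+3}\,\mathcal K_{tR}^\alpha*(f,g)(x)\,dt$ (the paper cites it from Stein, you derive it directly from the Beta identity), and then apply Minkowski plus the dilation invariance of $dR/R$. Your version is slightly more explicit, tracking the constant to get the sharp bound $\mathcal G^{\alpha+1}\leq \mathcal G^\alpha$ with constant $1$.
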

\begin{proof}
	For $\rho>\alpha+\frac{1}{2}$ we have the following identity for the Bochner-Riesz multiplier symbol from~[\cite{St}, page $105$]
	\begin{eqnarray*}
		\left(1-|\xi|^2\right)^{\rho}_{+}=C_{\rho, \alpha}\int_0^1(1-t^2)^{\rho-\alpha-1}t^{2\alpha+1}\left(1-\frac{|\xi|^2}{t^2}\right)^{\alpha}_{+}dt
	\end{eqnarray*}
	Using this identity for $\rho=\alpha+1>\alpha+\frac{1}{2}$ we get that  
	\begin{eqnarray*}
		\mathcal K^{\alpha+1}_R\ast (f,g)(x)&=& C_{\alpha}	\int_{\R^n\times\R^n}\frac{|\xi|^2+|\eta|^2}{R^2}\left(1-\frac{|\xi|^2+|\eta|^2}{R^2}\right)^{\alpha+1}_{+}\hat{f}(\xi)\hat{g}(\eta)e^{2\pi ix.(\xi+\eta)}d\xi d\eta\\
		&=& C_{\alpha}	\int_0^1t^{2\alpha+1}\int_{\R^n\times\R^n}\frac{|\xi|^2+|\eta|^2}{R^2}\left(1-\frac{|\xi|^2+|\eta|^2}{t^2R^2}\right)^{ \alpha}_{+}\hat{f}(\xi)\hat{g}(\eta)e^{2\pi ix.(\xi+\eta)}d\xi d\eta dt\\
		&=& C_{ \alpha}	\int_0^1t^{2\alpha+3} \mathcal K^{\alpha}_{tR}\ast (f,g)(x)dt
	\end{eqnarray*}
	This relation along with elementary arguments gives us the pointwise relation. 
	\begin{eqnarray*}
		\mathcal G^{\alpha+1}(f,g)(x)\lesssim \mathcal G^{\alpha}(f,g)(x).
	\end{eqnarray*}
	%This completes the proof of lemma. 
\end{proof}
\subsection*{Proof of Proposition~\ref{prop:neccond} part $(1)$:}
	Let $\psi$ be a function such that $\widehat{\psi}$ is compactly supported and smooth with  $\widehat{\psi}(\xi)\equiv 1$ for $|\xi|\leq 2$. Recall that $\mathcal{K}^{\alpha}_R(y)= c_{\alpha}R^{2n}\left(\frac{J_{\alpha+n}(2\pi R|y|)}{(2\pi R|y|)^{\alpha+n}}-\frac{J_{\alpha+n+1}(2\pi R|y|)}{(2\pi R|y|)^{\alpha+n+1}}\right), y\in \R^{2n}$. Using the asymptotic properties of the Bessel functions (see \cite[page $432$]{Grafakosclassical}), for large $|(x,x)|$ we get that  $$\mathcal{K}^{\alpha}_R*(\psi,\psi)(x) = c_1 \frac{\cos \left(2\pi R|(x,x)|+\frac{\pi}{2}(\alpha+n+\frac{1}{2})\right)}{|(x,x)|^{-(\alpha+n+\frac{1}{2})}}+O\left(\frac{1}{|(x,x)|^{-(\alpha+n+3/2)}}\right).$$ 
	
	Let $M\in \N$ and define $A_M(R)=\{x\in \R^n: \left||x|- \frac{M-\frac{\pi}{2}(\alpha+n+\frac{1}{2})}{\sqrt{2}R}\right|<\frac{\delta}{2\pi R}\}$ for $\delta<<1/4$.
	From the boundedness of $\G^{\alpha}$ we have $$\left\|\left(\int_{1}^2 \left|\mathcal{K}^{\alpha}_R*(\psi,\psi)(x)\right|^2 dR\right)^{\frac{1}{2}}\right\|_p\lesssim \|\psi\|_{p_1}\|\psi\|_{p_2}<\infty.$$ 
	This implies that \begin{equation}\label{necc}\limsup_{M\rightarrow\infty}\int_1^2\int_{x\in A_M(R)}|x|^{-p(\alpha+n+\frac{1}{2})}dx dR <\infty\end{equation}  
	Since $R$ is localised between $[1,2]$, the condition \eqref{necc} implies that $\alpha>n\left(\frac{1}{p}-1\right)-\frac{1}{2}$.
	
	Next, we need to show that we must have $\alpha>-\frac{1}{2}.$ %Indeed, in Section~\ref{appl} Remark \ref{NC} gives us that we know that $\alpha\geq -\frac{1}{2}$. Here we give a uniform argument which will also rule out the possibility that $\alpha=-\frac{1}{2}.$ 
	Consider $$
	\int_1^2|\mathcal B^{\alpha}_R(f,g)(x)|^2dR=\int_1^2\left|\int_{\R^n\times\R^n}\left(1-\frac{|\xi|^2+|\eta|^2}{R^2}\right)^{\alpha}_+\hat{f}(\xi)\hat{g}(\eta)e^{2\pi ix.(\xi+\eta)}d\xi d\eta\right|^2 dR $$
	Adding and subtracting terms $\left(1-\frac{|\xi|^2+|\eta|^2}{R^2}\right)^{\alpha+j}_+$ from $j=1$ to $N$ with $\alpha+N> n-\frac{1}{2}$ we get that 
	  \begin{equation}\label{SS}
		\left(\int_1^2|\mathcal B^{\alpha}_R(f,g)(x)|^2dR\right)^{\frac{1}{2}}\leq \sum_{j=0}^{N-1}\mathcal{G}^{\alpha+j}(f,g)(x)+\left(\int_1^2|\mathcal B^{\alpha+N}_R(f,g)(x)|^2dR\right)^{\frac{1}{2}}	\end{equation}
	Lemma~\ref{improving} and the estimates on $\K^{\alpha}_R$ for  $\alpha>n-\frac{1}{2}$ yield that $(f,g)\rightarrow \left(\int_1^2|\mathcal B^{\alpha}_R(f,g)(x)|^2dR\right)^{\frac{1}{2}}$ is bounded for the triplet $(p_1,p_2,p)$ under consideration.
	Using linearization argument as earlier we see that for any $b\in L^2([1,2], dR)$ the bilinear operator 
	$$\mathcal T_b^{\alpha}(f,g)(x)=\int_1^2 b(R)\left(\int_{\R^n\times\R^n}\left(1-\frac{|\xi|^2+|\eta|^2}{R^2}\right)^{\alpha}_+\hat{f}(\xi)\hat{g}(\eta)e^{2\pi ix.(\xi+\eta)}d\xi d\eta\right) dR$$
	is bounded from $L^{p_1}(\R^n)\times L^{p_2}(\R^n)$ into $L^p(\R^n)$. Note that if $\widehat{f}$ and ${\widehat{g}}$ are compactly supported smooth functions, then $\mathcal T_b^{\alpha}(f,g)$ makes sense for $\alpha\geq -\frac{1}{2}$.
	
	Let $u,v\in \mathbb S^{n-1}$ and $\rho>0$.  Using translation and dilation arguments for bilinear multiplier operators we see that the operator 
	\begin{equation*}
		\mathcal T_{b,B_{u,v,\rho}}^{\alpha}(f,g)(x)=\int_1^2 b(R)\left(\int_{\R^n\times\R^n}\left(1-\frac{|\xi-\rho u|^2 + |\eta-\rho v|^2}{2R^2\rho^2}\right)^{\alpha}_+\hat{f}(\xi)\hat{g}(\eta)e^{2\pi ix.(\xi+\eta)}d\xi d\eta\right)dR.
	\end{equation*}
	is bounded form $L^{p_1}(\R^n)\times L^{p_2}(\R^n)$ into $L^p(\R^n)$ with operator norm uniformly bounded in $u,v\in \mathbb S^{n-1}$ and $\rho>0$. 
	
	Note that the multiplier $\left(1-\frac{|\xi-\rho u|^2 + |\eta-\rho v|^2}{2R^2\rho^2}\right)^{\alpha}_+$ converges to $(1-\frac{1}{R^2})^{\alpha}_+$ as $\rho\rightarrow \infty$ for any fixed $\xi,\eta$. The boundedness of $\mathcal T_{b,B_{u,v,\rho}}^{\alpha}(f,g)$ is bounded on the triplet $(p_1,p_2,p)$ and Fatou's lemma imply that $$\left|\int_1^2 \left(1-\frac{1}{R^2}\right)^{\alpha}_+ b(R)dR\right|\lesssim \|b\|_2,$$
	for all $b\in L^2([1,2])$. The Riesz representation theorem for $L^2([1,2])$ implies that $\alpha>-\frac{1}{2}$. This completes the proof of part $(1)$ of Proposition~\ref{prop:neccond}. 
\qed
%Our next result exploits an example given in \cite{JLV} and we get another set of necessary conditions in the Banach range. 
%\begin{proposition}\label{nec:thm2}
%	Let $n\geq 2$ and $p_1,p_2,p$ be in the Banach triangle. If $\G^\alpha$ is bounded from  $L^{p_1}(\R^n)\times L^{p_2}(\R^n)$ to $ L^{p}(\R^n)$, then 
%	$$\alpha>\max\left\{\frac{n}{2}-\frac{n}{p_1}-\frac{n}{2p_2}-1,\,\frac{n}{2}-\frac{n}{2p_1}-\frac{n}{p_2}-1,- \frac{1}{2}\right\}.$$
%	\begin{proof}
\subsection*{Proof of Proposition~\ref{prop:neccond} part $(2)$:}
		We already have that $\alpha>-\frac{1}{2}$. Therefore, we need to prove the remaining two conditions. Consider the linearised local square function given by  
		$$\mathcal L^\alpha_b(f,g)(x)=\int_1^2 b(R)\left(\int_{\R^n\times\R^n}\left(1-\frac{|\xi|^2+|\eta|^2}{R^2}\right)^{\alpha}_{+}\frac{|\xi|^2+|\eta|^2}{R^2}\hat{f}(\xi)\hat{g}(\eta)e^{2\pi ix.(\xi+\eta)}d\xi d\eta\right) dR,$$
		where $b\in L^2([1,2])$. 
		
		For $\eta=(\eta_1,\eta_2,\dots, \eta_n)\in \R^n$ write $\eta=(\eta',\eta_n),$ where $\eta'=(\eta_1,\eta_2,\dots, \eta_{n-1})$. Let $\epsilon>0$ be a small number and let   $\psi_\epsilon(\xi,\eta)=\phi_1(\xi/\epsilon)\phi_2(\eta'/\epsilon)\phi_3((1-\eta_d)/\epsilon)$ where  $\phi_1, \phi_2$ and $\phi_3$
		are smooth functions supported in the ball $B(0,1)$. 
		
		Consider the bilinear operator 
		$$\tilde{\mathcal L}^\alpha_b(f,g)(x)=\int_1^2 b(R)\left(\int_{\R^n\times\R^n}\psi_\epsilon(\xi,\eta)\left(1-\frac{|\xi|^2+|\eta|^2}{R^2}\right)^{\alpha}_{+}\frac{|\xi|^2+|\eta|^2}{R^2}\hat{f}(\xi)\hat{g}(\eta)e^{2\pi ix.(\xi+\eta)}d\xi d\eta \right)dR.$$
		Observe that by clubbing $\phi_1(\xi/\epsilon)$ with $\hat f(\xi)$ and $\phi_2(\eta'/\epsilon)\phi_3((1-\eta_d)/\epsilon)$ with $\hat g(\eta)$, the $L^p$ boundedness of $\mathcal L^\alpha_b$ implies the corresponding $L^p$ boundedness of the operator $\tilde{\mathcal L}^\alpha_b$. Denote 
		$$\mathcal{K}_{\psi,R}^{\alpha}(\xi,\eta)=\int_{\R^n\times \R^n} \psi_\epsilon(\xi,\eta)\left(1-\frac{|\xi|^2+|\eta|^2}{R^2}\right)^{\alpha}_{+}\frac{|\xi|^2+|\eta|^2}{R^2}e^{-2\pi i(y\cdot\xi+z\cdot\eta)} d\xi d\eta. $$ 
		
		Let $\phi$ be a smooth and compactly supported function with $\phi=1$ on $B(0,2)$ and observe that 
		\begin{eqnarray}
			\left|	\int_{\R^n} \tilde{\mathcal L}^\alpha_b(f,g)(x) \check{\phi}(x) dx\right|
			&=& \nonumber \left| \int_1^2\left(\int_{\R^n\times \R^n} \mathcal{K}_{\psi,R}^\alpha(y,z) f(y)g(z)dy dz \right)b(R)dR\right| \\
			&\lesssim &\label{nec111} \|b\|_2\|f\|_{p_1}\|g\|_{p_2}.
		\end{eqnarray}
		Consider a narrow cone  $\mathcal C=\{(y,z)\in \R^n\times \R^n: \sqrt{|y|^2+|z'|^2}\leq \epsilon_0 z_n\},$ where $\epsilon_0<\epsilon$. Using the method of stationary phase we know that for $w=(y,z)\in \mathcal C$ the kernel $\mathcal{K}_{\psi,R}^\alpha$ satisfies the following estimate 
		$$\mathcal{K}_{\psi,R}^\alpha(w)=R^{2n}{e^{i2\pi R|w|}}{|Rw|^{-\frac{2n+1}{2}-\alpha}} + O(|Rw|^{-\frac{2n+1}{2}-\alpha-1}).$$ See \cite[Lemma $2.3.3$]{CSO} for details.
		Let $M\gg \epsilon_0^{-100}$ be a large number. Consider the sets $A_M=\{y: (\epsilon_0/10)M^{\frac{1}{2}}\leq |y|< (\epsilon_0/5)M^{\frac{1}{2}}\}$ and $B_M=\{z: (\epsilon_0/10)M\leq |z|\leq (\epsilon_0/5)M,\, |z'|\leq (\epsilon_0/10)|z_n| \}.$ Note that $A_M\times B_M\subset \mathcal C$. Let $f(y)=\chi_{A_M}(y)$ and $g(z)=\chi_{B_M}(z)e^{-i2\pi|z|}.$ 
		Thus,
		\begin{eqnarray*}
			&&	\left|\int_1^2 \int_{\R^n\times \R^n} \mathcal{K}_R^\alpha(y,z) f(y)g(z)dy dz b(R)dR\right|\\
			&\gtrsim &  \left|\int_1^2\int_{A_M \times B_M}R^{2n}e^{i2\pi(R|w|-|z|)}
			{|Rw|^{-\frac{2n+1}{2}-\alpha}}  dydzb(R)dR\right|\\
			&=& \left|\int_{A_M \times B_M}e^{-i2\pi|z|}|w|^{-\frac{2n+1}{2}-\alpha}\left(\int_1^2 R^{2n}e^{i2\pi R|w|}
			R^{-\frac{2n+1}{2}-\alpha} b(R) dR\right) dydz\right|
		\end{eqnarray*}
		Choose $b$ such that $b(R)R^{-\frac{2n+1}{2}-\alpha} R^{2n}=(R-1)^{-\delta}$ where $0<\delta<\frac{1}{2}$, then a change of variables argument gives us that 
		$$\int_1^2 R^{2n}e^{i2\pi R|w|}
		R^{-\frac{2n+1}{2}-\alpha} b(R) dR=e^{i2\pi|w|}\int_0^1e^{i2\pi R|w|} R^{-\delta}dR.$$ 
		Now we use the asymptotic estimates on $\int_0^1e^{i2\pi R|w|} R^{-\delta}dR$. First, split the integral into two parts, one where $2\pi R|w|<<1/4$ and the other one being its complement, then we use integration by parts argument on the part where $2\pi R|w|$ is large. This gives us that $$\int_1^2 R^{2n}e^{i2\pi R|w|}
		R^{-\frac{2n+1}{2}-\alpha} b(R) dR= c e^{2\pi i |w|}|w|^{-1+\delta} +O(|w|^{-1})$$ for $|w|\rightarrow\infty$.
		This estimate yields  
		\begin{eqnarray*}
			&&\left|\int_{A_M \times B_M}e^{-i2\pi|z|}|w|^{-\frac{2n+1}{2}-\alpha}\left(\int_1^2 R^{2n}e^{i2\pi R|w|}
			R^{-\frac{2n+1}{2}-\alpha} b(R) dR\right) dydz\right|\\
			&\gtrsim &\left|\int_{A_M \times B_M}e^{i2\pi(|w|-|z|)}|w|^{-\frac{2n+1}{2}-\alpha}|w|^{-1+\delta}dw\right|\end{eqnarray*}
		As shown in \cite[Proposition $4.6$]{JLV} we can verify that on $A_M\times B_M$ the term $||w|-|z||$ is small. This implies that
		\begin{eqnarray*}
			\left|\int_{A_M \times B_M}e^{i2\pi(|w|-|z|)}|w|^{-\frac{2n+1}{2}-\alpha}|w|^{-1+\delta}dw\right|
			& \gtrsim & M^{n/2-\alpha-3/2+\delta}			\end{eqnarray*}
		This estimate along with \eqref{nec111}  gives us  $$M^{-\alpha+\frac{n}{2}-3/2+\delta}\lesssim M^\frac{n}{2p_1} M^\frac{n}{p_2}.$$ 
		Since $M$ is arbitrarily large and $\delta$ can be as close to $\frac{1}{2}$ as we need, we get that   
		$\alpha>\frac{n}{2}-\frac{n}{2p_1}-\frac{n}{p_2}-1$. Using the symmetry between $\xi$ and $\eta$ we get the desired result. 
\section*{Appendix: Proof of Theorem~\ref{thm:critical}}\label{sec:critical}
%We have the following sparse domination result for the square function $\G^{\alpha}$ for $\alpha>n-\frac{1}{2}.$ 
%We need some definitions and notations about the sparse operators. 
We refer the reader to~\cite{LN} for details about sparse domination principle. 

\noindent
{\bf Sparse family of dyadic cubes:} Consider a dyadic lattice of cubes in $\R^n$. Then a family of dyadic cubes $\mathcal{S}$ is said to $\eta-$sparse, with $0<\eta<1$ if for every cube $Q\in \mathcal{S}$, there exists a measurable subset $E_Q\subset Q$ such that $|E_Q| \geq (1-\eta)|Q|$ and $\{E_Q\}_{Q\in\mathcal{S}}$ are pairwise disjoint.

For a locally integrable function $f$ and a finite cube $Q$ we use the notation $\langle f\rangle_Q$ to denote the average of $f$ over $Q$ give by 
$$\langle f\rangle_Q=\frac{1}{|Q|}\int_Q|f(x)|dx,$$
where $|Q|$ stands for the measure of the cube $Q$. We will always assume that cubes have their sides parallel to coordinate axes. Also, for $\lambda>0$ we use the notation $\lambda Q$ to denote the concentric cube with $Q$ such that $|\lambda Q|=\lambda^n |Q|.$ For a cube $Q\subseteq \R^n$ the notation $Q^2$ stands for the cartesian product $Q\times Q$. 

\noindent
{\bf Bilinear sparse operator:} Let $\mathcal{S}$ be a $\eta-$sparse family with $0<\eta<1$. Then for compactly supported bounded functions $f$ and $g$ the bilinear sparse operator associated with the family $\mathcal{S}$ is defined by 
$$S(f,g)(x)=\sum_{Q\in\mathcal{S}}\langle f\rangle_Q\langle g\rangle_Q\chi_Q(x).$$

We have the following pointwise sparse domination result for the square function. 
\begin{theorem}\label{thm:critical}
	Let $\alpha >n-\frac{1}{2}$. Then for compactly supported functions $f$ and $g$ defined on $\R^n$ there exists $\nu$-sparse families $\{\mathcal S_k\}_{k=1}^{3^n}$ such that 
	$$\G^{\alpha}(f,g)(x)\lesssim \sum_{k=1}^{3^n} S_k(f,g)(x),$$
	where $S_k$ denotes bilinear sparse operator defined as above and $0<\nu<1$ is a constant depending only on $n$. \\
%	In particular, this implies that $\mathcal G^{\alpha}$ is bounded from $L^{p_1}(\R^n)\times L^{p_2}(\R^n)$ to $L^{p}(\R^n)$ for all $p_1,p_2>1$ and when either of $p_1$ or $p_2$ is $1$, we get weak-type extimates for $\mathcal G^{\alpha}$ at $(p_1,p_2,p)$. 
\end{theorem}
%We shall follow the general principle for sparse domination of bilinear Calder\'{o}n-Zygmund operators from~\cite{LN}, and the idea for square function from~\cite{CS}. Also see~\cite{BRS} for the sparse domination of Stein's square function. In order to prove Theorem \ref{thm:inter} we require the estimates on the kernel of $\mathcal G^{\alpha}$ and weak-type estimate at the end-point $(1,1,\frac{1}{2})$ when $\alpha>n-\frac{1}{2}$. \\
\begin{remark}
	The proof of Theorem~\ref{thm:critical} is based on the sparse domination principle for vector-valued Calder\'{o}n-Zygmund operators. The proof follows without much difficulty using standard arguments. For completion we give the details here. Also, note that sparse domination gives us weighted estimates for the operator. However, we do not discuss weighted estimates in this paper as the main theme of the paper is to establish unweighted estimates for $\mathcal G^{\alpha}$. We leave the details of weighted consequences of Theorem~\ref{thm:critical} to the reader, see~\cite{LN} for details. 
\end{remark}

\noindent
{\bf Weak-type estimate at $(1,1,\frac{1}{2})$ for $\alpha>n-\frac{1}{2}$:}
Note that we can view the square function as a vector-valued bilinear operator in the following way. $$\G^{\alpha}(f,g)(x)=\left\|\frac{\g_R^{\alpha}(f,g)(x)}{\sqrt{R}}\right\|_{L^2}.$$
Here the norm $\|\cdot\|_{L^2}$ is taken with respect to $R$ over the interval $(0,\infty)$. Since the kernel $\mathcal K^{\alpha}_R$ is a radial function we write (with a little abuse of notation) $\mathcal K^{\alpha}_R(x,y)=\mathcal K^{\alpha}_R(r),$ where $r=|(x,y)|, (x,y)\in \R^n\times \R^n.$ 

Let $\alpha=n-\frac{1}{2}+\delta, \delta>0$. Sunouchi~\cite{S} proved the following estimate
\begin{equation}\label{kernel}
	\left|\frac{{\mathcal K^{\alpha}_R}(r+s)}{\sqrt{R}}-\frac{{\mathcal K^{\alpha}_R}(r)}{\sqrt{R}}\right|\lesssim \min\{R^{-\frac{1}{2}-\delta}r^{-(2n+\delta)},|s|R^{\frac{1}{2}-\delta}{r^{-(2n+\delta)}}\}.
\end{equation}
Consider 
\begin{eqnarray*}
	&&\int_{0<2s<r}\left\{\int_0^\infty\left|\frac{{\mathcal K^{\alpha}_R}(r+s)}{\sqrt{R}}-\frac{{\mathcal K^{\alpha}_R}(r)}{\sqrt{R}}\right|^2dR\right\}^{\frac{1}{2}}r^{2n-1}dr \nonumber\\
	&\leq& C\int_{0<2s<r}\left\{\int_0^{1/s}\left(\frac{sR^{\frac{1}{2}-\delta}}{r^{(2n+\delta)}}\right)^{2}+\int_{1/s}^\infty\left(\frac{1}{R^{\frac{1}{2}+\delta}r^{(2n+\delta)}}\right)^2dR\right\}^{\frac{1}{2}}r^{2n-1}dr <\infty
\end{eqnarray*}
Thus, we see that the kernel of $\mathcal G^{\alpha}$ verifies the regularity condition for bilinear Calder\'{o}n-Zygmund operators, see~\cite{GT, LN} for details on bilinear Calder\'{o}n-Zygmund theory. Since we already proved (see Theorem~\ref{maintheorem:sqr}) that $\mathcal G^{\alpha}$ is bounded at $(2,2,1)$, invoking the bilinear Calder\'{o}n-Zygmund theory from~Grafakos and Torres~\cite{GT} we get that  $\G^{\alpha}$ maps $L^1(\R^n)\times L^1(\R^n)$ to $L^{\frac{1}{2},\infty}(\R^n)$.\\
\noindent
\subsection*{Proof of Theorem~\ref{thm:critical}:} Let $h$ be a measurable function defined on $\R^n$ and  $E\subseteq \R^n$ be a measurable set. Define
$$\omega(h,E)=\sup_{x\in E}h(x)-\inf_{x\in E}h(x).$$
For $0<\eta<1$ and a cube $Q$ set
$$\omega_\eta(h,Q)=\min\{\omega(h,E) : E\subset Q~\text{with}~ |E|\geq(1-\eta)|Q|\}.$$
We use the following result from Lerner and Nazarov~\cite{LN} for sparse domination of $\mathcal G^{\alpha}$.  
\begin{theorem}\label{tool}\cite{LN}
	Let $f,g,h$ be functions such that
	\begin{enumerate}
		\item for every $\epsilon>0$ it holds 
		$$|\{x\in [-N,N]^n:|h(x)|>\epsilon\}|=o(N^n) \hspace{5mm}\text{ as }N\to\infty$$
		\item For any dyadic cube Q and $0<\eta \leq 2^{-n-2}$ there exists a  $\delta > 0$ such that 
		\begin{equation*}\label{oscillation}
			\omega_\eta(h,Q)\leq C_\eta \sum_{k=0}^\infty2^{-\delta k}\left(\frac{1}{|2^{k+1}Q|}\int_{2^{k+1}Q}|f|\right)\left(\frac{1}{|2^{k+1}Q|}\int_{2^{k+1}Q}|g|\right).
		\end{equation*}
	\end{enumerate} 
	Then there exists $\nu$-sparse families $\{\mathcal S_k\}_{k=1}^{3^n}$ such that for compactly supported bounded functions $f$ and $g$ we have
	$$|h(x)|\lesssim \sum_{k=1}^{3^n} S_k(f,g)(x).$$
	Here $\nu$ is a constant depending only on $n$.
\end{theorem}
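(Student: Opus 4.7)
The plan is to combine two classical devices: a greedy Calder\'on--Zygmund-type stopping-time selection of ``bad'' cubes inside a reference dyadic lattice, and the adjacent-dyadic-systems trick of Mei/Okikiolu, which lets one replace the dilates $2^{j+1}Q$ appearing in hypothesis~(2) by dyadic cubes of comparable size drawn from one of $3^{n}$ shifted dyadic lattices $\mathscr{D}^{(1)},\dots,\mathscr{D}^{(3^{n})}$ in $\R^{n}$. The first device yields, inside each shifted lattice, one sparse family $\mathcal S_{k}$; the second accounts for the constant $3^{n}$ in the conclusion. Hypothesis~(1) serves only for a preliminary reduction: because $|h|$ is essentially $\leq \epsilon$ outside a sufficiently large cube $Q_{N}$, it is enough to dominate $h \chi_{Q_{N}}$ and to send $N \to \infty$; the limiting sparse family is then extracted by a standard diagonal argument.

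\textbf{Stopping-time construction.} Fix a lattice $\mathscr{D}^{(k)}$ and a starting cube $Q^{0}\in\mathscr{D}^{(k)}$ containing the relevant mass. Put $Q^{0}\in\mathcal S_{k}$. Given a cube $P\in\mathcal S_{k}$, define
\[
A(P) := \sum_{j\geq 0} 2^{-\delta j}\,\langle f\rangle_{2^{j+1}P}\,\langle g\rangle_{2^{j+1}P},
\]
and declare $Q\subsetneq P$, $Q\in\mathscr{D}^{(k)}$, a \emph{child} of $P$ if $Q$ is maximal with the property that either $A(Q)>2 A(P)$ or the measure of $\{x\in Q:|h(x)|>2A(P)\}$ exceeds a small fraction of $|Q|$. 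Add all children to $\mathcal S_{k}$ and iterate. The weak-type $L^{1}\times L^{1}\to L^{1/2,\infty}$ bound for the bilinear maximal function built from $A(\cdot)$ combined with a Chebyshev-type estimate driven by the factor $2$ gives that the union of the children of $P$ occupies at most $\eta|P|$ with $\eta\leq 2^{-n-2}$; hence the remainders $E_{P}:=P\setminus\bigcup\{\text{children of }P\}$ satisfy $|E_{P}|\geq (1-\eta)|P|$ and are pairwise disjoint, establishing $\eta$-sparseness.

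\textbf{From oscillation to pointwise domination.} Fix $x$ and list the cubes of $\mathcal S_{k}$ containing $x$ as $\cdots\subset P_{2}\subset P_{1}\subset P_{0}$. Hypothesis~(2) supplies, for each $j$, a measurable set $E'_{P_{j}}\subset P_{j}$ with $|E'_{P_{j}}|\geq(1-\eta)|P_{j}|$ and $\omega(h,E'_{P_{j}})\lesssim A(P_{j})$. Because $\eta\leq 2^{-n-2}$ is so small, the set $E_{P_{j}}\cap E'_{P_{j}}$ is still nearly full in $P_{j}$, and by a Lebesgue-differentiation argument one can find a point common to (the relevant parts of) all these sets for almost every $x$. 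A telescoping across the tower $P_{0}\supset P_{1}\supset\cdots$ then gives
\[
|h(x)|\ \lesssim\ \sum_{j\geq 0}\omega(h,E'_{P_{j}})\ \lesssim\ \sum_{j\geq 0} A(P_{j}).
\]
Expanding $A(P_{j})$ and swapping the order of summation, every term $\langle f\rangle_{2^{i+1}P_{j}}\langle g\rangle_{2^{i+1}P_{j}}$ is realized as $\langle f\rangle_{R}\langle g\rangle_{R}\chi_{R}(x)$ for some dyadic ancestor $R$ of $P_{j}$ in one of the $3^{n}$ adjacent lattices; this is precisely the Mei/Okikiolu step. Absorbing the geometric $2^{-\delta i}$ factors, the right-hand side is bounded by $\sum_{k=1}^{3^{n}} S_{k}(f,g)(x)$, as required.

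\textbf{Main obstacle.} The delicate point is harmonizing the three small parameters: the sparseness threshold $\eta\leq 2^{-n-2}$, the measure of the ``good'' subset $E'_{P}$ from hypothesis~(2), and the stopping constant $2$ used in defining children. They must be chosen so that (a) the stopping-time selection is genuinely sparse, (b) for almost every $x$ the nested intersection $\bigcap_{j\leq J}(E_{P_{j}}\cap E'_{P_{j}})$ remains nonempty at every depth $J$, and (c) the oscillation telescoping produces exactly the sum $\sum_{j} A(P_{j})$ without extra logarithmic losses. Once these three constants are balanced, the rest of the argument is bookkeeping, and the reduction via hypothesis~(1) ensures that no boundary/tail term escapes control.
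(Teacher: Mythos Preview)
The paper does not prove this theorem at all; it is quoted as a known result from Lerner and Nazarov \cite{LN} and invoked as a black box in the proof of Theorem~\ref{thm:critical}. So there is no ``paper's own proof'' to compare against, and your sketch is essentially a reconstruction of the Lerner--Nazarov local mean oscillation formula adapted to the bilinear setting.

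Your outline is broadly faithful to that argument, but two points would need tightening if you were to write it out in full. First, the telescoping in \cite{LN} is organized around medians $m_{h}(Q)$ rather than around ``a point common to all the $E_{P_j}\cap E'_{P_j}$'': one shows $|h(x)-m_{h}(Q_{0})|\lesssim\sum_{j}\omega_{\eta}(h,P_{j})$ for a.e.\ $x$, and hypothesis~(1) is used exactly to guarantee $m_{h}(Q_{0})\to 0$ as the starting cube exhausts $\R^{n}$. Your nested-intersection device is more delicate and is not how the argument is actually run. Second, the passage from the dilates $2^{i+1}P_{j}$ to cubes in the $3^{n}$ shifted lattices while preserving sparseness is the genuine technical content; in \cite{LN} this is handled by first proving a pointwise formula with the single sparse family $\{P_{j}\}$ and the quantities $A(P_{j})$, and only then invoking the three-lattice lemma to replace each $2^{i+1}P_{j}$ by a containing dyadic cube of comparable size, with the $2^{-\delta i}$ weights ensuring the resulting families in each lattice are Carleson (hence sparse). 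Your term-by-term reassignment description is correct in spirit but glosses over why the image families stay sparse. These are refinements rather than fatal gaps.
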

We will show that the result above is application to the square function under consideration. 

Note that the condition $(1)$ of Theorem~\ref{tool} holds for $h=\G^{\alpha}(f,g)$ as it is weak-type at $(1,1,\frac{1}{2}).$ Therefore, we need to verify the second condition. 
For convenience write $T_R(f,g)(x)=\frac{\g_R^{\alpha}(f,g)(x)}{\sqrt{R}}$. Let $Q$  be a dyadic cube and set $\mathcal{Q}_{k}=(2^{k+1}Q)^2\setminus(2^{k}Q)^2,$ for $k\in \N$. Let $0<\lambda\leq2^{-n-2}$ and points $x,x'\in Q$. Consider 
\begin{eqnarray*}
	&&\left|\|T_R(f,g)(x)\|_{L^2}-\|T_R(f,g)(x')\|_{L^2}\right| \leq \|T_R(f,g)(x)-T_R(f,g)(x')\|_{L^2} \nonumber\\
	&&=\left\|T_R((f,g)\chi_{(2^{k_n}Q)^2})(x)-T_R((f,g)\chi_{(2^{k_n}Q)^2})(x')
	+\sum_{k\geq k_n}[T_R((f,g)\chi_{\mathcal{Q}_{k}})(x)-T_R((f,g)\chi_{\mathcal{Q}_{k}})(x')]\right\|_{L^2} \nonumber\\
	&&\leq I_1+I_2 \nonumber
\end{eqnarray*}
where 
$$I_1=\|T_R((f,g)\chi_{(2^{k_n}Q)^2})(x)\|_{L^2}+\|T_R((f,g)\chi_{(2^{k_n}Q)^2})(x')\|_{L^2}$$
and 
$$I_2=\sum_{k\geq k_n}\int_{\mathcal{Q}_{k}}\|\K^{\alpha}_R\left(|(x,x)-(y_1,y_2)|\right)-\K^{\alpha}_R\left(|(x',x')-(y_1,y_2|\right)\|_{L^2}|f(y_1)||g(y_2)|dy_1dy_2.$$
Note that $k_n\in \N$ is a dimensional constant which will be chosen suitably at a later stage.  

Let us first estimate the term $I_2$. Set $r=|(x',x')-(y_1,y_2)|$, since $x,x'\in Q$ and $(y_1,y_2)\in \mathcal{Q}_{k}$, we have that 
$|(x,x)-(y_1,y_2)|=r+s$ where $s\in(-\sqrt{2}|x-x'|,\sqrt{2}|x-x'|)$. Therefore,
$$\|\K^{\alpha}_R\left(|(x,x)-(y_1,y_2)|\right)-\K^{\alpha}_R\left(|(x',x')-(y_1,y_2|\right)\|^2_{L^2}=\|\K^{\alpha}_R\left(r+s\right)-\K^{\alpha}_R\left(r\right)\|^2_{L^2}.$$
We can estimate this quantity using the bounds for $\K^{\alpha}_R$ in ~\eqref{kernel}. Note that in order to use ~\eqref{kernel} we need to make sure that $r>2s$ which is possible since $x,x'\in Q$ and $(y_1,y_2)\in \mathcal{Q}_{k}$ with a choice of $k\geq k_n$. Consider 
\begin{eqnarray*}
	\left\|\K^{\alpha}_R(r+s)-\K^{\alpha}_R(r)\right\|^2_{L^2}
	&=&\int_0^{|s|^{-1}}|\K^{\alpha}_R(r+s)-\K^{\alpha}_R(r)|^2dR+\int_{|s|^{-1}}^{\infty}|\K^{\alpha}_R(r+s)-\K^{\alpha}_R(r)|^2dR\\
	&=&\int_0^{|s|^{-1}}|s|^2R^{1-2\delta}r^{-(4n+2\delta)}dR+\int_{|s|^{-1}}^{\infty}R^{-(1+2\delta)}r^{-(4n+2\delta)}dR\\
	&\approx & \frac{|s|^{2\delta}}{r^{4n+2\delta}}
\end{eqnarray*}
Let $l(Q)$ denote the sidelength of the cube $Q$. Since   $r=|(x',x')-(y_1,y_2)|\approx 2^kl(Q)$ and $s\leq\sqrt{2}|x-x'|\lesssim l(Q)$ we get that 
$$\frac{|s|^{2\delta}}{r^{4n+2\delta}}\lesssim \frac{l(Q)^{2\delta}}{2^{2k(2n+\delta)}l(Q)^{4n+2\delta}}=\left(\frac{1}{2^{k(2n+\delta)}|Q|^{2}}\right)^2.$$
Therefore, we have 
\begin{eqnarray}
	I_2&\lesssim& \sum_{k\geq k_n}\int_{(2^{k+1}Q)^2\setminus(2^{k}Q)^2}\frac{1}{2^{k(2n+\delta)}|Q|^{2}} |f(y_1)||g(y_2)|dy_1dy_2 \nonumber \\
	&\lesssim& \sum_{k\geq k_n} 2^{-k\delta} \left(\frac{1}{|2^{k+1}Q|}\int_{2^{k+1}Q}|f(y)|dy\right)\left(\frac{1}{|2^{k+1}Q|}\int_{2^{k+1}Q}|g(y)|dy\right) \nonumber
\end{eqnarray}

Next, we estimate the quantity $I_1$. This follows from the weak-type boundedness of $\G^{\alpha}$ at $(1,1,\frac{1}{2})$. For $\beta>0$ consider the set  
$$E^*=\{z\in Q:\|T_R((f,g)\chi_{(2^{k_n}Q)^2})(x)\|_{L^2}>\beta\}.$$
Using the weak-type boundedness of $\G^{\alpha}$ at $(1,1,\frac{1}{2})$ we get 
$$|E^*| \leq \left[\frac{\Vert G^{\alpha}\Vert_{L^{1}\times L^{1}\rightarrow L^{\frac{1}{2},\infty}}}{\beta}\left(\int_{2^{k_n}Q}|f(y)|dy\right)\left(\int_{2^{k_n}Q}|g(y)|dy\right)\right]^{\frac{1}{2}}$$
We can choose $\beta=\Vert G^{\alpha}\Vert_{L^{1}\times L^{1}\rightarrow L^{\frac{1}{2},\infty}}2^{2nk_{n}}\lambda^{-2}\left(\frac{1}{|2^{k_n}Q|}\int_{2^{k_n}Q}|f(y)|dy\right)\left(\frac{1}{|2^{k_n}Q|}\int_{2^{k_n}Q}|g(y)|dy\right)$. This implies that $|E^*|\leq\lambda|Q|$. 
Take $E=Q\setminus E^*$ and observe that for $x\in E$ we have 
$$\|T_R((f,g)\chi_{(2^{k_n}Q)^2})(x)\|_{L^2(0,\infty)}\lesssim_\lambda \left(\frac{1}{|2^{k_n}Q|}\int_{2^{k_n}Q}|f(y)|dy\right)\left(\frac{1}{|2^{k_n}Q|}\int_{2^{k_n}Q}|g(y)|dy\right).$$
Moreover, we have that 
$$|E|\geq|Q|-|E^*|\geq(1-\lambda)|Q|.$$
Putting these estimates together we get that with our choice of $E$ for every $x,x'\in E$ the following holds.  
\begin{eqnarray*}
	&&\left|\|T_R(f,g)(x)\|_{L^2}-\|T_R(f,g)(x')\|_{L^2}\right| \\
	&\lesssim& C_\lambda \sum_{k=0}^\infty 2^{-k\delta} \left(\frac{1}{|2^{k+1}Q|}\int_{2^{k+1}Q}|f(y)|dy\right)\left(\frac{1}{|2^{k+1}Q|}\int_{2^{k+1}Q}|g(y)|dy\right) \nonumber
\end{eqnarray*}
This proves condition $(2)$ of Theorem~\ref{tool} and hence the proof of Theorem~\ref{thm:critical} is done.  
%%%%%%%%%%%%%%%%%5
%%%%%%%%%%%%%%%%%%%%%%%%%%%%%

\end{document}